\title{\textbf{Relative topological entropy for actions of non-discrete groups on compact spaces in the context of cut and project schemes} } \author{T.~Hauser}
\let\epsilon=\varepsilon
\theoremstyle{definition}
\newtheorem{definition}{Definition}[section]
\newtheorem{theorem}[definition]{Theorem}
\newtheorem{proposition}[definition]{Proposition}
\newtheorem{lemma}[definition]{Lemma}
\newtheorem{corollary}[definition]{Corollary}
\newtheorem*{acknowledgement}{Acknowledgement}
\theoremstyle{remark}
\newtheorem{remark}[definition]{Remark}
\newtheorem{example}[definition]{Example}
\let\epsilon=\varepsilon
\let\phi=\varphi
\let\theta=\vartheta
\begin{document}
\maketitle

%\textbf{TO DO:}
%\begin{itemize}
%	%\item Link to other paper? -< dense sets
%	%\item Density and expansiveness?
%	%\item remark why it is important to use compact methods!
%	\item JLO reference
%\end{itemize}

\begin{abstract}

	In the study of aperiodic order via dynamical methods, topological entropy is an important concept. In this paper, parts of the theory, like Bowen's formula for fibre wise entropy or the independence of the definition from the choice of a Van Hove sequence, are extended to actions of several non-discrete groups. To establish these results, we will show that the Ornstein-Weiss lemma is valid for all considered groups which appear in the study of cut and project schemes. 
	
%	%The literature lacks an Ornstein-Weiss lemma and a systematic treatment of topological entropy of actions of non-discrete amenable unimodular groups, like for example $\mathbb{R}^d$. 
%	We prove an Ornstein-Weiss lemma for amenable unimodular groups containing a uniform lattice and show that averages along Van Hove nets can be obtained by averaging inside the lattice. We use this result to introduce relative topological entropy for actions of amenable unimodular groups that contain a uniform lattice and show that Bowens formula for relative topological entropy is satisfied.
\end{abstract}

\textbf{Mathematics Subject Classification (2000):} 37B40, 37A35, 52C23.

\textbf{Keywords:} Entropy, Ornstein-Weiss lemma,  Bowen formula, Uniform space, Dynamical system, Cut and project scheme,  Amenable group, Van Hove sequence, F\o lner sequence, Uniform lattice.%, Bowen action. 

\section{Introduction}

%L. Bowen \cite{bowen2014entropy}
%Introduction: \cite{bowen2017brief}

Aperiodic order, an intermediate concept between order and disorder, has attracted a lot of attention over the last three decades in the fields of physics, geometry, number theory and harmonic analysis \cite{shechtman1984metallic,axel1995beyond,patera1998quasicrystals,	baake2000directions,baake2004dynamical,baake2007pure,baake2015ergodic,baake2016spectral}. The construction of aperiodic point sets via cut and project schemes was pioneered by Yves Meyer in his famous monograph on ''Algebraic numbers and harmonic analysis''. For details see \cite[Chapter II.5]{meyer1972algebraic}. 
	A \emph{cut and project scheme (CPS)} is a triple $(G,H,\Lambda)$, where $G$ and $H$ are locally compact amenable groups\footnote{%Let $G$ be a topological group. %$G$ is called \emph{$\sigma$-compact}, whenever there is a countable cover of $G$ by compact sets. Furthermore 
	 A topological group $G$ is called \emph{locally compact}, whenever every neighbourhood of some element contains a compact neighbourhood of this element. For the notion of amenability see Subsection \ref{sec:Amenable+VanHove}.}, 
$\Lambda$ is a uniform lattice\footnote{A discrete subgroup $\Lambda$ of a locally compact group $G$ is called a \emph{uniform lattice}, whenever it is \emph{co-compact}, i.e.\ whenever $G\big/\Lambda$ is compact.}
 in $G\times H$ and the projections $\pi_G$ and $\pi_H$ satisfy the following properties. The restriction $\pi_G\big|_\Lambda$ is injective and $\pi_H(\Lambda)$ is dense in $H$.  
 Then $G$ is called the \emph{physical space} and $H$ is referred to as the \emph{internal space} of $(G,H,\Lambda)$. % Note that the $\sigma$-compactness of $G$ and $H$ imply $\Lambda$ to be countable. 
	Given a relatively compact subset $W\subseteq H$ with nonempty interior, usually called a \emph{window} in this context, 
such a CPS produces a subset of G via $\omega:=\pi_G(\Lambda\cap(G\times W))$. Subsets of $G$ that arise by this construction are called \emph{model sets}.
These sets are aperiodic, but have a longe range order, due to their algebraic origin. 
For further details and references on these notions see \cite[Chapter 7]{baake2013aperiodic}. CPS are usually studied under the assumption of commutativity. 	
	Nevertheless recent interest in the non commutative case, for example in \cite{bjorklund2018aperiodic} motivated us to omit the assumption of commutativity in our definition of CPS. 
	
	Model sets can be studied by methods of dynamical systems. One first introduces a compact Hausdorff topology on the set of all closed subsets of $G$ and shows that the set of all translations $\{\omega g;\, g\in G\}$ is a pre-compact subset. Denote by $X$ the closure of $\{\omega g;\, g\in G\}$. One then shows that $G\times X\ni (g,M)\mapsto M g:=\{mg;\, m\in M\}$ is a dynamical system, referred to as the \emph{Delone dynamical system} of $\omega$. For details on this construction see \cite{baake2004dynamical} in combination with \cite{baake2013aperiodic}. 
	In the study of model sets it is natural to study the topological entropy of this dynamical system as a measure of ''complexity'' of $\omega$ \cite{baake2007pure,baake2015ergodic}. 
	
	 It is thus natural to ask for the validity of analogues of statements from the theory of entropy of $\mathbb{Z}$-actions.
	In \cite{OrnsteinandWeiss,HuangandYeandZhang, Yan, ZhengandEChen, zhou2016tail} such analogues are proven for actions of countable discrete amenable groups $G$. 		 	
	 Nevertheless in the context of Delone dynamical systems $G$ is typically not discrete and we will see in Example \ref{exa:CPS} that there can be choices of $G$ that do not contain uniform lattices. In fact the absence of the possibility to restrict to discrete subgroups was one of the motivations of Meyer to construct CPS \cite[Chapter II]{meyer1972algebraic}.
	  The references known to us for a systematic treatment of entropy theory of actions of non discrete groups are \cite{Tagi-Zade,schneider2015topological}. Both notions are not equivalent for actions of $\mathbb{R}^d$ with $d\geq 2$ and we will focus on the notion of Tagi-Zade, which is used in the context of aperiodic order \cite{baake2007pure}. 
	  Tagi-Zade presents entropy theory of $\mathbb{R}^d$ actions, but parts like fibre wise entropy, relative topological entropy, the freedom in the averaging in the definition of entropy are not addressed. Furthermore \emph{Bowen's formula}, which states that the topological entropy of an action is less than the sum of the topological entropy of a factor and the relative topological entropy of the factor map \cite{bowen1971entropy}, are not considered.  
	 These parts of the theory are addressed in the study of aperiodic order in \cite{baake2007pure,HuckandRichard,JaegerLenzandOertel} and the importance of Bowen's formula for $\mathbb{R}^d$ actions comes up in \cite[Remark 2.9.]{fuhrmann2018irregular} and \cite[Lemma 4.1.(ii)]{fuhrmann2018irregular}. 
	 
	In order to define topological entropy of discrete amenable groups one uses a technique referred to as ''Ornstein-Weiss lemma''  \cite{OrnsteinandWeiss,ward1992abramov,lindenstrauss2000mean,Yan}. In order to explain this in more detail we need the following notions. 
 %For the notion of amenable groups or van-Hove nets, see subsection \ref{sec:Amenable+VanHove} below. 
	Let $G$ be an amenable group $G$. 
	Denote by $\mathcal{K}(G)$ the set of all compact subsets of $G$.
	A function $f\colon \mathcal{K}(G)\to \mathbb{R}$ is called \emph{subadditive}, if for all disjoint $A,B\in \mathcal{K}(G)$ there holds
	$f(A\cup B)\leq f(A)+f(B).$
Furthermore $f$ is said to be \emph{right invariant}, if for all $A\in \mathcal{K}(G)$ and for all $g\in G$ there holds
		$f(Ag)= f(A).$
A function  $f$ is  called \emph{monotone}, if for all $A,B\in \mathcal{K}(G)$ with $A\subseteq B$ there holds
		$f(A)\leq f(B).$ 
	$G$ is said to \emph{satisfy the Ornstein-Weiss lemma}, if for any subadditive, right invariant and monotone function $f\colon \mathcal{K}(G)\to \mathbb{R}$ the limit 
	\begin{align}\label{equ:OWlimit}
		\lim_{i\in I} \frac{f(A_i)}{\mu(A_i)}
	\end{align}
	exists, is finite and does not depend on the choice of the Van Hove net $(A_i)_{i\in I}$ in $G$ .  
	
	The ideas behind this technique go back to \cite{OrnsteinandWeiss,ward1992abramov,HuangandYeandZhang,gromov1999topological,lindenstrauss2000mean}. In M. Gromovs extremly influential work \cite{gromov1999topological} a sketch of a proof is presented for very general groups, but a rigorous proof is given only in the context of discrete amenable groups. For details see \cite{Krieger,Yan,FeketesLemma} %Note that the arguments also generalize to the context of amenable discrete semigroups, as presented in \cite{FeketesLemma}. 
	% The problem with the non discrete version seems to be hidden at the end of the sketch in \cite{gromov1999topological}. For details see 
	 and Remark \ref{rem:Ornsteinweissgroups}(ii). We thus give a rigorous proof of the Ornstein-Weiss lemma for groups arising in our context. The next theorem shows that all compactly generated abelian groups satisfy the Ornstein-Weiss lemma.

	 \begin{theorem}\label{theint:OrnsteinWeisslemma}%\label{rem:restrictionof_h_tolattice_OW_Formula}
		Every amenable group containing a uniform lattice satisfies the Ornstein-Weiss lemma.
	\end{theorem}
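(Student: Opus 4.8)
The plan is to reduce the statement to the Ornstein--Weiss lemma for the \emph{discrete} amenable group $\Lambda$, which is available from the cited literature on discrete amenable groups. Recall first that a closed subgroup of an amenable group is amenable, so $\Lambda$ is amenable; being a uniform lattice it is discrete, and since the existence of a uniform lattice forces $G$ to be unimodular, we may fix a bi-invariant Haar measure $\mu$ on $G$. I would begin by choosing a relatively compact Borel fundamental domain $F$ for $\Lambda$ in $G$ whose boundary is $\mu$-null; writing $K:=\overline{F}$, the translates $\{K\lambda:\lambda\in\Lambda\}$ then cover $G$ and meet pairwise only in the $\mu$-null set $\partial F\cdot\Lambda$, and $c:=\mu(F)\in(0,\infty)$.

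Next I would discretise a given Van Hove net $(A_i)_{i\in I}$. For $A\in\mathcal{K}(G)$ set $S^-(A):=\{\lambda\in\Lambda: K\lambda\subseteq A\}$ and $S^+(A):=\{\lambda\in\Lambda: K\lambda\cap A\neq\emptyset\}$, so that $K\,S^-(A)\subseteq A\subseteq K\,S^+(A)$ and, by monotonicity of $f$, \[ f\bigl(K\,S^-(A_i)\bigr)\ \le\ f(A_i)\ \le\ f\bigl(K\,S^+(A_i)\bigr). \] The lattice points in $S^+(A)\setminus S^-(A)$ index tiles meeting the boundary of $A$, so the Van Hove property translates into the statements that $S^-(A_i)$ and $S^+(A_i)$ are F\o lner sequences in $\Lambda$, that $|S^+(A_i)\setminus S^-(A_i)|\big/|S^\pm(A_i)|\to 0$, and that $\mu(A_i)\big/\bigl(c\,|S^\pm(A_i)|\bigr)\to 1$ (for either sign, the two being asymptotically equal); these are routine boundary estimates once the null-overlap property of $K$ is in hand.

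The heart of the argument is to show that the induced quantity $\bar f(S):=f(KS)$, defined on finite subsets $S\subseteq\Lambda$, gives rise to a genuine Ornstein--Weiss limit $\bar L:=\lim_n \bar f(S_n)/|S_n|$ along F\o lner sequences, independent of the sequence. The functional $\bar f$ is clearly monotone and right-$\Lambda$-invariant, using right-invariance of $f$ and $KS\lambda_0=(KS)\lambda_0$. \emph{The main obstacle is subadditivity}: for disjoint $S,T\subseteq\Lambda$ the compact sets $KS$ and $KT$ are only \emph{almost} disjoint, overlapping in the $\mu$-null boundary $KS\cap KT$, and because connected pieces cannot be split into disjoint compacta the disjoint-set hypothesis on $f$ does not apply verbatim. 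I would overcome this by combining the Ornstein--Weiss quasi-tiling in the discrete group $\Lambda$ with the null-overlap property: quasi-tiling a large F\o lner set by disjoint $\Lambda$-translates of a block $T$ and passing to the corresponding blocks $KT\gamma=(KT)\gamma$ in $G$, right-invariance makes all blocks $f$-equivalent while their mutual overlaps remain $\mu$-null; monotonicity together with the Van Hove boundary estimates then shows that these null overlaps contribute negligibly in the F\o lner limit, so that $\bar f$ obeys the subadditive inequality up to an $o(|S|)$ error. This is exactly the error tolerated by the Ornstein--Weiss machinery, and it yields the desired net-independent limit $\bar L$ for $\Lambda$.

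Finally I would assemble the pieces. Dividing the sandwich by $\mu(A_i)$ and inserting $\mu(A_i)=c\,|S^\pm(A_i)|\,(1+o(1))$ together with $\bar f(S^\pm(A_i))/|S^\pm(A_i)|\to\bar L$, the two sides agreeing in the limit because $|S^+(A_i)\setminus S^-(A_i)|=o(|S^\pm(A_i)|)$, gives \[ \lim_{i\in I}\frac{f(A_i)}{\mu(A_i)}=\frac{\bar L}{c}. \] Since $\bar L$ is finite and independent of the chosen F\o lner sequence in $\Lambda$, and since every Van Hove net in $G$ produces such F\o lner sequences $S^\pm(A_i)$, the limit exists, is finite, and does not depend on the Van Hove net, which is precisely the Ornstein--Weiss lemma for $G$.
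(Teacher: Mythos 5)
You set up the reduction exactly as the paper does --- your $S^{\pm}(A_i)$ are the paper's $\check F_i,\hat F_i$ from Lemma \ref{lem:constructingFhatandFcheck}, and your final sandwich-and-divide argument is that of Theorem \ref{the:OrnsteinWeisslemma} --- but the step you yourself single out as ``the heart of the argument'' fails. To make $\bar f(S)=f(KS)$ subadditive you argue that the overlaps between adjacent tiles, being $\mu$-null, ``contribute negligibly in the F\o lner limit''. Nothing in the hypotheses ties $f$ to the Haar measure: $f$ is only monotone, right invariant and subadditive, and it may be large on $\mu$-null compact sets. This is not a hypothetical pathology but the typical situation for the functions the lemma is designed for: for $A\mapsto\log(\operatorname{cov}_X(\eta_A))$ the value on arbitrarily small compact sets (even on singletons, since $\phi^g$ is a homeomorphism) is bounded below by the positive constant $\log(\operatorname{cov}_X(\eta))$; this is exactly the unboundedness of $f/\mu$ recorded in Remark \ref{rem:boundednessnotgiven}(iv), and the reason the paper cannot simply quote \cite{pogorzelski2016banach}. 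Monotonicity bounds $f$ of a set only from above by $f$ of a \emph{larger} set, so it yields no inequality of the form $f(KS\cup KT)\le f(KS)+f(KT)+o(|S\cup T|)$, and the disjointness hypothesis can never be brought to bear on the tiles themselves (as you note, a connected compact set admits no splitting into disjoint nonempty compacta). Hence your ``approximate subadditivity'' is unproved, and the subsequent appeal to a quasi-tiling Ornstein--Weiss argument that ``tolerates'' this error amounts to re-proving a modified discrete lemma rather than using it as a black box --- which was the whole point of the reduction.

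The paper's resolution needs no measure estimate at all: with $f^\Lambda(F):=f(\overline C F)$ one has $\overline C(F\cup F')=\overline C F\cup\overline C F'$, so subadditivity of $f^\Lambda$ follows by applying the subadditivity of $f$ to the two \emph{possibly overlapping} compacta $\overline C F$ and $\overline C F'$; the hypothesis on $f$ is thus invoked for not-necessarily-disjoint pairs, which is what the motivating entropy functionals satisfy, and the discrete Ornstein--Weiss lemma of \cite{Krieger,FeketesLemma} then applies to $f^\Lambda$ directly. (Your instinct that disjointness is a genuine subtlety is sound --- under a literal disjoint-only reading of subadditivity this step needs the stronger reading of the hypothesis --- but the repair is to use subadditivity for arbitrary pairs of compacta, not to trade it for measure-theoretic error terms, which cannot work for the intended class of functions.) Two smaller points: the fundamental domain with $\mu$-null boundary that you posit is unnecessary (the paper gets by with the pre-compact Borel domain of Remark \ref{remark:measures of Lambda and G}(i)), and its existence in an arbitrary locally compact group is not clear; and the facts you dismiss as ``routine boundary estimates'' --- that $S^{\pm}(A_i)$ are F\o lner in $\Lambda$ and that $|S^{+}(A_i)|/|S^{-}(A_i)|\to 1$ --- constitute the actual technical work of the paper's proof, namely all of Lemma \ref{lem:constructingFhatandFcheck}.
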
 

	If $(G,H,\Lambda)$ is a CPS we know that $G\times H$ contains a uniform lattice and in order to show that $G$ satisfies the Ornstein-Weiss lemma one could hope that the properties of a CPS also imply that $G$ contains a uniform lattice. However in Example \ref{exa:CPS}, which is a special case of examples, studied by Meyer in \cite[Chapter II.10]{meyer1972algebraic}, we present a CPS with a physical space that does not contain any uniform lattice. %In fact this physical space is the set of $p$-adic numbers and the lack of uniform lattices in this set was one of the reasons to introduce 	
	Nevertheless we obtain that physical spaces of CPS satisfy the Ornstein-Weiss lemma from the next result.   

\begin{theorem}\label{theint:ProductandOW}
	If $G$ and $H$ are amenable groups such that $G\times H$ satisfies the Ornstein-Weiss lemma, then $G$ and $H$ satisfy the Ornstein-Weiss lemma.
\end{theorem}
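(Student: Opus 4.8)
The plan is to \emph{lift} a given admissible function on $\mathcal K(G)$ to one on $\mathcal K(G\times H)$ by slicing, and then to apply the Ornstein-Weiss lemma for $G\times H$ along product Van Hove nets. By symmetry it suffices to treat $G$. So let $f\colon\mathcal K(G)\to\R$ be subadditive, right invariant and monotone, and fix right Haar measures $\mu_G,\mu_H$ with $\mu_{G\times H}=\mu_G\otimes\mu_H$. For a compact set $C\subseteq G\times H$ and $y\in H$ write $C_y:=\{g\in G:(g,y)\in C\}$ for the corresponding slice; since $C$ is compact each $C_y$ is a (possibly empty) compact subset of $G$, and $C_y\neq\emptyset$ exactly when $y$ lies in the projection $\pi_H(C)$. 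Observe that $f(\emptyset)=f(\emptyset\cup\emptyset)\leq 2f(\emptyset)$ and monotonicity give $f\geq f(\emptyset)\geq 0$. Let $\bar f$ agree with $f$ on nonempty compacta and put $\bar f(\emptyset):=0$, so that $\bar f\geq 0$ as well.

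First I would define the lifted function by
\begin{align*}
	\tilde f(C):=\int^{*}_{H}\bar f(C_y)\,d\mu_H(y),
\end{align*}
where $\int^{*}$ denotes the \emph{upper} integral. Passing to the upper integral is the device that lets me avoid any measurability discussion for the a priori arbitrary map $y\mapsto\bar f(C_y)$: the upper integral is monotone, subadditive, and invariant under measure preserving homeomorphisms, which is all that is used. Finiteness is immediate, since $\bar f(C_y)$ vanishes off the compact set $\pi_H(C)$ and is bounded there by $f(\pi_G(C))$, whence $0\leq\tilde f(C)\leq f(\pi_G(C))\,\mu_H(\pi_H(C))<\infty$. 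Monotonicity of $\tilde f$ follows from the pointwise bound $\bar f(C_y)\leq\bar f(D_y)$ for $C\subseteq D$ and monotonicity of $\int^{*}$. For subadditivity, if $C,D$ are disjoint then so are the slices $C_y,D_y$ for each $y$, and a short case distinction (one or both slices empty, or both nonempty and disjoint) yields $\bar f((C\cup D)_y)\leq\bar f(C_y)+\bar f(D_y)$ pointwise on all of $H$; subadditivity of $\int^{*}$ over the fixed domain $H$ then gives $\tilde f(C\cup D)\leq\tilde f(C)+\tilde f(D)$. Right invariance reduces, via the identity $(C(g,h))_y=C_{yh^{-1}}g$, to right invariance of $f$ on the slices together with right invariance of $\mu_H$. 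The deliberate normalisation $\bar f(\emptyset)=0$ is precisely what keeps $\tilde f$ finite and makes the pointwise subadditivity estimate and right invariance hold irrespective of the value of $f(\emptyset)$.

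Finally I would transport the conclusion back to $G$. Fix any Van Hove net $(L_k)_{k\in K}$ in $H$ and let $(A_j)_{j\in J}$ be an arbitrary Van Hove net in $G$. The products form a Van Hove net $(A_j\times L_k)_{(j,k)\in J\times K}$ in $G\times H$, using that a product of Van Hove nets is again a Van Hove net and that $\mu_{G\times H}(A_j\times L_k)=\mu_G(A_j)\,\mu_H(L_k)$. For such a product set the slice function is the simple function $f(A_j)\,\mathbf 1_{L_k}$, so $\tilde f(A_j\times L_k)=f(A_j)\,\mu_H(L_k)$ and hence
\begin{align*}
	\frac{\tilde f(A_j\times L_k)}{\mu_{G\times H}(A_j\times L_k)}=\frac{f(A_j)}{\mu_G(A_j)}.
\end{align*}
By the Ornstein-Weiss lemma for $G\times H$ applied to $\tilde f$, the left hand side converges along the product net to a finite limit $c$ that is independent of the Van Hove net. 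Since the displayed ratio does not depend on $k$, convergence over $J\times K$ forces $f(A_j)/\mu_G(A_j)\to c$ over $J$; and as $c$ is the net independent limit of $\tilde f$, the same value $c$ results for every Van Hove net $(A_j)$ in $G$. Thus $G$ satisfies the Ornstein-Weiss lemma, and the argument with the roles of $G$ and $H$ interchanged handles $H$.

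I expect the main obstacle to be the construction of a \emph{bona fide} admissible lift: the map $y\mapsto\bar f(C_y)$ need not be measurable for a general monotone $f$, and naively integrating $f(C_y)$ over $H$ would diverge whenever $f(\emptyset)>0$. Both difficulties are dissolved at once by taking the upper integral of the normalised integrand $\bar f$, after which subadditivity, right invariance and monotonicity of $\tilde f$ are formal consequences of the corresponding properties of $\int^{*}$. The only genuinely external input is that products of Van Hove nets are again Van Hove nets.
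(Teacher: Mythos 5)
Your proof is correct, and its skeleton is the same as the paper's: lift $f$ to a monotone, right invariant, subadditive function on $\mathcal{K}(G\times H)$ whose value on product sets is $f(A)\mu_H(B)$, apply the Ornstein-Weiss lemma for $G\times H$ along product Van Hove nets (the paper proves stability of Van Hove nets under products as a separate lemma, so treating this as an external input is legitimate), and project the limit back to $G$ exactly as you do. The difference lies in the construction of the lift. The paper takes a covering infimum, $h(Q):=\inf\{\sum_{n=1}^N f(C_n)\mu_H(D_n):\, Q\subseteq\bigcup_{n=1}^N C_n\times D_n\}$, for which monotonicity, right invariance and subadditivity are immediate, but the identity $h(A\times B)=f(A)\mu_H(B)$ then costs a Borel partition argument for the inequality $h(A\times B)\geq f(A)\mu_H(B)$. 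You instead take an upper integral over slices, for which the product identity is immediate, while the three admissibility properties must be checked slice-wise, with the upper integral serving precisely to bypass measurability of $y\mapsto f(C_y)$. The trade is roughly even, with one genuine advantage on your side: disjoint sets in $G\times H$ have disjoint slices, so your lift uses only the paper's stated hypothesis (subadditivity for \emph{disjoint} pairs), whereas the paper's partition argument invokes $f(A)\leq\sum_{n\in\mathcal{N}_m}f(C_n)$ for a possibly overlapping compact cover of $A$, which does not formally follow from disjoint subadditivity plus monotonicity. One small caveat on your side: the bound $f\geq 0$, which you need so that monotonicity of $\tilde f$ survives the case of an empty slice contained in a nonempty one, is derived by evaluating $f$ at $\emptyset$; this is consistent with the introduction's definition of $\mathcal{K}(G)$ (all compact sets) but not with the one in the preliminaries (nonempty compact sets). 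The gap is cosmetic: if $G$ is non-compact, choose $g\notin A^{-1}A$, so that $Ag\cap A=\emptyset$; then monotonicity, right invariance and disjoint subadditivity give $f(A)\leq f(A\cup Ag)\leq 2f(A)$, hence $f(A)\geq 0$; and if $G$ is compact, every Van Hove net in $G$ is eventually equal to $G$, so the Ornstein-Weiss lemma for $G$ holds trivially.
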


\begin{corollary}\label{cor:CPSimpliesOW}
	Let $G$ be a locally compact amenable group. If there is a CPS, such that $G$ is the respective physical space, then $G$ satisfies the Ornstein-Weiss lemma. 
\end{corollary}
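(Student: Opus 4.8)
The plan is to obtain the corollary as an immediate consequence of the two preceding theorems, the only genuine input being the amenability of the product group. By the definition of a CPS, the hypothesis that $G$ is the physical space of some cut and project scheme means that there exist a locally compact amenable group $H$ and a uniform lattice $\Lambda$ in $G\times H$ such that $(G,H,\Lambda)$ is a CPS. In particular $G$ and $H$ are both locally compact amenable groups, and $\Lambda$ is a uniform lattice in $G\times H$. The chain of reasoning I would carry out is: first establish that $G\times H$ is amenable, then apply Theorem~\ref{theint:OrnsteinWeisslemma} to the amenable group $G\times H$ together with its uniform lattice $\Lambda$ to conclude that $G\times H$ satisfies the Ornstein-Weiss lemma, and finally invoke Theorem~\ref{theint:ProductandOW} to pass from the product to the factor $G$.

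The one step requiring justification beyond quoting the theorems is the amenability of $G\times H$. Here I would appeal to the classical fact that the class of locally compact amenable groups is closed under the formation of direct products (indeed under extensions): since $G$ and $H$ are amenable by the definition of a CPS, so is $G\times H$. This is the sole place where a standard external result enters, and I expect it to be the only point where any genuine care is needed; everything else is a direct application of the results stated above. With this in hand, $G\times H$ is an amenable group containing the uniform lattice $\Lambda$, so Theorem~\ref{theint:OrnsteinWeisslemma} applies verbatim and gives that $G\times H$ satisfies the Ornstein-Weiss lemma.

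To finish, I would observe that $G$ and $H$ are amenable and that $G\times H$ satisfies the Ornstein-Weiss lemma, whence Theorem~\ref{theint:ProductandOW} yields that $G$ (and, symmetrically, $H$) satisfies the Ornstein-Weiss lemma, which is exactly the assertion. I would also remark that the remaining structural data of a CPS, namely the injectivity of $\pi_G|_\Lambda$ and the density of $\pi_H(\Lambda)$ in $H$, play no role in this argument: they are needed to make the model set construction meaningful, but not for the entropy statement, which uses only that $\Lambda$ is a uniform lattice in a product of two amenable groups. Accordingly there is no real obstacle to overcome here; the content of the corollary lies entirely in the two theorems it combines, with the closure of amenability under products supplying the only auxiliary ingredient.
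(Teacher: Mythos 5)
Your proof is correct and is exactly the argument the paper intends: the corollary is placed immediately after Theorem~\ref{theint:ProductandOW} precisely because it follows by applying Theorem~\ref{theint:OrnsteinWeisslemma} to $G\times H$ (which contains the uniform lattice $\Lambda$ of the CPS) and then Theorem~\ref{theint:ProductandOW} to pass to the factor $G$. The one auxiliary step you single out, amenability of $G\times H$, is also available inside the paper's Van Hove framework via Lemma~\ref{lem:productVHnets} (products of Van Hove nets are Van Hove nets), so your appeal to the classical closure of amenability under products is harmless and nothing is missing.
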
	

	Naturally the question arises how restrictive the existence of such a CPS is. In fact in the commutative case it is shown by Y. Meyer, that $G$ is the physical space of a CPS, iff it is the physical space of a CPS with an euclidean internal space \cite[Chapter II.5.10]{meyer1972algebraic}, and this holds iff there exists a Meyer set in $G$ \cite[Chapter II.14]{meyer1972algebraic}. A \emph{Meyer set}\footnote{Note that these sets are the ''harmonious and relatively dense'' sets in \cite{meyer1972algebraic}.} is a discrete subset $\omega\subseteq G$ such that there are a finite set $F\subseteq G$ and  a compact set $K\subseteq G$ that satisfy $K\omega =G$ and $\omega\omega^{-1}\subseteq F\omega$. Examples of Meyer sets are all model sets. 
	
	A Meyer set $\omega$ that is symmetric $(\omega=\omega^{-1})$ and that contains the neutral element is called an \emph{approximate uniform lattice} and the arguments from \cite{meyer1972algebraic} also show that a locally compact abelian group $G$ is a physical space of a CPS iff $G$ contains a uniform approximate lattice \cite{bjorklund2018approximate}. This equivalence remains valid also in the context of connected nilpotent Lie groups  \cite{machado2018approximate}, but the existence of a CPS under the assumption of the existence of a Meyer set seems open for general locally compact groups. An example of a metrizable and separable locally compact abelian group $G$ that contains no Meyer set and therefore is not a physical space of a CPS is given in \cite[Chapter II.11]{meyer1972algebraic}. 
	
	Nevertheless we are interested in groups $G$ that contain Meyer sets and for those we can now define (relative) topological entropy for some action $\phi$ of $G$ on a compact metric space $(X,d)$. 
	For a compact subset $A\subseteq G$ define the Bowen metric for $x,y\in X$ as follows
	\[d_A(x,y):=\max_{g\in A} d(\alpha(g,x),\alpha(g,y)).\]
	Furthermore for $M\subseteq X$ and $\epsilon>0$ we denote the minimum cardinality of an open cover of $M$ consisting of sets of $d_A$-diameter\footnote{The \emph{$d$-diameter} of a set $M\subseteq X$ is defined by $\sup_{(x,y)\in M^2}d(x,y)$.	
	} strictly less than $\epsilon$ by $\operatorname{cov}_M[d_A<\epsilon]$. One then shows that $\mathcal{K}(G)\ni A\mapsto \log(\operatorname{cov}_M[d_A<\epsilon])$ is a monotone, right invariant and subadditive mapping and thus the Ornstein Weiss lemma can be applied to yield the existence of the following limit, as well as the independence from the choice of a Van Hove net. For some Van Hove net $(A_i)_{i\in I}$ one defines the \emph{topological entropy} of $\phi$ as 
	\[\operatorname{E}(\phi):=\sup_{\epsilon>0}\lim_{i\in I} \frac{\log(\operatorname{cov}_X[d_{A_i}<\epsilon])}{\mu(A_i)}.\]
	Furthermore if $p\colon X\to Y$ is a factor map onto some action $\psi\colon G\times Y\to Y$ one defines with a similar argument the \emph{relative topological entropy (of $p$)} as
	\[\operatorname{E}(\phi\overset{p}{\rightarrow}\psi):=\sup_{\epsilon>0}\lim_{i\in I} \frac{\log(\sup_{y\in Y}\operatorname{cov}_{p^{-1}(y)}[d_{A_i}<\epsilon])}{\mu(A_i)}.\]
	In particular there holds $\operatorname{E}(\phi\overset{p}{\rightarrow}\psi)=\operatorname{E}(\phi)$, whenever $Y$ is a single point. 
	
	It is standard to define the topological entropy of an action of $\mathbb{R}$ as the restriction to the action of $\mathbb{Z}$. We present next that in a similar way one obtains the relative topological entropy of an action as the scaled entropy of the restricted action to certain model sets and in particular any uniform lattice. 
	This allows to transfer several Theorems proven for discrete amenable groups to our context. 	
	
	To formulate the exact statement we need the following notions. We denote by $|F|$ the cardinality of a set $F$ and by $\mu$ the Haar measure on $G$. Furthermore we say, that a discrete subset $\Lambda\subseteq G$ has \emph{a well defined uniform density}, if $\operatorname{dens}(\Lambda):=\lim_{i\in I}\frac{|\Lambda \cap A_i|}{\mu(A_i)}$ exists, is finite and is independent from the choice of the Van Hove net $(A_i)_{i\in I}$. We refer to $\operatorname{dens}(\Lambda)$ as the \emph{uniform density} of $\Lambda$. Note that every uniform lattice $\Lambda$ has a well defined uniform density, which is given by $\operatorname{dens}(\Lambda):=\mu(C)^{-1}$, where $C$ denotes a fundamental domain of $\Lambda$. See Section \ref{sec:prelims} for details on this notion. Furthermore in locally compact abelian groups all regular\footnote{A model set is said to be \emph{regular}, if the Haar measure of the topological boundary of the corresponding window is $0$.} model sets have a well defined uniform density \cite[Corollary 15.1]{strungaru2015almost}. A subset $\Lambda\subseteq G$ is called \emph{relatively dense}, if there holds $K\Lambda =G$ for some compact subset $K\subseteq G$. Note that all model sets are relatively dense. 
	Denote for any map $f\colon A \to B$ and any subset $M\subseteq A$ by $f\big|_M$ the restriction $f\big|_M\colon M\to B\colon a\mapsto f(a)$. 
The following statement is contained in the statement of Theorem \ref{the:linkoflatticetogroup} in Section \ref{sec:Latticerestriction}. 

 \begin{theorem}\label{theint:linkoflatticetogroup}
 Let $\phi$ be an action of $G$ on a compact metric space $X$. Let furthermore $\psi$ be a factor of $\phi$ via factor map $p\colon X\to Y$. Let $\Lambda$ be a relatively dense subset of $G$ and let $(A_i)_{i\in I}$ be a Van Hove net. Set $F_i:=A_i\cap \Lambda$.
	\begin{itemize}
		\item[(i)] If $\Lambda$ has a well defined uniform density $\operatorname{dens}(\Lambda)$, then there holds 
\[\operatorname{E}(\phi\overset{p}{\to}\psi)=\operatorname{dens}(\Lambda)\sup_{\epsilon>0}\lim_{i\in I} \frac{\log(\sup_{y\in Y}\operatorname{cov}_{p^{-1}(y)}[d_{F_i}<\epsilon])}{|F_i|}\]
	\item[(ii)] If $\Lambda$ is a uniform lattice, then there holds
	\[\operatorname{E}(\phi\overset{p}{\to}\psi)=\operatorname{dens}(\Lambda)\operatorname{E}\left(\phi\big|_{\Lambda\times X}\overset{p}{\to} \psi\big|_{\Lambda\times Y}\right).\]
	\end{itemize}

%Let $\phi$ be an action of a group $G$ that satisfies the Ornstein-Weiss lemma on a compact Hausdorff space $X$. Let furthermore $\psi$ be a factor of $\phi$ via factor map $p\colon X\to Y$. Let $\Lambda$ be a uniform lattice in $G$ with fundamental domain\footnote{See section \ref{sec:prelims} for a definition.} $C$. Then 
%	\[\mu(C)\operatorname{E}(\phi\overset{p}{\to}\psi)=\operatorname{E}\left(\phi\big|_{\Lambda\times X}\overset{p}{\to} \psi\big|_{\Lambda\times Y}\right).\]
\end{theorem}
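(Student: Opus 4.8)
The plan is to establish (i) by a resolution-wise comparison of the fibred covering numbers taken along the full window $A_i$ with those taken along its discrete trace $F_i=A_i\cap\Lambda$, and then to derive (ii) from (i) by identifying $(F_i)_{i\in I}$ as a Van Hove net inside the lattice $\Lambda$. Throughout write $h_\epsilon(A):=\log(\sup_{y\in Y}\operatorname{cov}_{p^{-1}(y)}[d_A<\epsilon])$; as recalled before the statement this is monotone, right invariant and subadditive on $\mathcal{K}(G)$, so the Ornstein--Weiss lemma gives that $a(\epsilon):=\lim_{i}h_\epsilon(A_i)/\mu(A_i)$ exists, is finite and net independent, with $\operatorname{E}(\phi\overset{p}{\to}\psi)=\sup_{\epsilon>0}a(\epsilon)$. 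One inequality is free: from $F_i\subseteq A_i$ we get $d_{F_i}\le d_{A_i}$, hence $h_\epsilon(F_i)\le h_\epsilon(A_i)$ and $\limsup_i h_\epsilon(F_i)/\mu(A_i)\le a(\epsilon)$.

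For the reverse comparison I would use relative density. Fix a compact $K\ni e$ with $K\Lambda=G$; by uniform continuity of the action on $K\times X$, to each $\epsilon$ there is $\delta=\delta(\epsilon)\in(0,\epsilon]$ with $d(u,v)<\delta\Rightarrow\sup_{k\in K}d(\alpha(k,u),\alpha(k,v))<\epsilon$. Given $g\in A_i$, write $g=k\lambda$ with $k\in K$ and $\lambda\in\Lambda$; then $\lambda=k^{-1}g\in F_i':=K^{-1}A_i\cap\Lambda$, and equivariance realises $\alpha(g,\cdot)$ as $\alpha(\lambda,\cdot)$ followed by the homeomorphism $\alpha(k,\cdot)$, so $d_{F_i'}(x,y)<\delta$ forces $d_{A_i}(x,y)<\epsilon$. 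Hence $h_\epsilon(A_i)\le h_\delta(F_i')$, and splitting $F_i'=F_i\sqcup(F_i'\setminus F_i)$ and invoking subadditivity together with right invariance (which makes $h_\delta(\{\lambda\})=h_\delta(\{e\})=:c_\delta$ a constant) yields $h_\delta(F_i')\le h_\delta(F_i)+c_\delta\,|F_i'\setminus F_i|$.

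The decisive estimate is that the excess $|F_i'\setminus F_i|=|(K^{-1}A_i\setminus A_i)\cap\Lambda|$ is negligible. Since Van Hove nets are stable under multiplication by a fixed compact set, $(K^{-1}A_i)_i$ is again Van Hove with $\mu(K^{-1}A_i)/\mu(A_i)\to1$; as $\Lambda$ has a well defined uniform density both $|K^{-1}A_i\cap\Lambda|/\mu(A_i)$ and $|F_i|/\mu(A_i)$ converge to $\operatorname{dens}(\Lambda)$, so $|F_i'\setminus F_i|=o(\mu(A_i))$. Dividing by $\mu(A_i)$ and passing to the limit gives $a(\epsilon)\le\liminf_i h_{\delta(\epsilon)}(F_i)/\mu(A_i)$. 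Combining this with the free inequality and taking $\sup_{\epsilon>0}$ — where the gap between $\delta(\epsilon)$ and $\epsilon$ is harmless because both sides are monotone in the resolution, so that $\lim_i h_\epsilon(F_i)/\mu(A_i)$ exists at the continuity points of the monotone map $a$ and the suprema agree — gives $\sup_{\epsilon}\lim_i h_\epsilon(F_i)/\mu(A_i)=\sup_\epsilon a(\epsilon)=\operatorname{E}(\phi\overset{p}{\to}\psi)$. Finally $|F_i|/\mu(A_i)\to\operatorname{dens}(\Lambda)>0$ (positivity holds because $K\Lambda=G$) lets me trade the normalisation $\mu(A_i)$ for $|F_i|$ at the price of the factor $\operatorname{dens}(\Lambda)$, which is the formula in (i).

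For (ii) a uniform lattice is relatively dense with $\operatorname{dens}(\Lambda)=\mu(C)^{-1}$ for a fundamental domain $C$, so (i) applies and the claim reduces to
\[\sup_{\epsilon>0}\lim_i\frac{h_\epsilon(F_i)}{|F_i|}=\operatorname{E}\left(\phi\big|_{\Lambda\times X}\overset{p}{\to}\psi\big|_{\Lambda\times Y}\right).\]
The crucial point is that $(F_i)=(A_i\cap\Lambda)$ is a Van Hove (equivalently F\o lner) net in the discrete group $\Lambda$: comparing counting measure on $\Lambda$ with Haar measure via the tiling $G=\bigsqcup_{\lambda\in\Lambda}C\lambda$, the $S$-boundary of $F_i$ in $\Lambda$, for finite $S\subseteq\Lambda$, is carried by the lattice points of a fixed compact neighbourhood of the Van Hove boundary of $A_i$ and so has cardinality $o(\mu(A_i))=o(|F_i|)$. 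As $\Lambda$ is discrete amenable it satisfies the Ornstein--Weiss lemma, and $h_\epsilon|_{\mathcal{K}(\Lambda)}$ is monotone, right $\Lambda$-invariant and subadditive, so $\lim_i h_\epsilon(F_i)/|F_i|$ exists, is net independent and equals the defining limit of the relative entropy of the restricted action; taking $\sup_{\epsilon>0}$ gives the displayed identity and hence (ii). I expect the main obstacle to be twofold: controlling the discrepancy between $A_i$ and $K^{-1}A_i$ at the level of lattice points in the uniform-continuity and relative-density comparison, where the Van Hove property must be used in exactly the right form, and proving that the trace $(A_i\cap\Lambda)$ is genuinely a Van Hove net in $\Lambda$ — precisely the places where a well defined uniform density in (i), respectively the fundamental-domain structure of a uniform lattice in (ii), enter.
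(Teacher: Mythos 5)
Your proposal is correct in substance, but it reaches both parts by routes that genuinely differ from the paper's, so a comparison is in order. For (i), the paper (Proposition \ref{pro:MittelnalongDelone} feeding into Theorem \ref{the:linkoflatticetogroup}) never counts excess lattice points: it sandwiches $\overline{A_i\setminus\partial_M A_i}\subseteq A_i\setminus\partial_K A_i\subseteq KF_i\subseteq KA_i$, applies the net-independence from the Ornstein--Weiss lemma to the three Van Hove nets $(\overline{A_i\setminus\partial_M A_i})_{i\in I}$, $(A_i)_{i\in I}$ and $(KA_i)_{i\in I}$ to obtain $\lim_i f(KF_i)/\mu(A_i)=\lim_i f(A_i)/\mu(A_i)$ for every monotone, right invariant, subadditive $f$, and only afterwards passes from $KF_i$ to $F_i$ via the identity $\eta_{KF_i}=(\eta_K)_{F_i}$, where $\eta_K$ is itself an entourage --- the uniformity-language version of your $\delta(\epsilon)$ uniform-continuity step. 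Your substitute for the sandwich --- $h_\epsilon(A_i)\le h_{\delta}(F_i')$ with $F_i'=K^{-1}A_i\cap\Lambda$, then subadditivity, right invariance and the counting estimate $|F_i'\setminus F_i|=o(\mu(A_i))$ --- is sound, but it spends the well-defined-density hypothesis twice (once for the excess, once for the normalisation), whereas in the paper density enters only to trade $\mu(A_i)$ for $|F_i|$: the sandwich itself is density-free. For (ii), the paper does not prove that the trace $(A_i\cap\Lambda)_{i\in I}$ is a Van Hove net in $\Lambda$; instead it invokes Theorem \ref{the:OrnsteinWeisslemma} (resting on Lemma \ref{lem:constructingFhatandFcheck}, which manufactures the different nets $(\check{F}_i)$ and $(\hat{F}_i)$ from $(A_i)$) and compares with an arbitrary Van Hove net $(E_j)$ in $\Lambda$ through the sets $\overline{C}E_j$. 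Your trace lemma is an extra ingredient not in the paper, and your sketch of it is right: $\partial^\Lambda_S F_i\subseteq \Lambda\cap\partial^G_S A_i$, and the tiling by $C$ gives $|\Lambda\cap\partial^G_S A_i|\le \mu(\partial^G_{\overline{C}S}A_i)/\mu(C)=o(\mu(A_i))=o(|F_i|)$. What this buys is a self-contained reduction of (ii) to the discrete Ornstein--Weiss lemma applied directly to $(F_i)$, and it makes the limits in (ii) genuinely exist.

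One parenthetical claim in your write-up is unjustified, although your proof does not actually need it: that $\lim_i h_\epsilon(F_i)/\mu(A_i)$ exists at continuity points of $a$. Writing $L^-(\epsilon):=\liminf_i h_\epsilon(F_i)/\mu(A_i)$ and $L^+(\epsilon):=\limsup_i h_\epsilon(F_i)/\mu(A_i)$, your inequalities give $L^-\le L^+\le a$ and $a(\epsilon)\le L^-(\delta(\epsilon))$, but $\delta(\epsilon)$ comes from a modulus of uniform continuity and carries no lower bound in terms of $\epsilon$, so continuity of $a$ gives no control on $a(\delta(\epsilon))-a(\epsilon)$ and cannot close the gap at a fixed resolution: for instance $a(\epsilon)=2-\epsilon$, $L^+(\epsilon)=2-2\epsilon$, $L^-(\epsilon)=2-3\epsilon$ with $\delta(\epsilon)=\epsilon/3$ satisfies every relation you derived, with $a$ continuous, yet $L^-<L^+$ everywhere. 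What your argument does prove is $\sup_{\epsilon>0}\liminf_i h_\epsilon(F_i)/|F_i|=\sup_{\epsilon>0}\limsup_i h_\epsilon(F_i)/|F_i|=\operatorname{E}(\phi\overset{p}{\to}\psi)/\operatorname{dens}(\Lambda)$, and this is exactly the form in which the paper itself states part (i) in Theorem \ref{the:linkoflatticetogroup}; the $\lim$ in the introductory formulation is shorthand for that statement. In part (ii) the limits do exist, by the Ornstein--Weiss lemma in the discrete group $\Lambda$, so there nothing needs repair.
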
 
	
	We extend Bowens formula beyond $\mathbb{Z}$ actions in the next result to actions of compactly generated locally compact abelian groups such as $\mathbb{R}^d$ and $\mathbb{Z}^d$, which are the most common choices in the study of cut and project schemes. 
\begin{theorem} \label{theint:Bowenstheoremgeneral}
		Let $\phi$, $\psi$ and $\rho$ be actions of an amenable group containing a countable uniform lattice on compact Hausdorff spaces $X$, $Y$ and $Z$ respectively. Let $\psi$ be a factor of $\phi$ via factor map $p$ and $\rho$ be a factor of $\psi$ via factor map $q$. Then there holds 
		\begin{align*}
		\max\{\operatorname{E}(\phi \overset{p}{\to}\psi),\operatorname{E}(\psi \overset{q}{\to}\rho)\} \leq \operatorname{E}(\phi \overset{q\circ p}{\to}\rho) 
		\leq \operatorname{E}(\phi \overset{p}{\to}\psi)+\operatorname{E}(\psi \overset{q}{\to}\rho). 
		\end{align*}
	\end{theorem}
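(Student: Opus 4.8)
The plan is to reduce the statement to the already-understood case of a countable discrete amenable group by means of Theorem \ref{theint:linkoflatticetogroup}(ii) (in the general form of Theorem \ref{the:linkoflatticetogroup}). Since $G$ is assumed to contain a countable uniform lattice, fix such a lattice $\Lambda\subseteq G$; it is relatively dense and has a well defined uniform density $\operatorname{dens}(\Lambda)=\mu(C)^{-1}\in(0,\infty)$, where $C$ is a fundamental domain. First I would apply Theorem \ref{theint:linkoflatticetogroup}(ii) three times: to the factor map $p$, to $q$ (with $\psi,\rho$ in the roles of $\phi,\psi$), and to the composite factor map $q\circ p$. This yields $\operatorname{E}(\phi\overset{p}{\to}\psi)=\operatorname{dens}(\Lambda)\operatorname{E}(\phi|_{\Lambda}\overset{p}{\to}\psi|_{\Lambda})$ together with the two analogous identities for $q$ and for $q\circ p$, all carrying the \emph{same} positive constant $\operatorname{dens}(\Lambda)$, which depends only on $\Lambda$ and $G$ and not on the actions or factor maps. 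Dividing the asserted chain of inequalities by $\operatorname{dens}(\Lambda)>0$ thus reduces everything to Bowen's formula for the restricted actions of the countable discrete amenable group $\Lambda$ on $X$, $Y$ and $Z$.

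For the discrete group $\Lambda$ these are the classical Bowen relations for relative topological entropy, which I would either cite (e.g.\ \cite{Yan}) or establish from the covering definition. The lower bound splits as $\operatorname{E}(\phi\overset{p}{\to}\psi)\le\operatorname{E}(\phi\overset{q\circ p}{\to}\rho)$ and $\operatorname{E}(\psi\overset{q}{\to}\rho)\le\operatorname{E}(\phi\overset{q\circ p}{\to}\rho)$. The first follows from the fibre inclusion $p^{-1}(y)\subseteq(q\circ p)^{-1}(q(y))$: restricting a cover of the larger fibre to the smaller one shows $\operatorname{cov}_{p^{-1}(y)}[d_{F}<\epsilon]\le\sup_{z}\operatorname{cov}_{(q\circ p)^{-1}(z)}[d_{F}<\epsilon]$ for every finite $F\subseteq\Lambda$. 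The second uses that $p$ is an equivariant continuous surjection: since $p$ intertwines the actions, its single-variable modulus of uniform continuity upgrades to all Bowen metrics $d_{F}$ simultaneously, so there is for each $\epsilon>0$ a $\delta>0$, \emph{independent of $F$}, such that $p$ maps any set of $d_{F}$-diameter $<\delta$ onto one of $d_{F}$-diameter $<\epsilon$; pushing a cover of $(q\circ p)^{-1}(z)$ forward and enlarging $\epsilon$ by an arbitrarily small amount (harmless after $\sup_{\epsilon}$) covers $q^{-1}(z)=p((q\circ p)^{-1}(z))$. Taking logarithms, dividing by $|F_{i}|$, passing to the limit (which exists by the discrete Ornstein--Weiss lemma, applicable since $\Lambda$ is itself a uniform lattice in $\Lambda$, by Theorem \ref{theint:OrnsteinWeisslemma}) and then $\sup_{\epsilon}$ gives both inequalities.

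The substantive part is the upper bound, i.e.\ the submultiplicative covering estimate $\operatorname{cov}_{(q\circ p)^{-1}(z)}[d_{F}<\epsilon]\le\operatorname{cov}_{q^{-1}(z)}[d_{F}<\delta]\cdot\sup_{y}\operatorname{cov}_{p^{-1}(y)}[d_{F}<\epsilon]$. I would cover $q^{-1}(z)$ by $d_{F}$-small sets $V_{j}$, cover each fibre $p^{-1}(y_{j})$ by $d_{F}$-small open sets $\{U_{j,k}\}_{k}$, and use the tube lemma to find a neighbourhood of $y_{j}$ whose $p$-preimage lies in $\bigcup_{k}U_{j,k}$, so that the sets $p^{-1}(V_{j})\cap U_{j,k}$ cover $(q\circ p)^{-1}(z)$ with the required cardinality. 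The main obstacle is that the Lebesgue number $\delta$ produced by the tube-lemma step depends a priori on the averaging set $F$, which would obstruct the passage to the limit along the Van Hove net. I expect to remove this dependence by reformulating the relative entropies through finite open covers and their dynamical refinements rather than through $d_{F}$: in that formulation the estimate $N\big((\mathcal U\vee p^{-1}\mathcal V)^{F}\,|\,(q\circ p)^{-1}(z)\big)\le N\big((p^{-1}\mathcal V)^{F}\,|\,q^{-1}(z)\big)\cdot\max_{y}N\big(\mathcal U^{F}\,|\,p^{-1}(y)\big)$ for the relative cover numbers is purely combinatorial and uniform in $F$, and the comparison with the metric definition requires only Lebesgue numbers of the \emph{fixed} covers $\mathcal U,\mathcal V$, not of $F$-dependent ones. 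Checking that this open-cover quantity coincides with the fibre-covering quantity in the definition of $\operatorname{E}(\cdot\overset{\cdot}{\to}\cdot)$, and that $\operatorname{dens}(\Lambda)$ cancels cleanly throughout, is the final point to verify.
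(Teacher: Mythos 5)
Your first reduction step coincides with the paper's: both pass to a countable uniform lattice $\Lambda$ via Theorem \ref{the:linkoflatticetogroup}. Your two lower-bound arguments for the discrete case are also essentially correct: the fibre inclusion $p^{-1}(y)\subseteq (q\circ p)^{-1}(q(y))$ gives $\operatorname{cov}_p(\eta_A)\leq\operatorname{cov}_{q\circ p}(\eta_A)$ directly, and the uniform-continuity argument for $\operatorname{E}(\psi\overset{q}{\to}\rho)\leq\operatorname{E}(\phi\overset{q\circ p}{\to}\rho)$ goes through cleanly once phrased with spanning sets (Theorem \ref{the:Entropyingenerality}), which avoids the problem that $p$-images of open sets need not be open. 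This part is a legitimate, more elementary alternative to the paper, which derives even the lower bounds measure-theoretically.

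The genuine gap is the upper bound, and it cannot be repaired by the reformulation you suggest. The covering inequality
\begin{align*}
N\big((\mathcal U\vee p^{-1}\mathcal V)^{F}\mid (q\circ p)^{-1}(z)\big)\leq N\big(\mathcal V^{F}\mid q^{-1}(z)\big)\cdot\max_{y}N\big(\mathcal U^{F}\mid p^{-1}(y)\big)
\end{align*}
is false as a statement for fixed $F$: take $Y=X$, $p=\operatorname{id}_X$, $Z$ a single point and $\mathcal V=\{Y\}$ the trivial cover; then the right-hand side equals $1$, while the left-hand side is $N(\mathcal U^{F}\mid X)$, which exceeds $1$ for any nontrivial cover and grows exponentially in $|F|$ for any system of positive entropy. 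The obstruction you yourself identified is intrinsic, not an artifact of Bowen metrics versus open covers: a cover adapted to a single fibre $p^{-1}(y)$ only covers $p^{-1}(W)$ for some tube neighbourhood $W$ of $y$, and the number of tubes needed to exhaust $p^{-1}(V_j)$, where $V_j\in\mathcal V^{F}$ shrinks as $F$ grows, is itself $F$-dependent. Even for $\Lambda=\mathbb{Z}$, Bowen's proof of this inequality \cite{bowen1971entropy} is not a static covering estimate but an orbit-decomposition argument, spanning the fibre over the current base point for a fibre-dependent time and concatenating variable-length blocks; this exploits the linear order of $\mathbb{Z}$ and has no analogue for a general countable amenable group. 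This is precisely why the paper proves the upper bound measure-theoretically: the variational principle (Theorem \ref{the:Variationalprinciple}) converts $\operatorname{E}(\cdot\overset{\cdot}{\to}\cdot)$ into a supremum of relative measure entropies over $\mathcal M_{\phi^\Lambda}$, the Rohlin--Abramov theorem (Proposition \ref{pro:RohlinAbramov}) gives exact additivity $\operatorname{E}_{\nu}(\phi^\Lambda\overset{q\circ p}{\to}\rho^\Lambda)=\operatorname{E}_{\nu}(\phi^\Lambda\overset{p}{\to}\psi^\Lambda)+\operatorname{E}_{p_*\nu}(\psi^\Lambda\overset{q}{\to}\rho^\Lambda)$ for each invariant measure, and Krylov--Bogolyubov (Lemma \ref{lem:KrylovBogoliubovgeneralized}) allows the supremum to be split; both inequalities of the theorem then drop out of the same identity. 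Finally, your fallback of simply citing \cite{Yan} for the discrete Bowen formula would not close the gap in the stated generality either, since Yan's results are formulated for compact metric spaces, whereas the theorem here concerns compact Hausdorff phase spaces.
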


The article is structured as follows. In Section \ref{sec:prelims} we fix some notion. Section \ref{sec:Ornstein-Weiss groups} is devoted to the proof of the Theorems \ref{theint:OrnsteinWeisslemma} and \ref{theint:ProductandOW}. In Section \ref{sec:Entropy theory for Ornstein-Weiss groups} we present basic results from the theory of relative topological entropy. As the mentioned topology in the construction of Delone dynamical systems is naturally defined via the notion of a uniformity \cite{Schlottmann,baake2004dynamical} and the corresponding arguments are well known  \cite{OrnsteinandWeiss,ward1992abramov,ollagnier2007ergodic,HuangandYeandZhang, weiss2003actions, Yan, ZhengandEChen, zhou2016tail}, we took the freedom to follow an idea from \cite{Hood,ollagnier2007ergodic,DikranjanandSanchis,YanandZeng} and use the language of uniformities in order to prove the results for actions on compact Hausdorff spaces. 
%	As an application of the independence of the definition of relative topological entropy from the choice of a Van-Hove net we will present that the approach to relative topological entropy simplifies for positive expanding systems similarly to the case of actions of continuous maps, like considered in \cite{BrinandStuck}. 	
	The approach via uniformities also yields tools to provide the equivalence of the definitions given in \cite{Tagi-Zade}, our and the classical definitions. %, which will be done in Section \ref{sec:Actions on metric and topological spaces}. 
	In particular we show that the implicit dependencies on Van Hove nets in \cite{baake2007pure,HuckandRichard,JaegerLenzandOertel} can be dropped.
	Theorem \ref{theint:linkoflatticetogroup} is proven in Section \ref{sec:Latticerestriction}.
	In Section \ref{sec:The Bowen entropy formula for actions of groups that contain a uniform lattice} we present a proof of Theorem \ref{theint:Bowenstheoremgeneral} and properties of the factor map $p$ under which we obtain $\operatorname{E}(\phi)=\operatorname{E}(\psi)$.

\section{Preliminaries}\label{sec:prelims}

In this section we provide notion and background on topological groups, uniformities, topological dynamical systems, amenable groups, Van Hove nets and uniform lattices.

\subsection{Topological groups}
%(OWG implies amenable implies  unimodular implies locally compact. 
Consider a group $G$. We write $e_G$ for the neutral element in $G$. For subsets $A,B\subseteq G$ the \emph{Minkowski product} is defined as 
$AB:=\{ab;\, (a,b)\in A \times B\}.$
For $A\subseteq G$ and $g\in G$ we denote $Ag:=A\{g\}$, $gA:=\{g\}A$, $A^c:=G\setminus A$ and the \emph{Minkowski inverse} $A^{-1}:=\{a^{-1};\, a\in A\}$. We call $A\subseteq G$ \emph{symmetric}, if $A=A^{-1}$. In order to omit  brackets, we will use the convention, that the inverse and the complement are stronger binding than the Minkowski product, which is stronger binding than the remaining set theoretic operations. Note that the complement and the inverse commute, i.e. $(A^c)^{-1}=(A^{-1})^c$. %Thus for $A\subseteq G$ there holds for example $AA^{-1}\setminus AA^c=(A(A^{-1}))\setminus (A(A^c))$.

A \emph{topological group} is a group $G$ equipped with a $T_1$-topology\footnote{
A topology is called $T_1$, if for any two distinct points $g,g'\in G$ there is an open neighbourhood of $g$ that does not contain $g'$.
}
 $\tau$, such that the multiplication $\cdot\colon G\times G \to G$ and the inverse function $(\cdot)^{-1}\colon G\to G$ are continuous. %Furthermore we assume a topological group to be $T_0$, i.e. that for any two distinct points $g,g'\in G$ there is an open neighbourhood of $g$ that does not contain $g'$.
 With our definition every topological group is regular, hence Hausdorff, as shown in \cite[Theorem 4.8]{HewittandRoss}. An \emph{isomorphism of topological groups} is a homeomorphism that is a group homomorphism as well. We write $\overline{A}$ for the {closure} and $\operatorname{int}(A)$ for the {interior} of a subset $A\subseteq G$. By $\mathcal{K}(G)$ we denote the set of all non-empty compact subsets of $G$. %Furthermore we abbreviate the term abelian locally compact group by \emph{LCAG}. 

If $G$ is a locally compact group, a \emph{Haar measure} on $G$ is a non zero regular Borel measure $\mu$ on $G$, which satisfies $\mu(gA)=\mu(A)=\mu(Ag)$  for all $g\in G$ and all Borel sets $A\subseteq G$. $G$ is called \emph{unimodular}, if it admits a Haar measure.
There holds $\mu(U)>0$ for all non empty open $U\subseteq G$ and  $\mu(K)<\infty$ for all compact $K\subseteq G$. 
A Haar measure is unique up to scaling, i.e.\ if $\mu$ and $\nu$ are Haar measures on $G$, then there is $c>0$ such that $\mu(A)=c\nu(A)$ for all Borel measurable sets $A\subseteq G$. %(see \cite[Theorem 10.14]{Folland}).
If nothing else is mentioned, we denote a Haar measure of a topological group $G$ by $\mu$. If $G$ is discrete we equip $G$ with the counting measure $A\mapsto|A|$, which is a Haar measure. Other examples of unimodular groups are all locally compact abelian groups or the Heisenberg group, as presented in Subsection \ref{sub:Uniformlattices} below.  
For reference see \cite{Folland,deitmar2014principles}.

\subsection{Compact Hausdorff uniform spaces}

%\subsubsection{Binary relations}
	Let $X$ be a set. 
	A \emph{binary relation on $X$} is a subset of $X\times X$. For binary relations $\eta$ and $\kappa$ on $X$ we denote the \emph{inverse}  
$\eta^{-1}:=\{(y,x);\, (x,y)\in \eta \}$, the \emph{composition} $\eta \kappa:=\{(x,y);\, \exists z\in X : (x,z)\in \eta \text{ and } (z,y)\in \kappa\}$ and $\eta[x]:=\{y\in X;\, (y,x)\in \eta\}$.
A binary relation is called \emph{symmetric}, if $\eta=\eta^{-1}$. 
%For $\eta\subseteq X\times X$ and $x\in  X$ we write %$[x]\eta:=\{y\in X;\, (x,y)\in \eta\}$ and $\eta[x]:=[x]\eta^{-1}$. 

%For $M\subseteq X$ we denote $[M]\eta:=\bigcup_{x\in M}[x]\eta$ and $\eta[M]:=[M]\eta^{-1}$. %We use the convention that these operations are stronger binding than set theoretic operations
%We write furthermore 
%\[[M]\eta:=\{x\in X;\, \exists m\in M: (m,x)\in \eta\}\] 
%and $\eta[M]:=[M]\eta^{-1}$ for $M\subseteq X$ and $\eta\subseteq X\times X$ and abbreviate $[x]\eta:=[\{x\}]\eta$ and $\eta[x]:=\eta[\{x\}]$ for $x\in X$. 

%\subsubsection{Uniform spaces}

	For a compact Hausdorff space $X$, we denote the \emph{diagonal} $\Delta_X:=\{(x,x);\, x\in X\}$ and call a neighbourhood of $\Delta_X$ in $X^2$ an \emph{entourage (of $X$)}. The set of all entourages of $X$ is referred to as the \emph{uniformity of $X$} and usually denoted by $\mathbb{U}_X$. In this context we refer to $(X,\mathbb{U}_X)$ as a \emph{compact Hausdorff uniform space}. 
	Note that one can define general ''uniform spaces'', but as we are only interested in compact Hausdorff spaces, this definition works for us. For details and the general definition we recommend \cite{Kelley}. Note that we obtain our definition to be a restriction of the general definition from \cite[Theorem 6.22]{Kelley} and \cite[Theorem 32.3]{Munkres}. %A \emph{uniformity} for a set $X$ is a non-empty family $\mathbb{U}_X$ of reflexive binary relations such that $\mathbb{U}_X$ is invariant under composition, inversion and intersection and such that for $\eta\in \mathbb{U}_X$ also $\kappa\in \mathbb{U}_X$, whenever $\eta\subseteq\kappa$. 
%\begin{itemize}
%	\item[(a)] each member of $\mathbb{U}_X$ contains the diagonal $\Delta_X$;
%	\item[(b)] if $\eta \in \mathbb{U}_X$, then $\eta^{-1}\in \mathbb{U}_X$;
%	\item[(c)] if $\eta \in \mathbb{U}_X$, then there is $\kappa \in \mathbb{U}_X$ such that $\kappa \kappa \subseteq \eta$;
%	\item[(d)] if $\eta$ and $\kappa$ are members of $\mathbb{U}_X$, then so is $\eta \cap \kappa$; and
%	\item[(e)] if $\eta \in \mathbb{U}_X$ and $\eta \subseteq \kappa \subseteq X\times X$, then $\kappa \in \mathbb{U}_X$.
%\end{itemize}
%The pair $(X,\mathbb{U}_X)$ is called a \emph{uniform space} and the members of $\mathbb{U}_X$ are called \emph{entourages}. 
To obtain some geometric intuition for $\eta\in \mathbb{U}_X$ we say that \emph{$x$ is $\eta$-close to $y$}, whenever $(x,y)\in \eta$. 
This notion is symmetric iff $\eta$ is symmetric. We think of two elements to be ''very close'', whenever the pair is $\eta$-close for ''many'' entourages $\eta$.  
 %If $x$ is $\eta$-close to $x'$ and $x'$ is $\eta$-close to $x$, we say that \emph{$x$ and $x'$ are $\eta$-close}.
Note that if $x$ is $\eta$-close to $y$ and $y$ is $\kappa$-close to $z$, then $x$ is $\eta \kappa$-close to $z$.

%If $(X,\mathbb{U}_X)$ is a uniform space the corresponding \emph{uniform topology} $\mathcal{T}_X$ consists of all subsets $U\subseteq X$ such that for each $x\in U$ there exists $\eta\in \mathbb{U}_X$ with $\eta[x]\subseteq U$. Topological terminology in the context of uniform spaces refers to this topology. 
A subfamily $\mathbb{B}_X\subseteq \mathbb{U}_X$ is called a \emph{base for $\mathbb{U}_X$}, if every entourage contains a member of $\mathbb{B}_X$. An entourage $\eta \in \mathbb{U}_X$ is called \emph{open} (or \emph{closed}), whenever it is open (or closed) as a subset of $X\times X$.  Note that the family of all open and symmetric entourages of $X$ forms a base of the uniformity of $X$. %see \cite[Theorem 6.6]{Kelley}
	If $(X,d)$ is a metric space we denote $[d<\epsilon]:=\{(x,y)\in X \times X;\, d(x,y)<\epsilon \}$ for $\epsilon>0$.  
Then $\mathbb{B}_d:=\{[d<\epsilon];\, \epsilon>0\}$ is a base for the uniformity of the corresponding  topological space $X$. 
% $\mathbb{U}_X$ consisting of all binary relations that contain $[d<\epsilon]$ for some $\epsilon>0$. The corresponding topology is the topology of open sets with respect to $d$.  
Note that $x$ is $[d<\epsilon]$-close to $y$, iff $d(x,y)<\epsilon$.

\subsection{Actions of a group on a topological space}
%Flows

Let $G$ be a topological group and $X$ be a topological space. A continuous map $\phi\colon G\times X \to X$ is called an \emph{action of $G$ on $X$} (also \emph{dynamical system} or \emph{flow}), whenever $\phi(e_G,\cdot)$ is the  identity on $X$ and for all $g,g'\in G$ there holds $\phi(g,\phi(g',\cdot))=\phi(gg',\cdot)$. We write $\phi^g:=\phi(g,\cdot)\colon X\to X$ for all $g\in G$. %Thus $\phi^{e_G}$ is the identity on $X$ and $\phi^g\circ \phi^{g'}=\phi^{gg'}$ for all $g,g'\in G$. 
In this context $X$ is called the \emph{phase space} of the action.  
If $\phi$ and $\psi$ are actions of a topological group $G$ on topological spaces $X$ and $Y$ respectively, we call a surjective continuous map $p\colon X\to Y$ a \emph{factor map}, if $p\circ \phi^g=\psi^g\circ p$ for all $g\in G$. We then refer to $\psi$ as a \emph{factor} of $\phi$ and write $\phi \overset{p}{\to}\psi$. If $p$ is in addition a homeomorphism, then $p$ is called a \emph{topological conjugacy} and we call $\phi$ and $\psi$ \emph{topologically conjugate}.

\subsection{Amenable groups and Van Hove nets}\label{sec:Amenable+VanHove}

A partially ordered set $(I,\geq)$ is said to be \emph{directed}, if $I$ is not empty
and if every finite subset of $I$ has an upper bound. A map $f$ from a directed set $I$ to a set $X$ is called a \emph{net} in $X$. We  also write $x_i$ for $f(i)$ and $(x_i)_{i\in I}$ for $f$.
A net $(x_i)_{i\in I}$ in a topological space $X$ is said to \emph{converge to $x\in X$}, if for every open neighbourhood $U$ of $x$, there exists $j\in  I$ such that $x_i\in U$ for all $i \geq j$. In this case we also write $\lim_{i\in I} x_i = x$. 
%Let $(x_i)_{i\in I}$ and $(y_j)_{j\in J}$ be nets in a set $X$. The net $(y_j)_{j\in J}$ is called a \emph{subnet} of $(x_i)_{i\in I}$ if there exists a function $\phi\colon J \to I$ such that $y_j=x_{\phi(j)}$ for $j\in J$ and such that for all $i \in I$ there is $m \in J$ with the property that, if $j \geq m$ then $\phi(j) \geq i$.
For a net $(x_i)_{i\in I}$ in $\mathbb{R}\cup \{ -\infty,\infty\}$, we define 
$\limsup_{i\in I} x_i:=\inf_{i\in I}\sup_{j\geq i}x_j$
and similarly $\liminf_{i\in I} x_i$. Note that $(x_i)_{i\in I}$ converges to $x\in \mathbb{R}\cup\{-\infty,\infty\}$, iff there holds $\limsup_{i\in I}x_i=x=\liminf_{i\in I}x_i$. 
For more details, see \cite{DunfordandSchwartz} and \cite{Kelley}.

	Let $G$ be a unimodular group. For $K,A\subseteq G$ we define the \emph{$K$-boundary of $A$} as
\[\partial_K A:=K\overline{A}\cap K \overline{A^c}.\]
	We use the convention, that the Minkowski operations and the complement are stronger binding than the operation of taking the $K$-boundary and that the set theoretic operations (except from forming the complement) are weaker binding. From the definition we obtain that $K\mapsto \partial_K A$ is monotone. 
	%Note that $K \overline{A}$ is the set of $g\in G$ such that $K^{-1}g\cap \overline{A}$ is not empty. 
Note that $\partial_K A$ is the set of all elements $g\in G$ such that $K^{-1}g$ intersects both $\overline{A}$ and $\overline{A^c}$. 
\begin{lemma}\label{lem:VHNbasicrules}
	For compact subsets $K,L,A\subseteq G$ there holds 
	\begin{itemize}
		\item[(i)] $L \partial_K A \subseteq\partial_{LK}A$ and $\partial_K LA \subseteq \partial_{KL} A$.
		\item[(ii)] $LA\subseteq A\cup \partial_LA$, whenever $e_G\in L$.
	\end{itemize}
\end{lemma}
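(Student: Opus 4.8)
The plan is to reduce everything to elementary Minkowski-product algebra combined with a single topological input, namely that in a topological group the product of a compact set with a closed set is again closed. Throughout I would use the reformulation recorded just above the statement, that $g\in\partial_K A$ holds exactly when $K^{-1}g$ meets both $\overline{A}$ and $\overline{A^c}$, together with three routine facts about the Minkowski product: it is associative, it is monotone in each argument, and it satisfies $L(S\cap T)\subseteq LS\cap LT$ for arbitrary $S,T\subseteq G$. With these in hand the first inclusion of (i) is immediate: distributing $L$ across the intersection defining $\partial_K A=K\overline{A}\cap K\overline{A^c}$ and using associativity $L(K\overline{A})=(LK)\overline{A}$ gives $L\partial_K A\subseteq (LK)\overline{A}\cap(LK)\overline{A^c}=\partial_{LK}A$.

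For the second inclusion $\partial_K(LA)\subseteq\partial_{KL}A$ I would treat the two factors $K\overline{LA}$ and $K\overline{(LA)^c}$ separately. For the first, monotonicity gives $LA\subseteq L\overline{A}$, and since $L$ is compact and $\overline{A}$ is closed the set $L\overline{A}$ is closed; hence $\overline{LA}\subseteq L\overline{A}$ and so $K\overline{LA}\subseteq(KL)\overline{A}$. For the second factor the key observation is that $(LA)^c\subseteq LA^c$ whenever $L\neq\emptyset$: if $g\notin LA$ then $L^{-1}g\cap A=\emptyset$, i.e.\ $L^{-1}g\subseteq A^c$, and as $L^{-1}g$ is nonempty this forces $L^{-1}g\cap A^c\neq\emptyset$, that is $g\in LA^c$. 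Taking closures as before (again $L\overline{A^c}$ is compact-times-closed, hence closed) yields $K\overline{(LA)^c}\subseteq(KL)\overline{A^c}$, and intersecting the two factors produces $\partial_K(LA)\subseteq\partial_{KL}A$.

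Part (ii) is the most direct and I would argue it pointwise: if $g\in LA$ then $g\in L\overline{A}$ by monotonicity, and if in addition $g\notin A$, then $g\in A^c=e_G A^c\subseteq LA^c\subseteq L\overline{A^c}$ precisely because $e_G\in L$; hence $g\in L\overline{A}\cap L\overline{A^c}=\partial_L A$, which gives $LA\subseteq A\cup\partial_L A$. The only non-routine ingredient, and the step I would flag as the crux of the whole lemma, is the passage from $\overline{LA}$ to $L\overline{A}$ (and likewise from $\overline{(LA)^c}$ to $L\overline{A^c}$), which rests on the standard but genuinely topological fact that the product of a compact set and a closed set in a topological group is closed; everything else is bookkeeping with Minkowski products, the elementary $K^{-1}g$ characterization of the boundary, and the nonemptiness of $L$.
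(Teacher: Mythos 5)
Your proof is correct and takes essentially the same route as the paper: both arguments reduce the nontrivial inclusion $\partial_K (LA)\subseteq\partial_{KL}A$ to the factor-wise inclusions $\overline{LA}\subseteq L\overline{A}$ and $\overline{(LA)^c}\subseteq L\overline{A^c}$ and then intersect, with (ii) and the first part of (i) handled by the obvious Minkowski-product manipulations. The only cosmetic difference lies in the complement factor: you derive $(LA)^c\subseteq LA^c$ from nonemptiness of $L^{-1}g$ and then invoke closedness of $L\overline{A^c}$, whereas the paper fixes a single $l\in L$ and uses $\overline{(LA)^c}\subseteq\overline{(lA)^c}=l\overline{A^c}\subseteq L\overline{A^c}$, needing only that translations are homeomorphisms; your explicit appeal to the compact-times-closed fact is in any case required (and left implicit in the paper) for $\overline{LA}\subseteq L\overline{A}$.
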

\begin{proof}
	Straight forward arguments show (ii) and the first statement in (i). To see $\partial_K LA\subseteq \partial_{KL} A$ we compute $\overline{(LA)^c}\subseteq \overline{(lA)^c}=l\overline{A^c}\subseteq L\overline{A^c}$ for any $l\in L$ and obtain 
	$\partial_K LA \subseteq K\overline{LA}\cap K\overline{(LA)^c}\subseteq KL\overline{A}\cap KL\overline{A^c}=\partial_{KL} A$.
\end{proof}

%
%
%\begin{remark} \label{remark:basicrelationsoftheboundary}
%	\begin{itemize}
%		\item[(a)] For $A,B,C\subseteq G$ there holds $A(B\cap C)\subseteq (AB)\cap(AC)$ and 
%		$A(B\cup C)\subseteq (AB)\cup(AC)$.
%		\item[(b)] For $K,L,A\subseteq G$ there holds $L \partial_K A \subseteq\partial_{LK}A$.
%		\item[(c)] For $K,L,A\subseteq G$ there holds $\partial_K A\subseteq \partial_{L} A$, whenever $K\subseteq L$.
%	\end{itemize} 
%\end{remark}

%\begin{proof}
	%To show (a) let $g\in A(B\cap C)$ and note that there are $a\in A$ and $d\in B\cap C$ with $g=ad$. Thus $g\in AB$ and $g\in AC$. 
	
	%If $g\in A(B\cup C)$ then there are $a\in A$ and $d\in B\cup C$ with $g=ad$. If $d\in B$, then $g=ad\in AB$ and otherwise $g\in AC$. 
	
%	The proofs of (a) and (c) are straight forward. Furthermore the intersection formula in (a) implies (b). 
	%To show (b) note that (a) implies
%	\begin{align*}
%		L\partial_K(A)=&L(K\overline{A}\cap K \overline{A^c})\\
%			\subseteq &((L^{-1}K^{-1})A)\cap ((L^{-1}K^{-1})A^c)!!!!!\\
%			=&((KL)^{-1}A)\cap ((KL)^{-1}A^c)=\partial_{KL}(A)!!!!
%	\end{align*}
%\end{proof}

A net $(A_i)_{i\in I}$ of measurable subsets of $G$ is called \emph{finally somewhere dense}, if there is $j\in I$ such that for all $i\geq j$ the set $A_i$ is somewhere dense\footnote{$A\subseteq G$ is called \emph{somewhere dense}, if it has nonempty interior. Note that this ensures $\mu(A)>0$.}. 
%Furthermore a net $(A_i)_{i\in I}$ of subsets of $G$ we call \emph{point absorbing}, whenever for every $g\in G$ there is some $j\in I$ such that for all $i\geq j$ there is $g\in A_i$. 
A finally somewhere dense %and point absorbing 
net $(A_i)_{i\in I}$ of compact subsets of $G$ is called a \emph{Van Hove net}, if for all compact subsets $K\subseteq G$, there holds 
\begin{equation}\label{equ:VanHovecondition}
\lim_{i\in I}\frac{\mu(\partial_K A_i)}{\mu(A_i)}=0.
\end{equation}
A unimodular group is called \emph{amenable} whenever it contains a Van Hove net. 

\begin{remark}
	\begin{itemize}
		\item[(i)] A finally somewhere dense net $(A_i)_{i\in I}$ of compact sets is a Van Hove net iff for all symmetric compact sets $K$ the Van Hove condition (\ref{equ:VanHovecondition}) is satisfied. Indeed, if $K$ is an arbitrary non empty but compact set we can choose $k\in K$ and obtain $\partial_{K} A_i \subseteq \partial_{kK^{-1}K}A_i=k(\partial_{K^{-1}K}A_i)$. As $\mu(\partial_K A_i)\leq\mu(k(\partial_{K^{-1}K}A_i))=\mu(\partial_{K^{-1}K}A_i)$ holds and $K^{-1}K$ is symmetric and contains $e_G$ we obtain the claim. 
		\item[(ii)] A straight forward computation shows 
$ \partial_K A =\overline{KA}\setminus \left(\operatorname{int}\left(\bigcap_{k\in K} kA\right)\right) $
 		for $K\subseteq G$ compact and $A\subseteq G$.
		If we assume in addition $e_G\in K=K^{-1}$, then another computation gives $\partial_K A =\left((K\overline{A})\setminus\operatorname{int}(A)\right)\cup \left((K^{-1}\overline{A^c})\setminus \operatorname{int}(A^c)\right).$ 
  Thus the definitions of the $K$-boundary given above; in \cite{Tempelman}\footnote{Note that in \cite{Tempelman} the order of multiplication is inverse to our notation.}; in \cite{Schlottmann} and in \cite{fuhrmann2018irregular}, coincide, whenever $e_G\in K=K^{-1}$. As the discussed terms are monotone in $K$, we can adapt the arguments given in (i) to see that all definitions of $K$-boundary yield equivalent definitions of Van Hove nets. 
  \item[(iii)] We call a finally somewhere dense net $(A_i)_{i\in I}$ a F\o lner net, if for every $g\in G$ there holds $\lim_{i\in I}{\mu(g A_i\Delta A_i)}{\mu(A_i)}^{-1}=0,$
where $A\Delta B:=(A\setminus B)\cup (B\setminus A)$ is the \emph{symmetric difference} of  $A,B\subseteq G$. F\o lner nets are called ''left ergodic nets'' in \cite{Tempelman}.
%As by Proposition \ref{pro:Kboundary} our definition of $K$-boundary agrees with the definition given in \cite{Tempelman}, 
%We obtain the link between F\o lner nets and Van Hove nets from \cite[Appendix; (3.K)]{Tempelman} as follows. 
%\begin{center}%\label{pro:FolnerandVH}
	A net $(A_i)_{i\in I}$ is a Van Hove net, iff it is a F\o lner net and satisfies 
	$\lim_{i\in I} {\mu(\partial_W A_i)}{\mu(A_i)}^{-1}=0$
	for some %open 
	neighbourhood  $W$ of $e_G$ as presented in \cite[Appendix; (3.K)]{Tempelman}.
%\end{center}
	From this we obtain that every Van Hove net is a F\o lner net and that the notions of Van Hove and F\o lner nets are equivalent for discrete groups. Note that our definition of $K$-boundary and of Van Hove nets is inspired from \cite{Krieger}, where it is used to define F\o lner nets in discrete amenable groups. In \cite[Appendix; Example 3.4]{Tempelman} a F\o lner net in $\mathbb{R}^d$ is presented, that is not a Van Hove net. 
	
\item[(iv)] It is shown for $\sigma$-compact locally compact groups in \cite[Appendix 3.L]{Tempelman} and for second countable unimodular groups in \cite[Lemma 2.7]{pogorzelski2016banach} that the existence of Van Hove sequences is equivalent to the existence of F\o lner sequences. The corresponding arguments generalize to unimodular groups without countability assumptions if we consider nets instead of sequences. For further equivalent notions of amenability we recommend the monographs \cite{Pier,paterson2000amenability}. 
\end{itemize}
\end{remark}

\begin{proposition}\label{pro:VanHovenets+K}%\label
		Let $K,C\subseteq G$ be compact sets and $(A_i)_{i\in I}$ be a Van Hove net in $G$. Then $(K A_i)_{i\in I}$ and $(CA_i)_{i\in I}$ are Van Hove nets and satisfy $\lim_{i\in I}\frac{\mu(KA_i)}{\mu(CA_i)}=1$. 
	\end{proposition}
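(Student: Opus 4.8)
The plan is to verify the Van Hove property for $(KA_i)_{i\in I}$ and $(CA_i)_{i\in I}$ directly, and then to establish separately that $\mu(KA_i)/\mu(A_i)\to 1$ and $\mu(CA_i)/\mu(A_i)\to 1$, from which the claimed ratio follows immediately. Throughout I may assume $K$ and $C$ are nonempty, since otherwise the products are empty and the statement is vacuous.

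First I would check that $(KA_i)_{i\in I}$ is a Van Hove net (the argument for $(CA_i)_{i\in I}$ being identical). Each $KA_i$ is compact, being the image of $K\times A_i$ under the continuous multiplication map. Fixing some $k\in K$, the translate $kA_i\subseteq KA_i$ has nonempty interior once $A_i$ is somewhere dense, so $(KA_i)_{i\in I}$ is finally somewhere dense; moreover $\mu(KA_i)\geq\mu(kA_i)=\mu(A_i)$ by left invariance of $\mu$. For the Van Hove condition, let $L\subseteq G$ be compact. Lemma \ref{lem:VHNbasicrules}(i) gives $\partial_L KA_i\subseteq\partial_{LK}A_i$, and since $LK$ is compact I can estimate
\[
\frac{\mu(\partial_L KA_i)}{\mu(KA_i)}\leq\frac{\mu(\partial_{LK}A_i)}{\mu(A_i)},
\]
whose right-hand side tends to $0$ because $(A_i)_{i\in I}$ is a Van Hove net; this yields (\ref{equ:VanHovecondition}) for $(KA_i)_{i\in I}$.

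The substantive step, and the part I expect to be the main obstacle, is the asymptotic $\mu(KA_i)/\mu(A_i)\to 1$; everything else is bookkeeping. Here I would first reduce to the case $e_G\in K$: choosing $k_0\in K$ and setting $K_0:=k_0^{-1}K$, left invariance gives $\mu(KA_i)=\mu(K_0A_i)$ while $e_G\in K_0$. Now $e_G\in K_0$ forces $A_i\subseteq K_0A_i$, hence $\mu(A_i)\leq\mu(K_0A_i)$, and Lemma \ref{lem:VHNbasicrules}(ii) gives $K_0A_i\subseteq A_i\cup\partial_{K_0}A_i$, so $\mu(K_0A_i)\leq\mu(A_i)+\mu(\partial_{K_0}A_i)$ by subadditivity of $\mu$. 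Dividing by $\mu(A_i)$ yields
\[
1\leq\frac{\mu(KA_i)}{\mu(A_i)}\leq 1+\frac{\mu(\partial_{K_0}A_i)}{\mu(A_i)},
\]
and the rightmost term tends to $0$ since $K_0$ is compact and $(A_i)_{i\in I}$ is a Van Hove net; thus $\mu(KA_i)/\mu(A_i)\to 1$, and the same argument applies to $C$.

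Finally I would combine the two limits: writing $\mu(KA_i)/\mu(CA_i)=\bigl(\mu(KA_i)/\mu(A_i)\bigr)\cdot\bigl(\mu(A_i)/\mu(CA_i)\bigr)$ and noting both factors converge to $1$ (all denominators are positive and finite, as the $A_i$ are eventually somewhere dense and compact), the quotient converges to $1$. The conceptual point underlying the whole argument is that the Van Hove condition is precisely what controls the excess mass produced by thickening $A_i$ on the left by a fixed compact set, the relevant containment being supplied by Lemma \ref{lem:VHNbasicrules}(ii).
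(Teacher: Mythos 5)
Your proof is correct and follows essentially the same route as the paper: the Van Hove property via Lemma \ref{lem:VHNbasicrules}(i) with the estimate $\mu(\partial_L KA_i)/\mu(KA_i)\leq \mu(\partial_{LK}A_i)/\mu(A_i)$, and the ratio limit via translating $K$ to contain $e_G$ (your $K_0=k_0^{-1}K$ is the paper's multiplication by $k\in K^{-1}$) together with Lemma \ref{lem:VHNbasicrules}(ii). Your splitting of the ratio into two limits against $\mu(A_i)$ is just the paper's reduction to $C=\{e_G\}$ made explicit.
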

	
	\begin{proof} It clearly suffices to consider the case $C=\{e_G\}$. Let $L\subseteq G$ be compact. %Note that for every $K'\subseteq G$ compact and every $i\in I$ there holds $\overline{(KA_i)^c}\subseteq \overline{(kA_i)^c}=k\overline{A_i^c}\subseteq K\overline{A_i^c}$ for any $k\in K$, hence
		%\[\partial_{K'} (KA_i)=K'\overline{KA_i}\cap K'\overline{(KA_i)^c}\subseteq K'K\overline{A_i}\cap K'K\overline{A_i^c}=\partial_{K'K}A_i.\]
	As $LK$ is compact, we obtain $(KA_i)_{i\in I}$ to be a Van Hove net from
		\[0\leq \frac{\mu(\partial_{L} KA_i)}{\mu(KA_i)}\leq \frac{\mu(\partial_{LK}A_i)}{\mu(A_i)}\overset{i\in I}{\rightarrow} 0.\]
	%This proofs $(KA_i)_{i\in I}$ to be a Van Hove net. 	
	To show $\lim_{i\in I}\frac{\mu(KA_i)}{\mu(A_i)}=1$ let $k\in K^{-1}$ and note that %$e_g\in kK$. 
	% for every $i\in I$ there holds
	%\[kKA_i =A_i \cup (kKA_i\cap A_i^c)
	%\subseteq A_i \cup (kK\overline{A_i}\cap kK\overline{A_i^c})=A_i\cup \partial_{kK}A_i.\]
	%Hence %by Lemma \ref{lem:VHNbasicrules}(ii)
	$kKA_i\subseteq A_i\cup \partial_{kK} A_i$ implies 
	\[1\leq \frac{\mu(KA_i)}{\mu(A_i)}=\frac{\mu(kKA_i)}{\mu(A_i)}\leq 1+\frac{\mu(\partial_{kK}A_i)}{\mu(A_i)}\overset{i\in I}{\rightarrow}1.\]
%	Similarly one obtains $(CA_i)_{i\in I}$ to be a Van Hove net and $\lim \frac{\mu(A_i)}{\mu(CA_i)}=1$, hence
%	\[\lim_{i \in I}\frac{\mu(KA_{i})}{\mu(CA_{i})} =\left(\lim_{j\in I}\frac{\mu(KA_{j})}{\mu(A_{j})}\right)\left(\lim_{i\in I}\frac{\mu(A_{i})}{\mu(CA_{i})} \right)=1.\]
%	
	\end{proof}

%Lemma {lem:rescaleingVHN}!!!

\subsection{Uniform lattices in locally compact topological groups}\label{sub:Uniformlattices}
Let $G$ be a locally compact topological group. A discrete subgroup $\Lambda\subseteq G$ is called a \emph{uniform lattice}, whenever it is \emph{co-compact}, i.e.\ whenever $G\big/\Lambda$ is compact. 
A \emph{fundamental domain} of $\Lambda$ is a subset $C\subseteq G$ such that each element of $G$ can be written in a unique way as $g=cz$ with $c\in C$ and $z \in \Lambda$. 

%A discrete subgroup $\Lambda\subseteq G$ is called a \emph{uniform lattice}, if there is a pre-compact
%and Borel measurable $C$ that contains $e_G$ and satisfies $0<\mu(C)$ such that each $g\in G$ can be written uniquely as $g=cz$ with $c\in C$ and $z\in \Lambda$. The set $C$ is called a \emph{fundamental domain} for $\Lambda$ and satisfies $0<\mu(C)\leq \mu(\overline{C})<\infty$. 

\begin{remark} \label{remark:measures of Lambda and G}
	\begin{itemize}
	\item[(i)] Every uniform lattice allows the choice of a pre-compact\footnote{A subset $A$ of a topological space $X$ is called \emph{pre-compact}, whenever the closure $\overline{A}$ is compact in $X$.} 
 and Borel measurable fundamental domain $C$ such that $e_G\in C$ and such that $0<\mu(C)\leq \mu(\overline{C})<\infty$. Whenever we consider a fundamental domain we will assume such a choice. To see that such a choice is possible consider a pre-compact and open neighbourhood of $e_G$ such that $U^{-1}U\cap \Lambda=\{e_G\}$ and a compact set $K\subseteq G$ such that $K\Lambda=G$. Without lost of generality we assume $e_G\in K$. Let furthermore $(k_n)_{n=1}^N$ be a finite sequence in $K$ such that $k_1=e_G$ and such that $\bigcup_{n=1}^N (k_nU)\supseteq K$. Then 
 \[C:=\bigcup_{n=1}^N \left[(k_nU)\setminus\left(\bigcup_{i<n}k_iU\Lambda\right)\right]\]
satisfies the considered properties. %Note that we do not need countability assumptions for these arguments. 
%If $G$ is a  $\sigma$-compact locally compact abelian group, then also the reverse holds true and every cocompact discrete subgroup is a uniform lattice. Indeed, in order to construct a Fundamental domain $F$ for $\Lambda$, let $K\subseteq G$ be compact such that $K+\Lambda=G$ and choose a precompact and open neighbourhood $U$ of $0$ such that $(U+U)\cap \Lambda=\{0\}$. As $G$ is $\sigma$-compact there is a sequence $(g_n)_{n\in \mathbb{N}}$ in G such that $G=\bigcup_{n\in \mathbb{N}} g_n+U$. Choose sequences $(k_n)_{n\in \mathbb{N}}$ and $(k_n)_{n\in \mathbb{N}}$ in $K$ and $\Lambda$ respectively such that $g_n=k_n+l_n$. Now a straight forward argument shows  \[F:=\bigcup_{n\in \mathbb{N}} \left( (k_n+U)\setminus\bigcup_{m<n}(k_m+U+\Lambda)\right)\]
%to be the required fundamental domain. 

	\item[(ii)] % Our definition of uniform lattices implies $\Lambda$ to be \emph{cocompact}, i.e.\ the quotient $G\big / \Lambda$ to be compact. Thus 
	By \cite[Theorem 9.1.6]{deitmar2014principles} every locally compact group that contains a uniform lattice is unimodular.
If $F\subseteq \Lambda$ is finite, then the finite union $\bigcup_{z\in F} Cz$ is disjoint and measurable. Thus by the right invariance of the Haar measure there holds \[\mu(CF)=\sum_{z\in F} \mu(Cz)=\mu(C)|F|.\] 

\end{itemize}

\end{remark}

\begin{example} \label{exa:ExistenceofLattices}
	\begin{itemize}
		%\item[(i)] Every compact group $G$ contains the countable uniform lattice $\{e_G\}$ with fundamental domain $G$. 
		%\item[(ii)] The Euklidean space $\mathbb{R}^d$ contains the countable uniform lattice $\mathbb{Z}^d$ with fundamental domain $[0,1)^d$. 
		\item[(i)] If $H$ is a compact abelian group and $a,b\in \mathbb{N}$, then $\mathbb{R}^a\times \mathbb{Z}^b\times H$ contains the countable uniform lattice $\mathbb{Z}^{a+b}\times \{e_H\}$ with fundamental domain $[0,1)^a\times \{0\}^b\times H$. Note that up to isomorphism these are all compactly generated  locally compact abelian groups $G$. \cite[Theorem 9.8]{HewittandRoss}.
		\item[(ii)] The \emph{Heisenberg group} 
			$H_3(\mathbb{R}):=\left\{\begin{pmatrix}
1 & a & c \\
0 & 1 & b \\
0 & 0 & 1 
\end{pmatrix} ;\, a,b,c\in \mathbb{R} \right\}$
under matrix multiplication is a non-abelian amenable group and contains the uniform lattice $H_3(\mathbb{Z})$ with fundamental domain $H_3([0,1))$. For reference see \cite[Example 2.13]{eisner2015operator} and \cite[Exercise 1.2.4]{runde2004lectures}.
	\end{itemize}
\end{example}

\section{The Ornstein-Weiss lemma}\label{sec:Ornstein-Weiss groups}

	In the introduction we defined what it means that a group satisfies the Ornstein-Weiss lemma.
	%For abbreviation we call groups that satisfy the Ornstein-Weiss lemma \emph{Ornstein-Weiss groups}.  	
	From \cite[Theorem 1.1.]{Krieger} or \cite[Theorem 1.1]{FeketesLemma} we know that every discrete amenable group satisfies the Ornstein-Weiss lemma. Thus whenever a group $G$ contains a uniform lattice $\Lambda$, we know that the Ornstein-Weiss lemma holds in $\Lambda$. After a short remark on the history of the Ornstein-Weiss lemma we show that this implies that the Ornstein-Weiss lemma holds for $G$ as well, i.e. Theorem \ref{theint:OrnsteinWeisslemma}.

	\begin{remark}\label{rem:Ornsteinweissgroups}
 For the origins of the ideas of a proof of the Ornstein-Weiss lemma in countable amenable groups see \cite{OrnsteinandWeiss,ward1992abramov} and in particular \cite[1.3.1]{gromov1999topological}. These ideas are furthermore considered in \cite{HuangandYeandZhang,lindenstrauss2000mean,downarowicz2019tilings}. The corresponding arguments are worked out in detail for discrete amenable groups and even for discrete amenable semigroups in \cite{Krieger,FeketesLemma,Yan}. In the last part of the arguments presented in \cite{gromov1999topological} one uses that $\sup_{D\in \mathcal{K}(G)}f(D)/\mu(D)<\infty$ and in particular that this boundedness holds for $D\in \mathcal{K}(G)$ with small Haar measure, which are ''spread out'' a lot over $G$. See \cite{Krieger} for detail. In the discrete case $f(M)\leq |M|f(\{e_G\})$ follows easily from the right invariance and the subadditivity. Note that adding this relative boundedness to the assumptions on $f$ it is shown in \cite{pogorzelski2016banach} that a modified version of the Ornstein-Weiss lemma remains valid also for the groups considered in our context. However this boundedness assumption is not satisfied for the functions which are considered in the definition of topological entropy as we discuss in Remark \ref{rem:boundednessnotgiven}(iv) below. 
	%This gap in the theory is the main motivation to include a proof of the Ornstein Weiss lemma suitable for our purposes into this paper.  	
	%	We will see in Remark \ref{rem:topologicalconjugacy}(iv) that this assumption is already to strong for actions of $\mathbb{R}$ and we can not use this version of the Ornstein Weiss lemma obtained in \cite{pogorzelski2016banach}. As we are mainly interested in a version for $\mathbb{R}^d$ we will use that the Ornstein Weiss lemma holds for the uniform lattice $\mathbb{Z}^d$ and extrapolate from this a version of this lemma for $\mathbb{R}^d$. In order to give these arguments we will add  monotonicity to the assumptions on the considered set valued functions, which is trivially fulfilled in the context of defining entropy. These arguments also yield that the averages along Van Hove nets in $\mathbb{R}^d$ can be seen already through averages along Van Hove nets in $\mathbb{Z}^d$. This will give us in particular the chance of recycling well known results on topological entropy for discrete groups which we will carefully do. As these arguments generalize to locally compact amenable unimodular (not necessarily commutative) groups that contain a uniform lattice we will present them in this generality in Section  \ref{sec:Ornstein-Weiss groups}. Using the lattice structure of a cut and project scheme we will furthermore see that for all $\sigma$-compact locally compact abelian groups, which allow cut and project schemes over $G$, the Ornstein Weiss lemma remains valid.  
%\end{itemize}
	\end{remark}
	
		\subsection{Extrapolation from a uniform lattice}
	% This includes the Heisenberg-group, the additive group of $p$-adic numbers and all compactly generated locally compact abelian groups. 
	
	We will first construct Van Hove nets in a uniform lattice $\Lambda\subseteq G$ from Van Hove nets in $G$ with properties that allow to extract the validity of the Ornstein-Weiss lemma from the lattice. 
 
 \begin{lemma} \label{lem:constructingFhatandFcheck}\label{pro:VanHovenetsinLambda}
	Let $G$ be an amenable group and $\Lambda$ be a uniform lattice in $G$ with fundamental domain $C$. Then for every Van Hove net $(A_i)_{i\in I}$ in $G$ there exist Van Hove nets $(\check{F}_i)_{i \in I}$ and $(\hat{F}_i)_{i\in I}$ in $\Lambda$ that satisfy
\begin{itemize}
	\item[(i)] $C\check{F}_i\subseteq A_i\subseteq C\hat{F}_i$ for all $i\in I$ and
	\item[(ii)] $\lim_{i\in I}\frac{|\hat{F}_i|}{|\check{F}_i|}=1.$
\end{itemize} 	
 \end{lemma}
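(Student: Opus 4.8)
The plan is to read $\check F_i$ and $\hat F_i$ off the tiling $G=\bigsqcup_{z\in\Lambda}Cz$ determined by the fundamental domain. I would set
\[\hat F_i:=\{z\in\Lambda;\ Cz\cap A_i\neq\emptyset\},\qquad \check F_i:=\{z\in\Lambda;\ Cz\subseteq A_i\},\]
i.e.\ the tiles meeting $A_i$ and the tiles swallowed by $A_i$. Both are finite, since $Cz\cap A_i\neq\emptyset$ forces $z\in\overline C^{-1}A_i$, a compact set meeting the discrete group $\Lambda$ only finitely often, and clearly $\check F_i\subseteq\hat F_i$. Because every point of $G$ lies in exactly one tile, a point of $A_i$ lies in a tile meeting $A_i$, whence $A_i\subseteq C\hat F_i$, while $C\check F_i$ is a union of tiles contained in $A_i$, whence $C\check F_i\subseteq A_i$; this is (i). Feeding these inclusions into the identity $\mu(CF)=\mu(C)|F|$ from Remark \ref{remark:measures of Lambda and G}(ii) gives the two-sided estimate $\mu(C)|\check F_i|\leq\mu(A_i)\leq\mu(C)|\hat F_i|$, which I will use repeatedly.

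The crux is a comparison of boundaries in $\Lambda$ with boundaries in $G$. For a finite $E\subseteq\Lambda$ I claim that the compact set $K_E:=\overline C\,E\,\overline C^{-1}$ satisfies $C\,\partial_E\hat F_i\subseteq\partial_{K_E}A_i$ and $C\,\partial_E\check F_i\subseteq\partial_{K_E}A_i$, where the boundaries on the left are taken inside the discrete group $\Lambda$ (so all closures there are trivial). To verify this for $\hat F_i$, take $z\in\partial_E\hat F_i$; by the description of the boundary recalled before Lemma \ref{lem:VHNbasicrules}, the set $E^{-1}z$ meets both $\hat F_i$ and $\Lambda\setminus\hat F_i$, so there are $e_1,e_2\in E$ with a point $a\in Ce_1^{-1}z\cap A_i$ and a whole tile $Ce_2^{-1}z\subseteq A_i^c$. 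For any $g=cz\in Cz$ one then writes $g=(c\,e_1\,c_a^{-1})\,a$ and $g=(c\,e_2\,c_b^{-1})\,b$ with $a\in\overline{A_i}$, $b\in\overline{A_i^c}$ and both prefactors in $K_E$; hence $g\in K_E\overline{A_i}\cap K_E\overline{A_i^c}=\partial_{K_E}A_i$. The computation for $\check F_i$ is identical with the roles of ``meets'' and ``is contained in'' swapped, and a simpler variant of it yields $C(\hat F_i\setminus\check F_i)\subseteq\partial_{K_0}A_i$ for $K_0:=\overline C\,\overline C^{-1}$.

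With these inclusions in hand the conclusions are bookkeeping. Taking Haar measure and using $\mu(CF)=\mu(C)|F|$ turns $C\,\partial_E\hat F_i\subseteq\partial_{K_E}A_i$ into $\mu(C)|\partial_E\hat F_i|\leq\mu(\partial_{K_E}A_i)$; dividing by $|\hat F_i|\geq\mu(A_i)/\mu(C)$ and invoking the Van Hove condition $\mu(\partial_{K_E}A_i)/\mu(A_i)\to0$ shows $|\partial_E\hat F_i|/|\hat F_i|\to0$, so $(\hat F_i)$ is a Van Hove net in $\Lambda$; the same step with the lower bound $|\check F_i|\geq(\mu(A_i)-\mu(\partial_{K_0}A_i))/\mu(C)$ (coming from $|\check F_i|=|\hat F_i|-|\hat F_i\setminus\check F_i|$) handles $(\check F_i)$. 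For (ii), the estimate $\mu(C)(|\hat F_i|-|\check F_i|)=\mu(C(\hat F_i\setminus\check F_i))\leq\mu(\partial_{K_0}A_i)$ combined with that lower bound gives
\[1\leq\frac{|\hat F_i|}{|\check F_i|}\leq 1+\frac{\mu(\partial_{K_0}A_i)/\mu(A_i)}{1-\mu(\partial_{K_0}A_i)/\mu(A_i)}\overset{i\in I}{\rightarrow}1.\]
The same lower bound shows $|\check F_i|>0$ eventually, so both nets are finally somewhere dense, since nonempty finite subsets of the discrete group $\Lambda$ automatically have nonempty interior.

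I expect the boundary comparison $C\,\partial_E\hat F_i\subseteq\partial_{K_E}A_i$ to be the only genuinely delicate point: left translation by $E$ does not respect the right-tile decomposition $G=\bigsqcup_{z\in\Lambda}Cz$, and the art is to absorb this mismatch into the single compact factor $K_E=\overline C\,E\,\overline C^{-1}$ and to choose the representatives $c_a,c_b\in C$ correctly. Once that inclusion is established, everything else reduces to the identity $\mu(CF)=\mu(C)|F|$ and the defining limit of a Van Hove net.
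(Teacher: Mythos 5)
Your proof is correct, and its skeleton coincides with the paper's: the same tile sets $\check F_i=\{z\in\Lambda;\,Cz\subseteq A_i\}$ and $\hat F_i=\{z\in\Lambda;\,Cz\cap A_i\neq\emptyset\}$, the same immediate verification of (i), and the same bookkeeping via $\mu(CF)=\mu(C)|F|$. The genuine difference is in how the $\Lambda$-boundaries are compared with boundaries in $G$. The paper routes everything through the thickened sets $\check A_i:=C\check F_i$ and $\hat A_i:=C\hat F_i$: it first shows $\overline{\hat A_i}\cap\overline{\check A_i^c}\subseteq\partial_K A_i$ with $K=\overline{CC^{-1}}$, extracts from this both (ii) and the estimate $\partial_L\hat A_i\cup\partial_L\check A_i\subseteq\partial_{LK}A_i$, and then transfers the discrete boundary via $\partial_F^\Lambda\check F_i\subseteq\partial_F^G\check A_i$ combined with Lemma \ref{lem:VHNbasicrules}; since it normalizes by $\mu(\check A_i)$ rather than $\mu(A_i)$, it also needs the eventual bound $\mu(A_i)\leq 2\mu(\check A_i)$, which produces the factor $2$ in its final estimate. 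You bypass the thickened sets entirely and prove the single inclusion $C\,\partial_E^\Lambda\hat F_i\subseteq\partial_{K_E}A_i$ (with $K_E=\overline{C}E\overline{C}^{-1}$, together with its $\check F$- and $\hat F\setminus\check F$-analogues) by direct factorization: a point $g=cz$ of a boundary tile is written as $g=(ce_1c_a^{-1})a$ with $a\in A_i$ and as $g=(ce_2c_b^{-1})b$ with $b\in A_i^c$, the prefactors being absorbed into $K_E$ — and your factorizations are stated in the correct order for non-abelian $G$. Your route is more elementary and avoids both the intermediate Van Hove-type estimates for $\check A_i,\hat A_i$ and the factor of $2$; the price is that the element-chasing argument is repeated in three variants, whereas the paper's inclusions are established once and then reused for (ii) and for the Van Hove property. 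Your handling of the side conditions (finiteness of $\hat F_i$ from the closedness of the discrete subgroup $\Lambda$, and final somewhere-density from the eventual positivity of $|\check F_i|$) is also complete, so both arguments constitute full proofs.
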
 
 
 \begin{proof}
 	For $i\in I$ let $(A_i)_{i\in I}$ be a Van Hove net in $G$. Set  
 	$\check{F}_i:=\{z\in \Lambda;\, Cz\subseteq A_i\}$ %=\Lambda\setminus \left(C^{-1}A_i^c\right)$
 	and 
 	$\hat{F}_i:=\{z\in \Lambda;\, Cz\cap A_i\neq \emptyset\}.$ %=\Lambda\cap C^{-1}A_i.$
 	Then (i) follows directly from these definitions. In order to simplify the notation let $\check{A}_i:=C\check{F}_i$ and $\hat{A}_i:=C\hat{F}_i$ for $i\in I$. Let furthermore $K:=\overline{CC^{-1}}$ and note that $e_G \in K=K^{-1}$. The complements and boundaries in this proof are taken with respect to $G$ unless  otherwise mentioned. 
	For $z\in \Lambda$ we know that $z\notin C^{-1}A_i^c$ is equivalent to $Cz\cap A_i^c= \emptyset$ and thus obtain $\check{F}_i=\Lambda\setminus \left(C^{-1}A_i^c\right)$. A similar argument yields $\hat{F}_i=\Lambda\cap C^{-1}A_i.$

 	Before we show that $(\check{F}_i)_{i\in I}$ and $(\hat{F}_i)_{i\in I}$ are  Van Hove nets in $\Lambda$ we will show that the ratio of their cardinalities tends to $1$, i.e. (ii). We compute $\hat{A}_i=C(\Lambda\cap C^{-1}A_i)\subseteq C\Lambda\cap KA_i=KA_i$
	and similarly
	$(\check{A_i})^c=C(\Lambda\setminus\check{F}_i)=C(\Lambda\cap C^{-1}A_i^c)\subseteq% C\Lambda\cap KA_i^c=
	KA_i^c,$ from which we obtain $\overline{\hat{A}_i}\cap\overline{\check{A}_i^c}\subseteq K\overline{A_i}\cap K\overline{A_i^c}=\partial_K A_i$. Thus for all $i\in I$ there holds 
\begin{equation}\label{equ:blablabla}
 \overline{\hat{A}_i}\subseteq \partial_{K}A_i\cup \check{A}_i \text{ and } \overline{\check{A}_i^c}\subseteq \partial_{K}A_i\cup \hat{A}_i^c.
\end{equation}
We will need both inclusions later in this proof, but for now obtain from the first one that 
	$A_i\subseteq {\hat{A}_i}\subseteq \check{A}_i \cup \partial_K A_i.$
	Hence $\mu(\hat{A}_i)\leq \mu(\check{A}_i)+\mu(\partial_{K}A_i)$ and $\mu(A_i)-\mu(\partial_{K}A_i)\leq \mu(\check{A}_i)$. As $\lim_{i\in I}\frac{\mu(A_i)} {\mu(\partial_{K}A_i)}=\infty$ we get (ii) from the computation
	\begin{align*}
		1\leq \frac{|\hat{F}_i|}{|\check{F}_i|}&= \frac{\mu(\hat{A}_i)}{\mu(\check{A}_i)}		
		\leq \frac{\mu(\check{A}_i)+\mu(\partial_{K}A_i)}{\mu(\check{A_i})}
		=1+\frac{\mu(\partial_{K}A_i)}{\mu(\check{A_i})}\\
		&\leq 1+\frac{\mu(\partial_{K}A_i)}{\mu(A_i)-\mu(\partial_{K}A_i)}
		=1+\frac{1}{\frac{\mu(A_i)}{\mu(\partial_{K}A_i)}-1}.
	\end{align*}
We will finish the proof by showing that $(\check{F}_i)_{i\in I}$ and $(\hat{F}_i)_{i\in I}$ are Van Hove nets. To do this consider a finite set $F\in \Lambda$ and set $L:=\overline{CF}$. From (ii) we know the existence of $j\in I$ such that for all $i\geq j$ there holds
	\[1\leq \frac{\mu(\hat{A}_i)}{\mu(\check{A}_i)}=\frac{|\hat{F}_i|}{|\check{F}_i|}\leq 2.\]
	Hence $\mu(\check{A}_i)\leq \mu(A_i)\leq \mu(\hat{A_i})\leq 2 \mu(\check{A}_i)$.
 From (\ref{equ:blablabla}) and $\check{A}_i\subseteq \overline{A_i}$ we obtain furthermore 
	\[L\overline{\hat{A_i}}\subseteq L(\partial_K A_i\cup \overline{A_i})
		\subseteq L\partial_K A_i\cup L\overline{A_i}
		\subseteq \partial_{LK}A_i\cup L\overline{ A_i}\]
and analogously $ L\overline{\check{A_i}^c}\subseteq \partial_{LK} A_i\cup L \overline{A_i^c}.$ 
	 As there also holds 
$
	L\overline{\hat{A_i}^c}\subseteq L \overline{A_i^c} \subseteq \partial_{LK}A_i\cup L \overline{A_i^c}
$
and similarly $L\overline{\check{A_i}}\subseteq \partial_{LK}A_i\cup L \overline{A_i}$ 
%\begin{align*}
%	L\overline{\check{A_i}}\subseteq L \overline{A_i} \subseteq (\partial_{LK}(A_i))\cup( L \overline{A_i}).
%\end{align*}
we obtain from $L\subseteq LK$ that
\begin{align*}
	\partial_L\hat{A_i}\cup \partial_L\check{A_i}&=\left(L  \overline{\hat{A_i}} \cap L \overline{\hat{A_i}^c} \right) \cup \left(L  \overline{\check{A_i}} \cap L \overline{\check{A_i}^c} \right)\\
		&\subseteq \left(\partial_{LK}A_i\cup L \overline{A_i}\right) \cap \left(\partial_{LK}A_i\cup L \overline{A_i^c}\right) \\
		&=\partial_{LK}A_i\cup \left(L \overline{A_i} \cap  L \overline{A_i^c}\right) 
		%&=\partial_{LK}A_i\cup \partial_{L}A_i
		=\partial_{LK}A_i.
\end{align*}
 We thus obtain for all $i\geq j$ that
	 \begin{equation}\label{equ:blabla2}
	 	\frac{\mu(\partial_L \hat{A}_i)}{\mu(\hat{A}_i)}\leq \frac{\mu(\partial_{LK}A_i)}{\mu(A_i)} \text{ and }\frac{\mu(\partial_L \check{A}_i)}{\mu(\check{A}_i)}\leq 2\frac{\mu(\partial_{LK}A_i)}{\mu(A_i)}.
	 \end{equation}
	 Denoting the $F$-boundary taken of a subset $E$ with respect to $\Lambda$ or $G$ by $\partial_F^\Lambda E$ or $\partial_F^G E$, respectively, we use that $C$ is a fundamental domain to get 
\begin{align*}
	\partial^\Lambda_F \check{F}_i=& F\check{F}_i\cap F(\Lambda\setminus \check{F}_i)
	\subseteq FC\check{F}_i\cap FC(\Lambda\setminus \check{F}_i)\\
	=&FC\check{F}_i\cap F(G\setminus C\check{F}_i)
	\subseteq  F\overline{C\check{F}_i}\cap F\overline{G\setminus C\check{F}_i}
	= \partial^G_F \check{A}_i,
\end{align*}
hence $C \partial^\Lambda_{F}(\check{F}_i)\subseteq C \partial^G_{F}\check{A}_i\subseteq \partial^G_{CF}\check{A}_i
 	\subseteq \partial^G_{L}\check{A}_i.$ %Similarly we obtain  $C\partial^\Lambda_F\hat{F}_i\subseteq \partial^G_L\hat{A}_i$. 
 Thus, considering (\ref{equ:blabla2}) we compute
 	\begin{align*}
 		0\leq\frac{|\partial_F^\Lambda \check{F}_i|}{|\check{F}_i|}
 		%=\frac{|\partial_F^\Lambda \check{F}_n|\mu(C)}{|\check{F}_n|\mu(C)}
 		=\frac{\mu(C \partial_F^\Lambda \check{F}_i)}{\mu(\check{A}_i)} 
 		\leq 
 		%\frac{\mu(\partial_{L}^G (C\check{F}_i))}{\mu(C\check{F}_i)}=
 		\frac{\mu(\partial_{L}^G \check{A}_i)}{\mu(\check{A}_i)}
 		\leq 2\frac{\mu(\partial_{LK}^G {A}_i)}{\mu({A}_i)}.
 	\end{align*}
 As $LK$ is compact and $(A_i)_{i\in I}$ is a Van Hove net in $G$ we obtain $(\check{F}_{i})_{i\in I}$ to be a Van Hove net in $\Lambda$. Similarly one shows $(\hat{F}_i)_{i \in I}$ to be a Van Hove net.% 
 \end{proof}

%	The following shows all groups in Example \ref{exa:ExistenceofLattices} to be Ornstein-Weiss groups. 

	From the next theorem we obtain that every amenable group that contains a uniform lattice satisfies the Ornstein-Weiss lemma, i.e.\ the statement of Theorem \ref{theint:OrnsteinWeisslemma}. 
	
	\begin{theorem}\label{the:OrnsteinWeisslemma}%\label{rem:restrictionof_h_tolattice_OW_Formula}
	If $f\colon \mathcal{K}(G)\to \mathbb{R}$ is a subadditive, right invariant and monotone function and $(A_i)_{i\in I}$ is a Van Hove net in $G$, then  
	\[\lim_{i\in I}\frac{f(A_i)}{\mu(A_i)}=\frac{1}{\mu(C)}\lim_{j\in J}\frac{f(\overline{C} F_j)}{|F_j|}\]
	 holds for any Van Hove net $(F_j)_{j\in J}$ in a uniform lattice $\Lambda\subseteq G$ with fundamental domain \nolinebreak  $C$. 
	\end{theorem}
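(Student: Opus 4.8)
The plan is to transfer the Ornstein--Weiss lemma from the uniform lattice $\Lambda$, where it is already available by \cite{Krieger,FeketesLemma}, up to $G$, by pulling $f$ back to $\Lambda$ and squeezing $f(A_i)$ between two lattice quantities. Concretely, I would introduce the pulled-back function $\tilde f$ on the finite subsets of $\Lambda$ given by $\tilde f(F):=f(\overline C F)$, and reduce the theorem to the statement that $\lim_{j\in J}\tilde f(F_j)/|F_j|$ exists and is independent of the Van Hove net $(F_j)_{j\in J}$ in $\Lambda$, i.e.\ that $\tilde f$ obeys the (already known) discrete Ornstein--Weiss lemma.

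First I would record the measure and cardinality asymptotics. From Lemma \ref{lem:constructingFhatandFcheck} the net $(A_i)$ produces Van Hove nets $(\check F_i)_{i\in I}$ and $(\hat F_i)_{i\in I}$ in $\Lambda$ with $C\check F_i\subseteq A_i\subseteq C\hat F_i$ and $|\hat F_i|/|\check F_i|\to 1$. By Remark \ref{remark:measures of Lambda and G}(ii) one has $\mu(C\check F_i)=\mu(C)|\check F_i|$ and $\mu(C\hat F_i)=\mu(C)|\hat F_i|$, whence $\mu(C)|\check F_i|\le\mu(A_i)\le\mu(C)|\hat F_i|$ and therefore $|\check F_i|/\mu(A_i)\to\mu(C)^{-1}$ and $|\hat F_i|/\mu(A_i)\to\mu(C)^{-1}$. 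Next comes the two-sided sandwich for $f$ itself. The upper bound $f(A_i)\le f(\overline C\hat F_i)=\tilde f(\hat F_i)$ is immediate from monotonicity and $A_i\subseteq C\hat F_i\subseteq\overline C\hat F_i$. For the lower bound I would exploit that $A_i$ is compact, hence closed: from $Cz\subseteq A_i$ one gets $\overline C z=\overline{Cz}\subseteq\overline{A_i}=A_i$ for every $z\in\check F_i$, so $\overline C\check F_i\subseteq A_i$ and monotonicity yields $\tilde f(\check F_i)=f(\overline C\check F_i)\le f(A_i)$. Dividing $\tilde f(\check F_i)\le f(A_i)\le\tilde f(\hat F_i)$ by $\mu(A_i)$ and inserting the asymptotics gives
\[\frac{\tilde f(\check F_i)}{|\check F_i|}\cdot\frac{|\check F_i|}{\mu(A_i)}\le\frac{f(A_i)}{\mu(A_i)}\le\frac{\tilde f(\hat F_i)}{|\hat F_i|}\cdot\frac{|\hat F_i|}{\mu(A_i)}.\]
Once $\tilde f$ is known to satisfy the discrete Ornstein--Weiss lemma with common limit $L=\lim_{j\in J}\tilde f(F_j)/|F_j|$, the fact that $(\check F_i)$ and $(\hat F_i)$ are themselves Van Hove nets in $\Lambda$ forces $\tilde f(\check F_i)/|\check F_i|\to L$ and $\tilde f(\hat F_i)/|\hat F_i|\to L$; both outer terms then converge to $L/\mu(C)$, and the squeeze yields $\lim_{i\in I} f(A_i)/\mu(A_i)=L/\mu(C)=\mu(C)^{-1}\lim_{j\in J}\tilde f(F_j)/|F_j|$, which is the asserted formula together with its net-independence.

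The step I expect to be the main obstacle is verifying that $\tilde f$ meets the hypotheses of the discrete Ornstein--Weiss lemma. Right invariance, $\tilde f(Fz)=f(\overline C Fz)=f((\overline C F)z)=f(\overline C F)=\tilde f(F)$ for $z\in\Lambda\subseteq G$, and monotonicity are immediate, but subadditivity is delicate: for disjoint $F,F'\subseteq\Lambda$ the genuine translates $Cz$ are pairwise disjoint, yet their closures $\overline C z$ may meet along boundaries, so $\overline C F$ and $\overline C F'$ need not be disjoint and the subadditivity of $f$ cannot be applied to them directly. The heart of the argument is therefore to neutralise these boundary overlaps so as to recover $\tilde f(F\cup F')\le\tilde f(F)+\tilde f(F')$ --- for instance by approximating $\overline C$ from inside by compacta whose translates stay genuinely disjoint and invoking monotonicity, using that the overlap $\bigcup_{z\neq z'}\overline C z\cap\overline C z'$ has empty interior. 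I note that the squeeze in the second paragraph uses only monotonicity of $f$, so all of the subtlety is concentrated in this subadditivity check; once it is secured, the discrete Ornstein--Weiss lemma applies and the remaining bookkeeping is routine.
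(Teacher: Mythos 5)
Your proposal follows the paper's own proof of Theorem \ref{the:OrnsteinWeisslemma} almost verbatim: the paper also pulls $f$ back to the lattice via $f^\Lambda(F):=f(\overline{C}F)$ on $\mathcal{K}(\Lambda)$, uses Lemma \ref{lem:constructingFhatandFcheck} to produce $(\check F_i)_{i\in I}$ and $(\hat F_i)_{i\in I}$ with $\overline{C}\check F_i\subseteq A_i\subseteq\overline{C}\hat F_i$ (using that $A_i$ is closed, exactly as you do), invokes the discrete Ornstein--Weiss lemma of \cite{Krieger,FeketesLemma} for $f^\Lambda$, and squeezes, using $\mu(C\check F_i)=\mu(C)|\check F_i|$ and $\mu(C\hat F_i)=\mu(C)|\hat F_i|$ from Remark \ref{remark:measures of Lambda and G}(ii). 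So the sandwich-and-squeeze portion of your argument is precisely the paper's and is correct.

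The step you leave open---subadditivity of $f^\Lambda$---is where you and the paper part ways, and your sketched repair does not work. The paper disposes of this step in one line: for $F,F'\in\mathcal{K}(\Lambda)$,
\[
f^\Lambda(F\cup F')=f\left(\overline{C}(F\cup F')\right)\leq f\left(\overline{C}F\cup\overline{C}F'\right)\leq f^\Lambda(F)+f^\Lambda(F'),
\]
where the first inequality is monotonicity and the second applies the subadditivity of $f$ to the pair $\overline{C}F,\,\overline{C}F'$. You are right that these sets are in general not disjoint (translated boundaries of $\overline{C}$ meet), so under the paper's literal definition of subadditivity---stated only for disjoint pairs---this application is not licensed; the paper is implicitly using subadditivity for arbitrary pairs of compact sets. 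That stronger property is what every intended application satisfies (e.g.\ $A\mapsto\log\operatorname{cov}_p(\eta_A)$ is subadditive for all pairs, since $\eta_{A\cup B}=\eta_A\cap\eta_B$), and with that reading nothing delicate remains. Your own proposed fix, by contrast, cannot be completed: if you replace $\overline{C}$ by an inner compact set $K$ whose $\Lambda$-translates are genuinely disjoint, then disjoint subadditivity and monotonicity give an upper bound for $f(K(F\cup F'))$, but monotonicity only yields $f(K(F\cup F'))\leq f(\overline{C}(F\cup F'))$---an inequality in the wrong direction---and there is no regularity of $f$ available to pass from the inner approximation back up to $\overline{C}(F\cup F')$. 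So the concentration of ``all of the subtlety'' in this step is accurate, but the step must be closed by strengthening (or correctly reading) the subadditivity hypothesis, not by inner approximation. Note finally that on the lattice side nothing is lost either way: in a discrete group, disjoint subadditivity together with monotonicity already implies subadditivity for arbitrary finite sets (write $F\cup F'=F\cup(F'\setminus F)$, a disjoint union of compact sets), so the discrete Ornstein--Weiss lemma applies to $f^\Lambda$ as soon as its subadditivity is secured.
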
 
 
\begin{proof} 
Let $C$ be a fundamental domain of $\Lambda$ in $G$ and note that $\mathcal{K}(\Lambda)$ is the set of finite subsets of $\Lambda$. In order to use that every discrete amenable group satisfies the Ornstein-Weiss lemma, we define
	\[f^\Lambda\colon \mathcal{K}(\Lambda) \to \mathbb{R}; F\mapsto f\left(\overline{C}F\right).\]
	It is straight forward to see, that $f^\Lambda$ is right invariant and monotone. In order to show, that $f^\Lambda$ is subadditive let $F,F'\in \mathcal{K}(\Lambda)$. As $\overline{C}(F\cup F')\subseteq \overline{C}F \cup \overline{C}F'$ we obtain from the monotonicity and the subadditivity of $f$ that 
	\[f^\Lambda(F\cup F')\leq f(\overline{C}F \cup \overline{C}F')\leq f^\Lambda(F)+f^\Lambda(F').\] 
	
	Let now $(A_i)_{i\in I}$ be a Van Hove net in $G$ and $(F_j)_{j\in J}$ be a Van Hove net in $\Lambda$. By Lemma \ref{lem:constructingFhatandFcheck} there are Van Hove nets $(\check{F}_i)_{i\in I}$ and $(\hat{F}_i)_{i\in I}$ such that 
	$C\check{F}_i\subseteq   A_i\subseteq  C\hat{F}_i$
	 for all $i\in I$ and $\lim_{i\in I}{|\hat{F}_i|}{|\check{F}_i|}^{-1}=1$. As $A_i$ is closed, we get furthermore $\overline{C}\check{F}_i\subseteq   A_i\subseteq  \overline{C}\hat{F}_i$ and hence 
	 \[f^\Lambda(\check{F}_i)\leq f(A_i)\leq f^\Lambda(\hat{F}_i).\]
	  Note that $\Lambda$ is a discrete amenable group by Lemma \ref{lem:constructingFhatandFcheck} and thus satisfies the Ornstein-Weiss lemma. This implies the existence of the following limits and 
	  \begin{align}\label{equ:Lambdalimits}
	  \lim_{i\in I}\frac{f^\Lambda(\check{F}_i)}{|\check{F}_i|}=\lim_{i\in I}\frac{f^\Lambda(\hat{F}_i)}{|\hat{F}_i|}=\lim_{j\in J}\frac{f^\Lambda({F}_j)}{|{F}_j|}.%=\lim_{j\in J}\frac{f(\overline{C}F_j)}{|F_j|}.
	  \end{align}
	Let $\epsilon>0$. As 
	$\lim_{i\in I}{|\hat{F}_i|}{|\check{F}_i|}^{-1}=1$
	and $|\check{F}_i|\leq |\hat{F}_i|$ for all $i\in I$ 
	 there is $j\in I$, such that for all $i\geq j$ there holds $|\hat{F_i}|\leq (1+\epsilon)|\check{F}_i|$ and hence
	\[\frac{1}{(1+\epsilon)}|\hat{F}_i|\mu(C)\leq \mu(C)|\check{F}_i|=\mu(C\check{F}_i) \leq \mu(A_i)\leq \mu(C\hat{F}_i)=\mu(C)|\hat{F}_i|\leq(1+\epsilon)|\check{F}_i|\mu(C) .\]
	Thus for $j\geq i$ there holds
	\begin{align*}
		\frac{1}{(1+\epsilon)}\frac{f^\Lambda(\check{F}_i)}{|\check{F}_i|}%&=\frac{1}{(1+\epsilon)}\frac{f(C\check{F}_i)}{|\check{F}_i|}\\
		&\leq \mu(C) \frac{f(A_i)}{\mu(A_i)}%\\
		%&\leq (1+\epsilon)\frac{Cf(\hat{F_i})}{|\hat{F}_i|}\\
		\leq(1+\epsilon)\frac{f^\Lambda(\hat{F}_i)}{|\hat{F}_i|}.
	\end{align*} 
	We obtain for every $\epsilon>0$ that
	\begin{align*}
	\frac{1}{(1+\epsilon)}\lim_{i\in I}\frac{f^\Lambda(\check{F}_i)}{|\check{F}_i|} &\leq\mu(C)\liminf_{i\in I} \frac{f(A_i)}{\mu(A_i)}
	\leq \mu(C) \limsup_{i\in I}\frac{f(A_i)}{\mu(A_i)}
	\leq (1+\epsilon)\lim_{i\in I}\frac{f^\Lambda(\hat{F}_i)}{|\hat{F}_i|} .
	\end{align*}
	This shows that the limit $\mu(C)\lim_{i\in I}\frac{f(A_i)}{\mu(A_i)}$ exists and that it equals the limits in (\ref{equ:Lambdalimits}).
%	\[\lim_{i\in I}\frac{f^\Lambda(\check{F}_i)}{|\check{F}_i|}=\lim_{i\in I}\frac{f^\Lambda(\hat{F}_i)}{|\hat{F}_i|}=\lim_{j\in J}\frac{f^\Lambda({F}_j)}{|{F}_j|}\]
	%for every Van Hove net $({F}_j)_{j\in J}$. 
	In particular it does not depend on the choice of $(A_i)_{i\in I}$. 
\end{proof}

	If $(G,H,\Lambda)$ is a CPS we know that $G\times H$ contains a uniform lattice and in order to show that $G$ satisfies the Ornstein-Weiss lemma one could hope that the properties of a CPS also imply that $G$ contains a uniform lattice. The next example was already studied in \cite{meyer1972algebraic} and shows that there are CPS with a physical space that contains no uniform lattices. 

\begin{example}\label{exa:CPS}
	Consider the additive group of the $p$-adic numbers $\mathbb{Q}_p$. %Here $p$ is a prime number. 
	For reference on $p$-adic numbers see \cite[Example 2.10]{baake2016spectral} and \cite{gouvea1997p}. Denote by $\mathbb{Z}[p^{-1}]$ the smallest subring of $\mathbb{Q}_p$ (or $\mathbb{R}$) that contains $\mathbb{Z}$ and $p^{-1}$. Then $(\mathbb{Q}_p,\mathbb{R},\Lambda)$ with $\Lambda:=\{(x,x);\, x\in \mathbb{Z}[p^{-1}]\}$ is a cut and project scheme. Furthermore the only discrete subgroup of $\mathbb{Q}_p$ is $\{0\}$, which is not co-compact. Thus $\mathbb{Q}_p$ does not contain a uniform lattice. For reference on this example see \cite[Example 5.C.10(2)]{cornulier2014metric} and \cite[Chapter II.10]{meyer1972algebraic}. 
\end{example}

	Nevertheless we can obtain all physical spaces of CPS to satisfy the Ornstein-Weiss lemma from the following result, which we already stated in Theorem \ref{theint:ProductandOW}. 

\begin{theorem}\label{the:ProductandOW}
	If $G$ and $H$ are amenable groups such that $G\times H$ satisfies the Ornstein-Weiss lemma, then $G$ and $H$ satisfy the Ornstein-Weiss lemma. 
\end{theorem}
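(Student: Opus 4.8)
The plan is to lift $f$ from $\mathcal{K}(G)$ to $\mathcal{K}(G\times H)$ by slicing, apply the Ornstein--Weiss lemma on $G\times H$ to product Van Hove nets, and read off the conclusion for $G$; since the roles of $G$ and $H$ are symmetric it suffices to treat $G$. So fix a subadditive, right invariant and monotone $f\colon\mathcal{K}(G)\to\R$ together with a Van Hove net $(A_i)_{i\in I}$ in $G$, write $\mu=\mu_G$ and let $\nu$ be a Haar measure on $H$, so that $\mu\times\nu$ is a Haar measure on $G\times H$. I first record that such an $f$ is automatically nonnegative. By right invariance $f(\{g\})=f(\{e_G\})$ for all $g\in G$, and by monotonicity $f(A)\ge f(\{e_G\})$ for every nonempty $A\in\mathcal{K}(G)$. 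Choosing $g\neq e_G$, the singletons $\{e_G\}$ and $\{g\}$ are disjoint, so subadditivity and right invariance give $f(\{e_G,g\})\le f(\{e_G\})+f(\{g\})=2f(\{e_G\})$, while monotonicity gives $f(\{e_G\})\le f(\{e_G,g\})$; hence $f(\{e_G\})\le 2f(\{e_G\})$, i.e.\ $f(\{e_G\})\ge 0$, and therefore $f\ge 0$. I extend $f$ by setting $f(\emptyset):=0$.

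Next I fix a Van Hove net $(B_j)_{j\in J}$ in $H$, which exists as $H$ is amenable, and claim that $(A_i\times B_j)_{(i,j)\in I\times J}$, with the product order on $I\times J$, is a Van Hove net in $G\times H$. As the $K$-boundary is monotone in $K$, it suffices to verify the Van Hove condition for boxes $K=K_1\times K_2$ with $K_1,K_2$ compact. A direct computation with products of closures and complements gives
\[\partial_{K_1\times K_2}(A\times B)=\big(\partial_{K_1}A\times K_2\overline{B}\big)\cup\big(K_1\overline{A}\times\partial_{K_2}B\big),\]
whence $(\mu\times\nu)(\partial_{K}(A_i\times B_j))\le \mu(\partial_{K_1}A_i)\,\nu(K_2 B_j)+\mu(K_1 A_i)\,\nu(\partial_{K_2}B_j)$. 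Dividing by $\mu(A_i)\nu(B_j)$ and invoking Proposition~\ref{pro:VanHovenets+K}, the factors $\mu(K_1 A_i)/\mu(A_i)$ and $\nu(K_2 B_j)/\nu(B_j)$ tend to $1$, hence stay bounded, while $\mu(\partial_{K_1}A_i)/\mu(A_i)\to 0$ and $\nu(\partial_{K_2}B_j)/\nu(B_j)\to 0$, so the whole expression tends to $0$ along $I\times J$.

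The heart of the argument is the lift. For $E\in\mathcal{K}(G\times H)$ let $E^{h}:=\{g\in G:(g,h)\in E\}$ be its slice at $h\in H$, a compact (possibly empty) subset of $G$, and set
\[\tilde f(E):=\int^{*}_{H} f(E^{h})\,d\nu(h),\]
where $\int^{*}$ denotes the upper integral; I would use the upper integral because $h\mapsto f(E^{h})$ need not be $\nu$-measurable for an arbitrary $f$. Since $E^{h}\subseteq\pi_G(E)$ for every $h$ and $E^{h}=\emptyset$ unless $h\in\pi_H(E)$, monotonicity yields $0\le f(E^{h})\le f(\pi_G(E))\,\mathbf 1_{\pi_H(E)}(h)$, so $\tilde f(E)$ is finite. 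For disjoint $E_1,E_2$ the slices $E_1^{h},E_2^{h}$ are disjoint, so $f((E_1\cup E_2)^{h})\le f(E_1^{h})+f(E_2^{h})$ pointwise and the subadditivity of the upper integral gives $\tilde f(E_1\cup E_2)\le\tilde f(E_1)+\tilde f(E_2)$; for $E\subseteq E'$ one has $f(E^{h})\le f((E')^{h})$ pointwise (here $f\ge 0$ is used where $E^{h}=\emptyset\ne (E')^{h}$), so $\tilde f$ is monotone; and from $\big(E(g_0,h_0)\big)^{h}=E^{hh_0^{-1}}g_0$ together with the right invariance of $f$ and of $\nu$ one obtains the right invariance of $\tilde f$. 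Finally, for a box $E=A\times B$ the slice equals $A$ on $B$ and $\emptyset$ off $B$, so $h\mapsto f(E^{h})$ is measurable and $\tilde f(A\times B)=f(A)\,\nu(B)$.

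It then remains to assemble these facts. Applying the Ornstein--Weiss lemma for $G\times H$ to $\tilde f$ along $(A_i\times B_j)$, and using $\tilde f(A_i\times B_j)=f(A_i)\nu(B_j)$ with $(\mu\times\nu)(A_i\times B_j)=\mu(A_i)\nu(B_j)$, shows that
\[\lim_{(i,j)\in I\times J}\frac{\tilde f(A_i\times B_j)}{(\mu\times\nu)(A_i\times B_j)}=\lim_{(i,j)\in I\times J}\frac{f(A_i)}{\mu(A_i)}\]
exists and is finite. As the right-hand quantity does not depend on $j$, this $I\times J$-limit exists if and only if $\lim_{i\in I}f(A_i)/\mu(A_i)$ does, and then the two agree, giving existence and finiteness for $G$. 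For independence of the Van Hove net, given a second Van Hove net $(A'_{i'})_{i'\in I'}$ in $G$, both $(A_i\times B_j)$ and $(A'_{i'}\times B_j)$ are Van Hove in $G\times H$, so the lemma for $G\times H$ forces $\lim_{i\in I}f(A_i)/\mu(A_i)=\lim_{i'\in I'}f(A'_{i'})/\mu(A'_{i'})$; interchanging $G$ and $H$ handles $H$. I expect the main obstacle to be the construction and verification of $\tilde f$: the non-measurability of $h\mapsto f(E^{h})$ for general $f$, which is why the upper integral is needed, and the check that the upper integral nevertheless preserves subadditivity, monotonicity and right invariance.
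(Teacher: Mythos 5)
Your proof is correct, and it shares the paper's overall architecture: both arguments show that products of Van Hove nets are Van Hove (your identity $\partial_{K_1\times K_2}(A\times B)=(\partial_{K_1}A\times K_2\overline{B})\cup(K_1\overline{A}\times\partial_{K_2}B)$, combined with the reduction to box-shaped $K$ by monotonicity of $K\mapsto\partial_K A$, is a sharper version of Lemma \ref{lem:productVHnets}); both lift $f$ to a monotone, right invariant, subadditive function on $\mathcal{K}(G\times H)$ whose value on boxes is $f(A)\mu_H(B)$; and both then apply the Ornstein--Weiss lemma in $G\times H$ and exploit that the resulting quotient along $(A_i\times B_j)$ is constant in $j$. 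Where you genuinely diverge is the lift itself. The paper uses a covering infimum, $h(Q)=\inf\{\sum_{n=1}^N f(C_n)\mu_H(D_n):\,Q\subseteq\bigcup_{n=1}^N C_n\times D_n\}$, for which monotonicity, invariance and subadditivity are immediate, but the lower bound $h(A\times B)\geq f(A)\mu_H(B)$ costs a combinatorial argument: one refines the sets $D_n$ by a finite Borel partition of $B$ and applies subadditivity of $f$ to the covers $A\subseteq\bigcup_{n\in\mathcal{N}_m}C_n$. You instead integrate over slices, $\tilde f(E)=\int^*_H f(E^h)\,d\nu(h)$, which makes the box identity trivial but forces the preliminary steps that $f\geq 0$ (your two-singleton argument is correct) and $f(\emptyset):=0$, plus the upper integral to absorb the possible non-measurability of $h\mapsto f(E^h)$; the properties you need of the upper integral (monotonicity, subadditivity, invariance under the measure-preserving map $h\mapsto hh_0^{-1}$) are standard for nonnegative functions dominated by an integrable majorant such as $f(\pi_G(E))\mathbf{1}_{\pi_H(E)}$. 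One point genuinely in favour of your route: every application of subadditivity of $f$ in your argument is to disjoint sets (disjoint sets have disjoint slices), which is exactly the hypothesis as the paper defines it, whereas the paper's partition step invokes $f(A)\leq\sum_{n\in\mathcal{N}_m}f(C_n)$ for a possibly overlapping cover; in a non-discrete group this cover-subadditivity does not follow formally from disjoint subadditivity and monotonicity alone (for instance, on $\mathbb{R}$ the function $A\mapsto\mu(A)+\epsilon\cdot\mathbf{1}[A\text{ contains a closed unit interval}]$ is monotone, invariant and disjointly subadditive but not cover-subadditive), though it does hold for the fully subadditive functions arising in entropy theory. In short, the paper's covering infimum buys an integration-free argument, while your slice integral buys an immediate box formula and stays strictly within the stated hypotheses, at the price of a little measure theory.
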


To prove this theorem we need the following. 

\begin{lemma}\label{lem:productVHnets}
		Let $G$ and $H$ be unimodular groups and assume that $(A_i)_{i\in I}$ and $(B_j)_{j\in J}$ are Van Hove nets in $G$ and $H$ respectively. Then $(A_i\times B_j)_{(i\times j)\in I \times J}$ is a Van Hove net in $G\times H$, where $I\times J$ is ordered component wise. 
	\end{lemma}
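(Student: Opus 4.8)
The plan is to reduce to the case of a product compact set, derive an exact formula for the boundary of a product, and then read off the Van Hove condition for the product net from the two given Van Hove conditions together with Proposition~\ref{pro:VanHovenets+K}. Throughout I take the Haar measure $\mu$ on $G\times H$ to be the product $\mu_G\times\mu_H$ of Haar measures on the factors, which is itself a Haar measure on $G\times H$, so that $\mu(A\times B)=\mu_G(A)\mu_H(B)$.

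First I would fix a compact set $K\subseteq G\times H$ and put $K_G:=\pi_G(K)$, $K_H:=\pi_H(K)$, which are compact by continuity of the projections. Since $K\subseteq K_G\times K_H$ and $A\mapsto\partial_K A$ is monotone in $K$ (as noted after the definition of the $K$-boundary), it suffices to estimate $\partial_{K_G\times K_H}(A_i\times B_j)$, so I may assume $K=K_G\times K_H$. The key step is then a boundary identity. Since $A_i,B_j$ are compact, hence closed, one has $\overline{A_i\times B_j}=A_i\times B_j$ and $\overline{(A_i\times B_j)^c}=(\overline{A_i^c}\times H)\cup(G\times\overline{B_j^c})$. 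Computing the two Minkowski products, distributing the intersection over the union, and using $K_HH=H$, $K_GG=G$ and $K_G\overline{A_i}=K_GA_i$, I obtain
\[\partial_{K_G\times K_H}(A_i\times B_j)=\bigl(\partial_{K_G}A_i\times K_HB_j\bigr)\cup\bigl(K_GA_i\times \partial_{K_H}B_j\bigr).\]
This is the step I expect to carry most of the weight: the bookkeeping with closures and complements of products is where an error could slip in, so I would verify the two pieces of the distributed intersection separately (for the upper bound only the inclusion $\subseteq$ is actually needed).

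Once this identity is in hand, subadditivity of $\mu$ and the product formula give
\[\frac{\mu(\partial_{K}(A_i\times B_j))}{\mu(A_i\times B_j)}\leq \frac{\mu_G(\partial_{K_G}A_i)}{\mu_G(A_i)}\cdot\frac{\mu_H(K_HB_j)}{\mu_H(B_j)}+\frac{\mu_G(K_GA_i)}{\mu_G(A_i)}\cdot\frac{\mu_H(\partial_{K_H}B_j)}{\mu_H(B_j)}.\]
By the Van Hove condition the factors $\mu_G(\partial_{K_G}A_i)/\mu_G(A_i)$ and $\mu_H(\partial_{K_H}B_j)/\mu_H(B_j)$ tend to $0$ over $I$ and $J$ respectively, while Proposition~\ref{pro:VanHovenets+K} gives $\mu_G(K_GA_i)/\mu_G(A_i)\to1$ and $\mu_H(K_HB_j)/\mu_H(B_j)\to1$, so these last two factors are eventually bounded. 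Each product on the right is therefore a nonnegative net that factors as a net tending to $0$ in one index times a bounded net in the other; over the componentwise-ordered directed set $I\times J$ such a product tends to $0$ (given $\epsilon>0$, pass the bounded factor beyond the index where it is $\le 2$ and the null factor beyond the index where it is $<\epsilon/2$). Hence the right-hand side tends to $0$, which is the Van Hove condition for $(A_i\times B_j)_{(i,j)\in I\times J}$.

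Finally I would record the remaining requirements of the definition. Each $A_i\times B_j$ is compact as a product of compact sets, and since $(A_i)$ and $(B_j)$ are finally somewhere dense there are $i_0,j_0$ with $\operatorname{int}(A_i)$ and $\operatorname{int}(B_j)$ nonempty for $i\ge i_0$, $j\ge j_0$; then $\operatorname{int}(A_i)\times\operatorname{int}(B_j)\subseteq\operatorname{int}(A_i\times B_j)$ is nonempty for $(i,j)\ge(i_0,j_0)$, so the product net is finally somewhere dense. Together with the boundary estimate above this shows $(A_i\times B_j)_{(i,j)\in I\times J}$ is a Van Hove net in $G\times H$.
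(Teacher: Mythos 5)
Your proposal is correct and takes essentially the same route as the paper: both pass to the projections of the given compact set, bound $\partial_K(A_i\times B_j)$ by a union of products of boundaries with thickened sets, and then combine the Van Hove conditions with Proposition~\ref{pro:VanHovenets+K} to see that the quotient tends to $0$ over the componentwise-ordered product net. Your write-up is marginally sharper in that you prove an exact identity for product-shaped $K$ (the paper only records an inclusion, with a redundant third term) and you also verify the finally-somewhere-dense requirement, but the substance of the argument is identical.
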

	\begin{proof}
		Consider a compact subset $M\subseteq G\times H$ and let $K:=\pi_G(M)$ and $C:=\pi_H(M)$ the projections. A straight forward computation shows 
		\[\partial_M (A_i\times B_j)\subseteq (\partial_K A_i \times \partial_C  B_j)\cup (KA_i\times \partial_C B_j)\cup(\partial_K A_i \times C B_j),\]
		for all $i\in I$ and $j\in J$. Thus Proposition \ref{pro:VanHovenets+K} yields
		\begin{align*}
		0&\leq \frac{\mu_G\times \mu_H(\partial_M(A_i\times B_j))}{\mu_G\times \mu_H(A_i\times B_j)}\\
		&\leq\frac{\mu_G(\partial_K A_i) \mu_H(\partial_C B_j)}{\mu_G(A_i) \mu_H(B_j)}+\frac{\mu_G(K A_i) \mu_H(\partial_C B_j)}{\mu_G(A_i) \mu_H(B_j)}+\frac{\mu_G(\partial_K A_i) \mu_H(C B_j)}{\mu_G(A_i) \mu_H(B_j)}\overset{(i,j)\in I\times J}{\longrightarrow} 0.
		\end{align*}
	\end{proof}
	
\begin{proof}[Proof of Theorem \ref{the:ProductandOW}]
	Clearly $G$ and $H$ are amenable groups as the projections of the amenable group $G\times H$. Consider a monotone right-invariant and subadditive mapping $f\colon \mathcal{K}(G)\to \mathbb{R}$ and a Van Hove net $(A_i)_{i\in I}$. Denote by $\mu_H$ the Haar measure of $H$ and choose the Haar measure $\mu_{G\times H}$ as the product measure $\mu_G\times \mu_H$. Let $(B_j)_{j\in J}$ be any Van Hove net in $H$. It is easy to see that $h\colon \mathcal{K}(G\times H)\to \mathbb{R}$ defined by 
\[h(Q):=\inf\left\{\sum_{n=1}^N f(C_n)\mu_H(D_n);\, N\in \mathbb{N}, C_n\subseteq G, D_n\subseteq H, Q\subseteq \bigcup_{n=1}^N C_n\times D_n\right\}\]
is monotone, right invariant and subadditive. We next show that for compact subsets $A\subseteq G$ and $B\subseteq H$ there holds $h(A\times B)=f(A)\mu_H(B)$. Clearly there holds $h(A\times B)\leq f(A)\mu_H(B)$. To show the other inequality let $C_n\subseteq G$ and $D_n\subseteq H$ such that $A\times B\subseteq \bigcup_{n=1}^N C_n\times D_n$. Without lost of generality we assume $D_n\subseteq B$ for $n\in \mathcal{N}:=\{1,\cdots,N\}$. Let furthermore $\{E_1,\cdots,E_M\}$ be a finite Borel partition of $B$ s.t.\ $\bigcup_{m\in \mathcal{M}_n}E_m=D_n$ for all $n\in \mathcal{N}$, where we denote $\mathcal{M}:=\{1,\cdots,M\}$ and $\mathcal{M}_n:=\{m'\in \mathcal{M};\, E_{m'}\subseteq D_n\}$. Setting $\mathcal{N}_m:=\{n'\in \mathcal{N};\, E_{m}\subseteq D_{n'}\}$ one obtains $A\subseteq \bigcup_{n\in \mathcal{N}_m} C_n$ and thus by the subadditivity of $f$ that $f(A)\leq \sum_{n\in \mathcal{N}_m}f(C_n)$ for all $m\in \mathcal{M}$. Hence
\begin{align*}
	\sum_{n\in \mathcal{N}}f(C_n)\mu_H(D_n)&=\sum_{n\in \mathcal{N}}\sum_{m\in \mathcal{M}_n} f(C_n)\mu_H(E_m)=\sum_{n\in \mathcal{N},m\in \mathcal{M},E_m\subseteq D_n} f(C_n)\mu_H(E_m)\\
		&=\sum_{m\in \mathcal{M}}\sum_{n\in \mathcal{N}_m} f(C_n)\mu_H(E_m)\geq \sum_{m\in \mathcal{M}} f(A)\mu_H(E_m)=f(A)\mu_H(B).
\end{align*}
This shows $h(A\times B)=f(A)\mu_H(B)$ for compact subsets $A\subseteq G$ and $B\subseteq H$. By Lemma \ref{lem:productVHnets} we obtain that $(A_i\times B_j)_{(i,j)\in I \times J}$ is a Van Hove net in $G\times H$. As $G\times H$ satisfies the Ornstein-Weiss lemma this implies that
	\[\frac{f(A_i)}{\mu_G(A_i)}=\frac{f(A_i)\mu_H(B_j)}{\mu_G(A_i)\mu_H(B_j)}=\frac{h(A_i\times B_j)}{\mu_{G\times H}(A_i\times B_j)},\] converges to a limit independent from $(A_i)_{i\in I}$ and we obtain that $G$ satisfies the Ornstein Weiss lemma. 
\end{proof}

%Now Theorem \ref{the:CPSimpliesOW} can be obtained by the following arguments. If $(G,H,\Lambda)$ is a cut and project scheme, then $G\times H$ contains a uniform lattice and is thus an Ornstein-Weiss group by Proposition \ref{pro:OrnsteinWeisslemma}. Thus $G$ and $H$ are Ornstein-Weiss groups by Theorem \ref{the:ProductandOW}. 

\section{Basics of entropy theory}\label{sec:Entropy theory for Ornstein-Weiss groups}

% Actions in the context of CPS are defined naturally as actions on compact Hausdorff uniform spaces \cite{Schlottmann,baake2004dynamical}. 
%  As the arguments for introducing relative topological entropy are well known we took the freedom to present them in the slightly more general language of uniformities. The treatment of uniformities also allows to give a common ground for the metric space approach presented in the introduction and the corresponding terminology presented in \cite{Tagi-Zade}. 
During this section let $G$ be an amenable group that satisfies the Ornstein-Weiss lemma. 
  % These approaches use certain metrical distances or finite open covers respectively for scaling, while our approach uses the concept of entourages. Details will be presented in section \ref{sec:Actions on metric and topological spaces} below. Another motivation is that parts of the theory like the invariance under conjugation can be seen directly from the definitions with our approach and that . 

%
% of Ornstein Weiss groups acting on compact uniform spaces.  the relative topological entropy for actions of Ornstein-Weiss groups on compact uniform spaces and relate this approach to the better known approach via the Bowen metric for actions on compact metric spaces \cite{BrinandStuck,bowen1971entropy} and the approach for compact metric spaces by Tagi-Zade \cite{Tagi-Zade} using finite open covers for scaling. 

\subsection{Bowen entourage}

For an action $\phi\colon G\times X \to X$ on a compact Hausdorff space, an entourage $\eta\in \mathbb{U}_X$ and a compact subset $A\subseteq G$ we define the \emph{Bowen entourage} as
\[\eta_A:=\left \{(x,y);\, \forall g\in {A} : (\phi^g(x),\phi^g(y))\in \eta\right\}=\bigcap_{g\in {A}}\left(\phi^g \times \phi^g\right)^{-1}(\eta).\] 
We will show in Lemma \ref{lem:Bowenentourageisentourage} below that the Bowen entourage are indeed entourages of $X$.
In order to omit brackets we will use the convention, that the operation of taking a Bowen entourage is a stronger operation than the product of entourages.

\begin{remark} \label{rem:Bowenmetricundentourages}
	%The definition of the Bowen action is inspired by the definition of Bowen metric $d_A$, as defined in \cite{bowen1971entropy} and presented in the introduction. 
	% for actions of $\mathbb{Z}$. Let $\phi\colon G\times X \to X$ be a flow on a metric space $(X,d)$ and for $A\subseteq G$ compact define the \emph{Bowen metric} by $d_A(x,y):=\max_{g\in {A}}d(\phi^g(x),\phi^g(y))$ for $x,y\in X$. 
	It is straight forward to show, that $d_A$ is a metric and that $[d_A<\epsilon]=[d<\epsilon]_A$ for all compact $A\subseteq G$ and $\epsilon>0$. %Thus a subset $M\subseteq X$ has diameter with respect to $d_A$ less than $\epsilon$ iff $M^2 \subseteq ([d<\epsilon])_A$. 
\end{remark}

%The continuity of $\phi$ and the compactness of $X$ imply, that the image of the Bowen action is indeed contained in $\mathbb{U}_X$.

\begin{lemma} \label{lem:Bowenentourageisentourage}
	Let $\phi \colon G\times X \to X$ be a flow on a compact Hausdorff space. 
	For every $\eta \in \mathbb{U}_X$ and every compact subset $A\subseteq G$  there holds $\eta_A\in \mathbb{U}_X$. 
\end{lemma}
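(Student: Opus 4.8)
The plan is to show that $\eta_A$ is a neighbourhood of the diagonal $\Delta_X$ in $X^2$, which is exactly the assertion $\eta_A \in \mathbb{U}_X$. First I would record the trivial inclusion $\Delta_X \subseteq \eta_A$: for any $x$ and any $g \in A$ the pair $(\phi^g(x),\phi^g(x))$ lies in $\Delta_X \subseteq \eta$, so $(x,x) \in \eta_A$. Thus it remains to prove that every diagonal point $(x_0,x_0)$ is an interior point of $\eta_A$, i.e.\ that $\Delta_X \subseteq \operatorname{int}(\eta_A)$.

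The key preparatory step is to pass to a well-behaved smaller entourage. Using the standard uniformity property that for every entourage there is an entourage whose self-composition lies inside it, together with the fact (noted in the text) that the open symmetric entourages form a base of $\mathbb{U}_X$, I would fix an open symmetric $\kappa \in \mathbb{U}_X$ with $\kappa\kappa \subseteq \eta$. For such a $\kappa$ and any $z \in X$ the set $\kappa[z]$ is an open neighbourhood of $z$.

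The heart of the argument, and the step where compactness of $A$ is essential, is a uniform continuity argument. Fix $x_0 \in X$. For each $g \in A$, joint continuity of $\phi \colon G \times X \to X$ at $(g,x_0)$ --- note that $\phi^g(x_0)$ lies in the open set $\kappa[\phi^g(x_0)]$ --- yields open neighbourhoods $V_g \ni g$ in $G$ and $W_g \ni x_0$ in $X$ with $\phi(V_g \times W_g) \subseteq \kappa[\phi^g(x_0)]$. The family $\{V_g\}_{g\in A}$ is an open cover of the compact set $A$, so finitely many $V_{g_1},\dots,V_{g_n}$ suffice; set $W := \bigcap_{k=1}^n W_{g_k}$, an open neighbourhood of $x_0$.

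It then remains to verify $W \times W \subseteq \eta_A$. Given $x,y \in W$ and $g \in A$, choose $k$ with $g \in V_{g_k}$. Then both $\phi^g(x)$ and $\phi^g(y)$ lie in $\kappa[\phi^{g_k}(x_0)]$, so $(\phi^g(x),\phi^{g_k}(x_0)) \in \kappa$ and, by symmetry of $\kappa$, $(\phi^{g_k}(x_0),\phi^g(y)) \in \kappa$; composing through the common point $\phi^{g_k}(x_0)$ gives $(\phi^g(x),\phi^g(y)) \in \kappa\kappa \subseteq \eta$. As $g \in A$ was arbitrary, this means $(x,y) \in \eta_A$. Hence $W \times W \subseteq \eta_A$, so $(x_0,x_0) \in \operatorname{int}(\eta_A)$; since $x_0$ was arbitrary, $\Delta_X \subseteq \operatorname{int}(\eta_A)$ and therefore $\eta_A \in \mathbb{U}_X$. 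I expect the main obstacle to be phrasing the finite-subcover-plus-intersection step so that a single neighbourhood $W$ of $x_0$ works simultaneously for \emph{all} $g \in A$; once the uniformity in $g$ is secured, everything else is routine bookkeeping with the symmetry and composition of $\kappa$.
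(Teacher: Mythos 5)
Your proof is correct, but it takes a different route from the paper. The paper's proof is a two-line application of heavy machinery: since $A\times X$ is compact Hausdorff, the continuous map $\phi\colon A\times X\to X$ is automatically \emph{uniformly} continuous (Kelley, Theorem 6.31), so $(\phi\times\phi)^{-1}(\eta)$ is an entourage of $A\times X$; one then extracts from the base of the product uniformity a pair $\kappa\in\mathbb{U}_A$, $\theta\in\mathbb{U}_X$ with $\kappa\mathbin{\overline{\times}}\theta\subseteq(\phi\times\phi)^{-1}(\eta)$ and observes, by plugging in diagonal pairs $(g,g)\in\kappa$, that $\theta\subseteq\eta_A$. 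You instead re-prove the relevant instance of uniform continuity by hand: fix an open symmetric $\kappa$ with $\kappa\kappa\subseteq\eta$, use joint continuity at each $(g,x_0)$ to get neighbourhoods $V_g\times W_g$ mapped into $\kappa[\phi^g(x_0)]$, take a finite subcover of the compact set $A$, intersect the finitely many $W_{g_k}$, and close with the symmetry-and-composition argument through the common point $\phi^{g_k}(x_0)$. The trade-off is clear: the paper's argument is shorter and produces a single entourage $\theta$ contained in $\eta_A$ at the price of invoking the uniform continuity theorem and product uniformities; yours is self-contained, needs only the basic axioms of $\mathbb{U}_X$ (existence of half-size entourages and the open symmetric base, both available because $X$ is compact Hausdorff) together with compactness of $A$ for the finite subcover, and establishes that $\eta_A$ is a neighbourhood of each diagonal point, which suffices equally well. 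Both arguments use compactness in the essential places, and your bookkeeping with the paper's conventions for $\eta[x]$, symmetry, and composition is accurate throughout.
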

\begin{proof}% As $\eta_A=\eta_{\overline{A}}$ we assume without lost of generality $A$ to be compact. 

%For a map $f\colon X \to Y$ we write $f\times f\colon X^2 \to Y^2$ for the direct product defined by $f\times f(x,y):=(f(x),f(y))$. A map $f\colon X \to Y$ between compact Hausdorff spaces $X$ and $Y$ is called \emph{uniformly continuous}, if the preimage of every entourage of $Y$ under $f\times f$ is an entourage of $X$. Note that since we assume $X$ to be compact we obtain that $f$ is continuous iff $f$ is uniformly continuous \cite[Theorem 6.31]{Kelley}. 

	Note that $\phi\colon A\times X \to X$ is a continuous mapping from a compact Hausdorff space. Thus by \cite[Theorem 6.31]{Kelley}
	%\[\{(g,x,g',x')\in (A\times X)\times (A\times X);\, (\phi(g,x),\phi(g',x'))\in \eta\}\]
	$(\phi\times \phi)^{-1}(\eta)$
	is contained in the uniformity of $A\times X$. For $\kappa \in \mathbb{U}_A\text{ and } \theta \in \mathbb{U}_X$ we set $\kappa \overline{\times} \theta :=\{(g,x,g',x')\in  (A\times X)\times (A\times X);\, (g,g')\in \kappa \text{ and } (x,x')\in \theta \}$. As
	$ \{\kappa \overline{\times} \theta ;\, \kappa \in \mathbb{U}_A\text{ and } \theta \in \mathbb{U}_X\} $ is a base for the product uniformity on $A\times X$ there are $\kappa\in \mathbb{U}_A$ and $\theta\in \mathbb{U}_X$ with 
	\begin{align*}
		\kappa \overline{\times} \theta \subseteq& (\phi\times \phi)^{-1}(\eta) 
	= \{(g,x,g',x')\in (A\times X)\times (A\times X);\, (\phi^g(x),\phi^{g'}(x'))\in \eta\}.
	\end{align*}
	For $(x,x')\in \theta$ and $g\in A$ there holds $(g,x,g,x')\in \kappa \overline{\times} \theta$ and we obtain $(\phi^g(x),\phi^g(y))\in \eta$. This proves $\theta \subseteq \eta_A$ and hence
	$\eta_A\in \mathbb{U}_X$. 
\end{proof}

%\begin{remark}\label{rem:dAinducesUX}
%	Let $\phi$ be an action of $G$ on a compact metric space. Note that the previous lemma can be seen as the natural generalization of the fact that all Bowen metrics with respect to $\phi$ are equivalent, as it follows from Lemma \ref{lem:Bowenentourageisentourage} and Remark \ref{rem:Bowenmetricundentourages} that they induce the same uniformity. 
%	
%	% i.e.\ they induce the same topology. Indeed, they induce the same uniformity. To see this let $A\subseteq G$ be compact. Observe that $[d_A<\epsilon]=[d<\epsilon]_A$ is contained in the uniformity generated by $d$ for $\epsilon>0$, as seen in the previous lemma. Furthermore $[d<\epsilon]\supseteq [d_A<\epsilon]$ is contained in the uniformity generated by $d_A$. Hence the uniformities of $d_A$ and of $d$ coincide. 
%\end{remark}

The following is straight forward to prove and justifies to write $\eta_{AB}$ for $\eta_{(AB)}=(\eta_A)_B$. % for all entourages $\eta\in \mathbb{U}_X$ and compact subsets $A,B\subseteq G$. 

\begin{proposition} \label{pro:Bowenentouragesarules}
	For $\eta,\kappa\in \mathbb{U}_X$ and compact subsets $A,B\subseteq G$  there holds 
	$\eta_{(AB)}=(\eta_A)_B$,
	$\eta_{A\cup B}=\eta_A\cap\eta_B$ and
	$\eta_A\kappa_A\subseteq (\eta \kappa)_A$.
%\begin{itemize}
%	\item[(i)] 	$\eta_{(AB)}=(\eta_A)_B$,
%	\item[(ii)] $\eta_{A\cup B}=\eta_A\cap\eta_B$ and
%	\item[(iii)] $\eta_A\kappa_A\subseteq (\eta \kappa)_A$.
%\end{itemize}	
\end{proposition}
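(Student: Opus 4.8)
The plan is to verify each of the three identities directly from the definition
\[
\eta_A=\bigcap_{g\in A}(\phi^g\times\phi^g)^{-1}(\eta),
\]
using the group action axiom $\phi^{gg'}=\phi^g\circ\phi^{g'}$ together with elementary set-theoretic manipulations of intersections and preimages. Since Lemma \ref{lem:Bowenentourageisentourage} already guarantees that every Bowen entourage is genuinely an entourage, each side of every claimed equation or inclusion lies in $\mathbb{U}_X$, so there is nothing to check beyond the set identities themselves.

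First I would establish $\eta_{A\cup B}=\eta_A\cap\eta_B$, which is the most transparent: a pair $(x,y)$ lies in $\eta_{A\cup B}$ iff $(\phi^g(x),\phi^g(y))\in\eta$ for every $g\in A\cup B$, and this splits as ``for every $g\in A$'' and ``for every $g\in B$'', giving membership in $\eta_A$ and $\eta_B$ simultaneously. Formally this is just the distributivity $\bigcap_{g\in A\cup B}=\bigcap_{g\in A}\cap\bigcap_{g\in B}$.

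Next I would prove the composition law $\eta_{(AB)}=(\eta_A)_B$, which is the heart of the statement and the reason the notation $\eta_{AB}$ is well defined. Unwinding $(\eta_A)_B=\bigcap_{b\in B}(\phi^b\times\phi^b)^{-1}(\eta_A)$ and then expanding $\eta_A=\bigcap_{a\in A}(\phi^a\times\phi^a)^{-1}(\eta)$, a pair $(x,y)$ lies in $(\eta_A)_B$ iff for all $b\in B$ and all $a\in A$ one has $(\phi^a\phi^b(x),\phi^a\phi^b(y))\in\eta$. Using $\phi^a\circ\phi^b=\phi^{ab}$ this is exactly the condition $(\phi^{ab}(x),\phi^{ab}(y))\in\eta$ for all $ab\in AB$, i.e.\ membership in $\eta_{(AB)}$. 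The only subtlety worth a word is that the Minkowski product $AB$ ranges over all products $ab$ with $a\in A$, $b\in B$, matching precisely the doubly-indexed intersection; I would note this correspondence explicitly rather than leaving it implicit.

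Finally the inclusion $\eta_A\kappa_A\subseteq(\eta\kappa)_A$ follows from the compatibility of relation composition with the action. If $(x,y)\in\eta_A\kappa_A$, then there is $z$ with $(x,z)\in\eta_A$ and $(z,y)\in\kappa_A$; applying $\phi^g\times\phi^g$ for a fixed $g\in A$ gives $(\phi^g(x),\phi^g(z))\in\eta$ and $(\phi^g(z),\phi^g(y))\in\kappa$, so $(\phi^g(x),\phi^g(y))\in\eta\kappa$, and since $g\in A$ was arbitrary we conclude $(x,y)\in(\eta\kappa)_A$. I expect no genuine obstacle here; the main point requiring care is keeping the quantifier order straight in the composition law, since one must check that the intersection over $A$ and the intersection over $B$ recombine into a single intersection over the product set $AB$ exactly via the homomorphism property of $g\mapsto\phi^g$.
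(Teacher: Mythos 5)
Your proof is correct and is exactly the direct verification the paper has in mind: the paper omits the argument entirely, remarking only that the proposition is ``straightforward to prove,'' and your unwinding of the definition $\eta_A=\bigcap_{g\in A}(\phi^g\times\phi^g)^{-1}(\eta)$ together with the action axiom $\phi^a\circ\phi^b=\phi^{ab}$ and the paper's convention for relation composition fills that in correctly. Nothing is missing; the remark that Lemma \ref{lem:Bowenentourageisentourage} guarantees both sides are entourages is not even needed for the set identities themselves.
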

%\begin{proof} Straight forward.
%To show (a) we calculate
%	\begin{align*}
%		(\eta_A)_B&=\bigcap_{b\in B} \left((\phi^b\times\phi^b)^{-1}\left(\bigcap_{a\in A} (\phi^a\times \phi^a)^{-1}(\eta)\right)\right)\\
%		&=\bigcap_{(a,b)\in A\times B} \left((\phi^b\times\phi^b)^{-1}\circ (\phi^a\times \phi^a)^{-1}(\eta)\right)\\
%		&=\bigcap_{(a,b)\in A\times B} \left((\phi^{ab}\times\phi^{ab})^{-1}(\eta)\right)\\
%		&=\eta_{(AB)}.
%	\end{align*}
%	To show (b) we calculate
%	\begin{align*}
%		\eta_A\cap\eta_B&=\left(\bigcap_{a\in A} (\phi^a\times\phi^a)^{-1}(\eta)\right)\cap\left(\bigcap_{b\in B} (\phi^b\times\phi^b)^{-1}(\eta)\right)\\
%		&=\left(\bigcap_{c\in A\cup B} (\phi^c\times\phi^c)^{-1}(\eta)\right)\\
%		&=\eta_{A\cup B}.
%	\end{align*}
% To show (c): 
%For $(x,y)\in \eta_A\eta_A$ there is $z\in X$ with $(x,z),(z,x)\in \eta_A$. Thus for all $g\in A$ there holds $(\phi^g(x),\phi^g(z)),(\phi^g(z),\phi^g(y))\in \eta$, hence $(\phi^g(x),\phi^g(y))\in \eta \eta$. This shows $(x,y)\in (\eta \eta)_A$.
%\end{proof}

%\begin{remark}\label{rem:dAB=dA_B}
%	Let $\phi$ be an action on a compact metric space. Proposition \ref{pro:Bowenentouragesarules}(a) and (b) can be interpreted as the generalization of the fact, that for $A,B\subseteq G$ compact and $x,y\in X$ there holds $d_{AB}(x,y)=(d_A)_B(x,y)$ and $d_{A\cup B}(x,y)=\max\{d_A(x,y),d_B(x,y)\}$.
%\end{remark}

\subsection{Relative topological entropy}

The following approach to relative topological entropy is inspired by the approach to topological entropy of $\mathbb{Z}$-actions on compact metric spaces via sets of small diameter, given in \cite[Section 2.5]{BrinandStuck}. Consider first a compact Hausdorff space $X$ and $\eta \in \mathbb{U}_X$.\begin{definition}
	For $\eta\in \mathbb{U}_X$ we say that a subset $M\subseteq X$ is \emph{$\eta$-small}, if any $x\in M$ is $\eta$-close to any $y\in M$, i.e.\ iff $M^2\subseteq \eta$.  We say, that a set $\mathcal{U}$ of subsets of $X$ is of \emph{scale $\eta$}, if $U$ is $\eta$-small for every $U\in \mathcal{U}$.
As $X$ is compact there is a finite open cover of $X$ of scale $\eta$. Thus for every $M\subseteq X$ there exists a finite open cover of scale $\eta$ as well. 
For $M\subseteq X$ and $\eta\in \mathbb{U}_X$ we denote by
	$\operatorname{cov}_M(\eta)$
the minimal cardinality of an open cover of $M$ of scale $\eta$. If $p\colon X\to Y$ is a map to some set $Y$, we define 
\[ \operatorname{cov}_p(\eta):=\sup_{y\in Y}\operatorname{cov}_{p^{-1}(y)}(\eta).\]
%
%\begin{remark}\label{rem:coverofscaleanddiameter}
%	Let $(X,d)$ be a metric space. The \emph{diameter} of a subset $M$ is defined as 
%	$\operatorname{diam}(M):=\sup_{(x,y)\in M^2}d(x,y).$	
%The \emph{diameter} of a set $\mathcal{U}$ of subsets of $X$ is 
%	$\operatorname{diam}(\mathcal{U})=\sup_{U\in \mathcal{U}} \operatorname{diam}(U)$.  For $\epsilon>0$ a subset $M\subseteq X$ satisfies $\operatorname{diam}(M)\leq \epsilon$ iff it is $[d\leq\epsilon]$-small, and an open cover satisfies $\operatorname{diam}(\mathcal{U})\leq \epsilon$ iff it is of scale $[d\leq\epsilon]$. 
%\end{remark}

\end{definition}

%$T_1$-spaces?

%It is immediate that $\eta \mapsto \operatorname{cov}_p(\eta)$ is decreasing. As the Bowen action is decreasing in the second argument, we obtain 
A well known argument shows that
$\mathcal{K}(G) \ni A\mapsto \log (\operatorname{cov}_p(\eta_{A}))$ is monotone, right invariant and subadditive for every $\eta\in \mathbb{U}_X$. 
%for every action $\pi$ of an Ornstein-Weiss group $G$ on a compact uniform space $X$ and every factor map $p$ from $\pi$. 
Thus the limit in the following definition of relative topological entropy exists and is independent from the choice of a Van Hove net.  

\begin{definition}
	Let $\phi$ be an action of $G$ on a compact Hausdorff space $X$ and $\psi$ be a factor of $\phi$ via factor map $p$. For some Van Hove net $(A_i)_{i\in I}$ and $\eta\in \mathbb{U}_X$, we define %the \emph{relative topological entropy on covering scale $\eta$} as
	\[\operatorname{E}(\eta|\phi \overset{p}{\to} \psi):=\lim_{i\in I} \frac{\log(\operatorname{cov}_p(\eta_{A_i}))}{\mu(A_i)}.\] 
	We furthermore define the \emph{relative topological entropy of $\phi$ relative to $\psi$} as
	\[\operatorname{E}(\phi \overset{p}{\to} \psi):=\sup_{\eta \in \mathbb{U}_X}\operatorname{E}(\eta|\phi \overset{p}{\to} \psi).\]
 %Furthermore, we define the \emph{entropy} $\operatorname{E}(\tau)$ of $\tau$, as the relative entropy of $\tau$ under the condition of the one point flow. 
	The \emph{topological entropy} of $\phi$ is defined as the relative topological entropy relative to the one point flow. Note that in this case $\operatorname{cov}_p(\eta)=\operatorname{cov}_X(\eta)$ is the minimal cardinality of an open covering of $X$ of scale $\eta$.
\end{definition}

\begin{remark}\label{rem:topologicalconjugacy} \label{rem:boundednessnotgiven}	
\begin{itemize}
	\item[(i)] There holds $\operatorname{E}(\phi \overset{p}{\to} \psi)=\sup_{\eta \in \mathbb{B}_X}\operatorname{E}(\eta|\phi \overset{p}{\to} \psi)$
		for any base $\mathbb{B}_X$ of $\mathbb{U}_X$, as $\operatorname{E}(\eta|\phi \overset{p}{\to} \psi)$ increases whenever $\eta$ decreases with respect to set inclusion. 
		\item[(ii)] If $(X,d)$ is a compact metric space, we can choose the canonical base $\{[d<\epsilon];\, \epsilon>0\}$ to obtain the definition given in the introduction. 
%	\item[(iii)] Each topological conjugacy  between compact uniform spaces defines a bijection between the uniformities of the corresponding phase spaces. Thus we obtain the well known fact that topological entropy is invariant under topological conjugacy. %This is well known for actions on compact metric spaces and shown for example in \cite[Corollary 2.5.4.]{BrinandStuck}.
%	\item[(iv)] Note that we do not need $p$ to be a continuous map. In fact we can define relative topological entropy of a (continuous) action $\phi$ with respect to a {set theoretic factor map}\footnote{
%	For a group $G$ and a set $Y$, we call a mapping $\psi\colon G\times Y\to Y$ a \emph{set theoretic action} of $G$ on $Y$, whenever $\psi$ is a (continuous) action after we equip $G$ and $Y$ with the discrete topology. 	
%	% the  identity on $X$ and for all $g,g'\in G$ there holds $\phi(g,\phi(g',\cdot))=\phi(gg',\cdot)$. 
%	If $\psi \colon G\times X\to X$ is a further set theoretic action we call a map $p\colon X \to Y$ a \emph{set theoretic factor map}, if it is a (continuous) factor map after we equip $G$, $X$ and $Y$ with the discrete topology.
%	%Furthermore if $\phi\colon G\times X\to X$ and $\psi \colon G\times Y\to Y$ are set theoretic actions, we call a map $p\colon X \to Y$ a \emph{set theoretic factor map}, if $p(\phi(g,x))=\psi(g,p(x))$ holds for all $(g,x)\in G\times X$.
%	}
%	$p\colon X\to Y$ to a set theoretic action $\psi\colon G\times Y\to Y$.
	
\item[(iii)] If $X$ is a compact Hausdorff space, then $\{\bigcup_{U\in \mathcal{U}}U^2;\, \mathcal{U}$ finite open cover of $X\}$ is a base of the uniformity of $X$. Using this base one obtains the definitions used in \cite{Tagi-Zade}. 
\item[(iv)] Consider the continuous rotation $\mathbb{R}\times \mathbb{T}\to \mathbb{T}\colon (g,x)\mapsto g+x\mod 1$ with $\mathbb{T}:=\mathbb{R}\big/\mathbb{Z}$. There holds $\log(\operatorname{cov}_{\mathbb{T}}[d<\delta])=\log(\operatorname{cov}_{\mathbb{T}}[d_{B_\epsilon}<\delta])$ for all $\epsilon,\delta >0$, where $B_\epsilon$ denotes the centred closed ball of radius $\epsilon$. We thus obtain that $A\mapsto {\log(\operatorname{cov}_{\mathbb{T}}([d<\delta]_A))}{\mu(A)}^{-1}$
is not bounded, whenever $\delta$ is chosen small enough. This observation is the reason why we can not assume the boundedness of $\mathcal{K}(G)\ni A\mapsto {f(A)}{(\mu(A))^{-1}}$ like in \cite{pogorzelski2016banach}. Compare  with Remark \ref{rem:Ornsteinweissgroups}.
\end{itemize}
\end{remark}

\subsection{Relative topological entropy via spanning and separating sets}

It is well known that one can also define topological entropy of $\mathbb{Z}$-actions on compact metric spaces in terms of separated and of spanning sets \cite{bowen1971entropy,BrinandStuck}. %See for example \cite[Section 2.5]{BrinandStuck}.
 In \cite{Hood} this approach is generalized to $\mathbb{Z}$-actions of compact Hausdorff spaces. As this approach is important in the context of aperiodic order \cite{baake2007pure,JaegerLenzandOertel,fuhrmann2018irregular} we give a brief recap. Consider a compact Hausdorff space $X$.

\begin{definition}
For $\eta\in \mathbb{U}_X$ a subset $S\subseteq X$ is called \emph{$\eta$-separated}, if for every $s\in S$ there is no further element in $S$ that is $\eta$-close to $s$. 
Furthermore we say that $S\subseteq X$ is \emph{$\eta$-spanning} for $M\subseteq X$, if for all $m\in M$ there is $s\in S$ such that $s$ is $\eta$-close to $m$ or $m$ is $\eta$-close to $s$. 
\end{definition}

\begin{remark}\label{rem:Bowenmetricandseparation}
%Both notions are symmetric, i.e. $S$ is $\eta$-separated ($\eta$-spanning for $M$) iff it is $\eta^{-1}$-separated ($\eta^{-1}$-spanning for $M$). 
	A subset $S$ of a metric space $(X,d)$ is $[d<\epsilon]$-separated, if any two distinct points in $S$ are at least $\epsilon$ apart, i.e. $d(x,y)\geq \epsilon$ for all $x,y\in S$ with $x\neq y$. Furthermore $S$ is $[d<\epsilon]$-spanning for $M\subseteq X$, iff for every $m\in M$ there is $s\in S$ such that $d(s,m)<\epsilon$. 
\end{remark}
%
%\begin{itemize}
%\item[(i)] For $\eta\in \mathbb{U}_X$ and $M\subseteq X$ the following statements are equivalent
%	\begin{itemize}
%	\item[(a)] $M$ is $\eta$ separated,
%	\item[(b)] $\eta[m]$ is disjoint from $M\setminus \{m\}$ for every $m\in M$,
%	\item[(c)] for any two distinct elements $x,y\in M$ we have that $x$ is not $\eta$-close to $y$.
%	\end{itemize}		
%\end{itemize}
%This holds, iff  and is equivalent to the condition that for every  and vice versa. 
%%If $\eta$ is symmetric, then $M$ is $\eta$-separated, iff every two distinct elements $x,y\in M$ are not $\eta$-close. 
% 
%
%
%\begin{remark}
%spanning:
%Note that this is equivalent to the condition that $\eta[k]$ is not disjoint from $M$, for every $k\in K$; and also equivalent to the condition that $\{[m]\eta;\,m\in M\}$ covers $K$. %, which holds iff $[M]\eta=X$. 
%Note 
%\end{remark}

With similar arguments as used in metric spaces we obtain the following lemma. 
\begin{lemma} \label{lem:SepandCov}\label{lem:SpaandSep} 
	For $\eta\in \mathbb{U}_X$ and $M\subseteq X$ the cardinality of every $\eta$-separated subset $S\subseteq M$ is bounded from above by $\operatorname{cov}_M(\eta)<\infty$. In particular there are finite $\eta$-separated subsets of $M$ of maximal cardinality. Every $\eta$-separated subset $S\subseteq M$ of maximal cardinality is $\eta$-spanning for $M$. In particular there are finite subsets of $M$ that are $\eta$-spanning for $M$.
\end{lemma}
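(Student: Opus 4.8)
The plan is to prove the four assertions in order, resting everything on the single observation that an $\eta$-small set meets an $\eta$-separated set in at most one point.

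For the cardinality bound, fix a minimal open cover $\mathcal{U}$ of $M$ of scale $\eta$, so that $|\mathcal{U}|=\operatorname{cov}_M(\eta)$; such a cover exists and is finite by the definition of $\operatorname{cov}_M(\eta)$ together with compactness of $X$. If $S\subseteq M$ is $\eta$-separated and $s,s'\in S\cap U$ with $s\neq s'$ for some $U\in\mathcal{U}$, then $(s',s)\in U^2\subseteq\eta$ because $U$ is $\eta$-small, so $s'$ is $\eta$-close to $s$, contradicting separatedness. Hence each $U\in\mathcal{U}$ contains at most one point of $S$, and choosing for each $s\in S$ a member $U_s\in\mathcal{U}$ with $s\in U_s$ defines an injection $s\mapsto U_s$; thus $|S|\le|\mathcal{U}|=\operatorname{cov}_M(\eta)<\infty$.

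The second assertion is then immediate: the empty set is vacuously $\eta$-separated, so the cardinalities of $\eta$-separated subsets of $M$ form a nonempty subset of $\{0,1,\dots,\operatorname{cov}_M(\eta)\}$, which attains a maximum, realized by a finite set. For the third assertion I would argue by contradiction: let $S$ be $\eta$-separated of maximal cardinality and suppose it is not $\eta$-spanning for $M$, witnessed by some $m\in M$ with $(s,m)\notin\eta$ and $(m,s)\notin\eta$ for every $s\in S$. Since $\Delta_X\subseteq\eta$ gives $(m,m)\in\eta$, the point $m$ cannot lie in $S$, so $S\cup\{m\}$ is a strictly larger subset of $M$. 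Checking that $S\cup\{m\}$ remains $\eta$-separated only requires ruling out closeness between $m$ and the points of $S$ in both directions, which is exactly what the failure of spanning provides; this contradicts the maximality of $|S|$. Combining the finite maximal $\eta$-separated set of the second assertion with this spanning property yields the fourth assertion.

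The only step needing genuine care is keeping track of the directions of $\eta$-closeness, since $\eta$ need not be symmetric: separatedness is phrased via $(s',s)\in\eta$, whereas spanning admits either $(s,m)\in\eta$ or $(m,s)\in\eta$, so one must confirm that the two-sided failure of spanning delivers precisely the two non-closeness facts needed to enlarge $S$. Beyond this bookkeeping I anticipate no real obstacle.
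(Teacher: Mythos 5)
Your proof is correct and takes essentially the same approach the paper intends: the paper gives no written proof, only the remark that ``similar arguments as used in metric spaces'' apply, and your argument (each $\eta$-small cover element meets an $\eta$-separated set at most once, hence the bound; a maximal separated set must be spanning, else it could be enlarged) is precisely that standard argument carried over. Your explicit bookkeeping of the two directions of $\eta$-closeness, needed because $\eta$ may be asymmetric while the paper's definitions of separated and spanning use different directions, is exactly the care the adaptation requires.
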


\begin{definition}
	For $\eta\in \mathbb{U}_X$ and $M\subseteq X$ we define
	$\operatorname{sep}_M(\eta)$
	as the maximal cardinality of a subset of $M$ that is $\eta$-separated and
	$\operatorname{spa}_M(\eta)$
	as the minimal cardinality of a subset of $M$ that is $\eta$-spanning for $M$. 
	For a map $p\colon X\to Y$ to some set $Y$ we define
	\begin{center}
		$\operatorname{sep}_p(\eta):=\sup_{y\in Y} \operatorname{sep}_{p^{-1}(y)}(\eta)$ ~and ~$\operatorname{spa}_p(\eta):=\sup_{y\in Y} \operatorname{spa}_{p^{-1}(y)}(\eta).$
	\end{center}
%	
%	\[\operatorname{sep}_p(\eta):=\sup_{y\in Y} \operatorname{sep}_{p^{-1}(y)}(\eta)\]
%	and
%	\[\operatorname{spa}_p(\eta):=\sup_{y\in Y} \operatorname{spa}_{p^{-1}(y)}(\eta).\]
%	\begin{itemize}
%	%	\item[] $\operatorname{cov}_D(\eta):=\min\{|\mathcal{U}|;\, \mathcal{U}$ open cover of $D$ of scale $\eta\}$,
%		\item[] $\operatorname{sep}_D(\eta):=\max\{|M|;\, M\subseteq D$ is $\eta$-separated$\}$,
%		\item[] $\operatorname{spa}_D(\eta):=\min\{|M|;\, M\subseteq D$ is $\eta$-spanning for $D\}$. 
%\end{itemize}	 
\end{definition}

Unfortunately the Ornstein-Weiss lemma can not be applied directly to these notions. Nevertheless we use $\operatorname{cov}_p$ to show that $\operatorname{spa}_p$ and $\operatorname{sep}_p$ can be used to define entropy independently from the choice of a Van Hove net. A straight forward argument shows the following. 

%\begin{lemma}\label{lem:SpaSepCovStaticRelationI}
%	For symmetric entourages  $\eta,\theta \in \mathbb{U}_X$ and a subset $M\subseteq X$ there holds 
%	%\begin{itemize}
%	 		%\item[(i)] $\operatorname{spa}_{M}(\eta)\leq\operatorname{sep}_{M}(\eta)\leq \operatorname{cov}_{M}(\eta)$ and
%	 		%\item[(ii)] $\operatorname{spa}_{M}(\eta)\leq \operatorname{sep}_{M}(\eta)$ and
%	 		%\item[(ii)] 
%	 		$\operatorname{cov}_{M}(\theta\eta\eta\theta)\leq \operatorname{spa}_{M}(\eta)$, whenever $\theta$ is open. 
%% 	\end{itemize}
%\end{lemma}
%\begin{proof}
%	%From Lemma \ref{lem:SepandCov} and Lemma \ref{lem:SpaandSep} we obtain (i).  To show (ii) l
%	Let $S\subseteq M$ be $\eta$-spanning for $M$. As $\theta$ is open and $\eta$ is symmetric we obtain $\{\theta\eta[s];\, s \in S\}$ to be an open cover of $M$. A straight forward argument shows this cover to be of scale $\theta\eta\eta\theta$. 
%%	
%%	It suffices to show that $\theta\eta[s]$ is $(\theta\eta\eta\theta)$-small for any $s\in S$. For $x,y\in \theta\eta[s]=\theta^{-1}\eta^{-1}[s]=(\eta\theta)^{-1}[s]$ we know $x$ to be $\theta\eta$-close to $s$ and $s$ to be $\eta\theta$-close to $y$, hence $x$ to be $\theta\eta\eta\theta$-close to $y$.
%\end{proof}

%The following allows to link these notions to the definition of relative topological entropy. 

\begin{lemma} \label{lem:SpaSepCovNONStaticRelation}
	 Let $\eta\in \mathbb{U}_X$ and $p\colon X\to Y$ be a map to a set $Y$. Then there exists an  entourage $\theta\in \mathbb{U}_X$ with $\theta\subseteq \eta$ such that for every compact $A\subseteq G$ there holds 
	 \[\operatorname{cov}_{p}(\eta_A)\leq \operatorname{spa}_{p}(\theta_A)\leq \operatorname{sep}_{p}(\theta_A)\leq \operatorname{cov}_{p}(\theta_A).\]
	 \end{lemma}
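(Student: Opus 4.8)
The plan is to pass to the individual fibres $p^{-1}(y)$ and to reduce everything to the fibrewise comparison furnished by Lemma \ref{lem:SepandCov}. First I would observe that the two rightmost inequalities require no special choice of $\theta$ at all: for \emph{any} symmetric entourage $\theta$ and any compact $A\subseteq G$, applying Lemma \ref{lem:SepandCov} to $M=p^{-1}(y)$ with the entourage $\theta_A$ gives
\[\operatorname{spa}_{p^{-1}(y)}(\theta_A)\leq \operatorname{sep}_{p^{-1}(y)}(\theta_A)\leq \operatorname{cov}_{p^{-1}(y)}(\theta_A),\]
and taking the supremum over $y\in Y$ yields $\operatorname{spa}_p(\theta_A)\leq \operatorname{sep}_p(\theta_A)\leq \operatorname{cov}_p(\theta_A)$. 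Hence the whole task is to produce a single entourage $\theta\subseteq\eta$, depending only on $\eta$ and not on $A$, for which $\operatorname{cov}_p(\eta_A)\leq \operatorname{spa}_p(\theta_A)$ holds for every compact $A$.

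For this I would choose $\theta$ to be a symmetric open entourage with $\theta\theta\theta\theta\subseteq\eta$; such a $\theta$ exists since the symmetric open entourages form a base of $\mathbb{U}_X$ and one can iterate the standard fact that every entourage contains the self-composition of a symmetric open entourage. Because $\Delta_X\subseteq\theta$ this also gives $\theta\subseteq\theta\theta\theta\theta\subseteq\eta$, as required. Now fix a compact $A$ and a fibre $M=p^{-1}(y)$, and let $S\subseteq M$ be a $\theta_A$-spanning set of minimal cardinality $\operatorname{spa}_M(\theta_A)$. The naive idea is to cover $M$ by the sections $\theta_A[s]$, $s\in S$: since $\theta_A$ inherits symmetry from $\theta$, these sections cover $M$, and by Proposition \ref{pro:Bowenentouragesarules} together with the monotonicity of the Bowen construction they are $\eta_A$-small.

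I expect the main obstacle to be that $\theta_A=\bigcap_{g\in A}(\phi^g\times\phi^g)^{-1}(\theta)$ is an \emph{infinite} intersection, so the sections $\theta_A[s]$ need not be open, whereas $\operatorname{cov}_M$ is defined through \emph{open} covers. To repair this I would spend the extra composition folds on passing to interiors. Concretely, set $U_s:=\operatorname{int}\big((\theta\theta)_A[s]\big)$. Each $U_s$ is open and $\eta_A$-small, because by symmetry of $(\theta\theta)_A$ and Proposition \ref{pro:Bowenentouragesarules} one has $(\theta\theta)_A[s]^2\subseteq (\theta\theta)_A(\theta\theta)_A\subseteq (\theta\theta\theta\theta)_A\subseteq \eta_A$. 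To see that the $U_s$ cover $M$, take $m\in M$ with $(m,s)\in\theta_A$; for every $x$ with $(x,m)\in\theta_A$ one gets $(x,s)\in\theta_A\theta_A\subseteq(\theta\theta)_A$, i.e.\ $\theta_A[m]\subseteq(\theta\theta)_A[s]$. As $\theta_A$ is an entourage (Lemma \ref{lem:Bowenentourageisentourage}), the set $\operatorname{int}(\theta_A)[m]$ is an open neighbourhood of $m$ — it is the preimage of the open set $\operatorname{int}(\theta_A)$ under the continuous map $x\mapsto(x,m)$ and contains $m$ because $(m,m)\in\Delta_X\subseteq\operatorname{int}(\theta_A)$ — and it is contained in $\theta_A[m]\subseteq(\theta\theta)_A[s]$, so $m\in U_s$. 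Thus $\{U_s:s\in S\}$ is an open cover of $M$ of scale $\eta_A$ with at most $\operatorname{spa}_M(\theta_A)$ members, giving $\operatorname{cov}_M(\eta_A)\leq\operatorname{spa}_M(\theta_A)$. Taking the supremum over $y\in Y$ yields $\operatorname{cov}_p(\eta_A)\leq\operatorname{spa}_p(\theta_A)$, which together with the first paragraph closes the chain.
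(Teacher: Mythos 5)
Your proof is correct: the two right-hand inequalities follow fibrewise from Lemma \ref{lem:SepandCov}, and your choice of a symmetric open $\theta$ with $\theta\theta\theta\theta\subseteq\eta$, combined with the interior trick (using that $\operatorname{int}(\theta_A)[m]$ is an open neighbourhood of $m$ inside $\theta_A[m]\subseteq(\theta\theta)_A[s]$, so that the sections $(\theta\theta)_A[s]$ can be shrunk to genuinely open, $\eta_A$-small sets) correctly yields $\operatorname{cov}_p(\eta_A)\leq\operatorname{spa}_p(\theta_A)$ with $\theta$ independent of $A$. The paper gives no proof of this lemma (it is dismissed as ``a straight forward argument''), and your argument is exactly the uniform-space analogue of the standard metric chain $\operatorname{cov}[d<2\epsilon]\leq\operatorname{spa}[d<\epsilon]\leq\operatorname{sep}[d<\epsilon]\leq\operatorname{cov}[d<\epsilon]$ that the author evidently had in mind, so there is nothing to add.
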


\begin{theorem} \label{the:Entropyingenerality}
Let $\phi\colon G\times X \to X$ be an action of $G$, on a compact Hausdorff space $X$. % and $\mu$ a Haar Measure on $G$. 
Let furthermore $\psi$ be a factor of $\phi$ with factor map $p$. There holds
\[\operatorname{E}(\phi \overset{p}{\to} \psi)=\sup_{\eta \in \mathbb{B}_X} \limsup_{i\in I} \frac{\log(\operatorname{spa}_p(\eta_{A_i}))}{\mu(A_i)}=\sup_{\eta \in \mathbb{B}_X} \limsup_{i\in I} \frac{\log(\operatorname{sep}_p(\eta_{A_i}))}{\mu(A_i)}\]
% \begin{align*}
%\operatorname{E}(\phi \overset{p}{\to} \psi)=\sup_{\eta \in \mathbb{B}_X} \limsup_{i\in I} \frac{\log(\operatorname{spa}_p(\eta_{A_i}))}{\mu(A_i)}
%
%
%
%&=\sup_{\eta \in \mathbb{B}_X} \liminf_{i\in I} \frac{\log(\operatorname{spa}_p(\eta_{A_i}))}{\mu(A_i)}
%=\sup_{\eta \in \mathbb{B}_X} \limsup_{i\in I} \frac{\log(\operatorname{spa}_p(\eta_{A_i}))}{\mu(A_i)}\\
%&=\sup_{\eta \in \mathbb{B}_X} \liminf_{i\in I} \frac{\log(\operatorname{sep}_p(\eta_{A_i}))}{\mu(A_i)}
%=\sup_{\eta \in \mathbb{B}_X} \limsup_{i\in I} \frac{\log(\operatorname{sep}_p(\eta_{A_i}))}{\mu(A_i)},
%\end{align*}		
		for any  Van Hove net $(A_i)_{i\in I}$ in $G$ and any base $\mathbb{B}_X$ of $\mathbb{U}_X$. Furthermore the statement remains valid after replacing the limit superior by a limit inferior.
\end{theorem}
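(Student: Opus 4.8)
The plan is to sandwich the spanning and separating quantities between the covering quantity evaluated at two different scales, exploiting that the supremum over all entourages absorbs any change of scale. Throughout I write $c(\eta):=\limsup_{i\in I}\mu(A_i)^{-1}\log\operatorname{cov}_p(\eta_{A_i})$ and use that, by the Ornstein–Weiss lemma applied to the monotone, right-invariant, subadditive map $A\mapsto\log\operatorname{cov}_p(\eta_A)$, this $\limsup$ is in fact a genuine limit equal to $\operatorname{E}(\eta\mid\phi\overset{p}{\to}\psi)$ and independent of the Van Hove net; in particular the $\limsup$ and $\liminf$ versions of $c(\eta)$ coincide. Taking the supremum over a base gives $\operatorname{E}(\phi\overset{p}{\to}\psi)=\sup_{\eta\in\mathbb{B}_X}c(\eta)$ by Remark \ref{rem:topologicalconjugacy}(i).

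First I would record the easy half. By Lemma \ref{lem:SepandCov} one has $\operatorname{spa}_{p^{-1}(y)}(\theta)\leq\operatorname{sep}_{p^{-1}(y)}(\theta)\leq\operatorname{cov}_{p^{-1}(y)}(\theta)$ for every $y\in Y$ and every $\theta\in\mathbb{U}_X$; taking the supremum over $y$ and specialising $\theta=\eta_{A_i}$ yields $\operatorname{spa}_p(\eta_{A_i})\leq\operatorname{sep}_p(\eta_{A_i})\leq\operatorname{cov}_p(\eta_{A_i})$ for all $i$. Dividing by $\mu(A_i)$ and passing to $\limsup$ (respectively $\liminf$), then taking $\sup_{\eta\in\mathbb{B}_X}$, shows that each of the four spanning and separating quantities is bounded above by $\sup_{\eta}c(\eta)=\operatorname{E}(\phi\overset{p}{\to}\psi)$.

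For the reverse inequality I would invoke the interleaving Lemma \ref{lem:SpaSepCovNONStaticRelation}. Fixing $\eta\in\mathbb{B}_X$ and choosing $\theta\subseteq\eta$ as in that lemma gives $\operatorname{cov}_p(\eta_A)\leq\operatorname{spa}_p(\theta_A)$ for every compact $A$; dividing by $\mu(A_i)$ and taking $\liminf$ turns the left side into the genuine limit $c(\eta)$, so that
\[
c(\eta)\ \leq\ \liminf_{i\in I}\frac{\log\operatorname{spa}_p(\theta_{A_i})}{\mu(A_i)}.
\]
The one point that needs care is that $\theta$ need not lie in $\mathbb{B}_X$. I would remove this by noting that $\eta\mapsto\operatorname{spa}_p(\eta_A)$ is antitone (since $\theta\subseteq\eta$ makes a $\theta_A$-spanning set already $\eta_A$-spanning), whence the spanning quantity is base-independent exactly as $\operatorname{cov}$ is in Remark \ref{rem:topologicalconjugacy}(i): any $\theta\in\mathbb{U}_X$ contains a member of $\mathbb{B}_X$, which only enlarges $\operatorname{spa}_p$, so the two suprema agree. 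Thus the displayed right-hand side is dominated by the spanning $\liminf$-quantity, and taking $\sup_{\eta\in\mathbb{B}_X}$ yields $\operatorname{E}(\phi\overset{p}{\to}\psi)\leq\sup_{\eta\in\mathbb{B}_X}\liminf_{i}\mu(A_i)^{-1}\log\operatorname{spa}_p(\eta_{A_i})$.

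Combining the two halves squeezes the spanning $\liminf$-quantity equal to $\operatorname{E}(\phi\overset{p}{\to}\psi)$; since $\liminf\leq\limsup$ and the $\limsup$-quantity was already shown to be $\leq\operatorname{E}(\phi\overset{p}{\to}\psi)$, the spanning $\limsup$-quantity coincides too, and feeding this back through $\operatorname{spa}_p\leq\operatorname{sep}_p\leq\operatorname{cov}_p$ pins both separating quantities to the same value. I expect the only genuine obstacle to be the scale mismatch in Lemma \ref{lem:SpaSepCovNONStaticRelation}: the crucial lower bound on $\operatorname{cov}$ is available only at the finer entourage $\theta$, so the argument relies essentially on the supremum over entourages, together with the base-independence just noted, to erase the passage from $\eta$ to $\theta$.
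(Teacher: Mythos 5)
Your proof is correct and follows essentially the same route as the paper: the static chain $\operatorname{spa}_p\leq\operatorname{sep}_p\leq\operatorname{cov}_p$ for the upper bounds, Lemma \ref{lem:SpaSepCovNONStaticRelation} for the reverse inequality, and antitonicity in the entourage to pass between an arbitrary base and all of $\mathbb{U}_X$ (the paper simply performs this reduction at the start rather than at the end). The only cosmetic difference is that you derive the separating equalities by squeezing between the spanning and covering quantities, whereas the paper repeats the lemma-based argument for $\operatorname{sep}_p$ directly; both are valid.
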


\begin{proof}
	As $\limsup_{i\in I} {\log(\operatorname{spa}_p(\eta_{A_i}))}/{\mu(A_i)}$ and the other similar terms are increasing, whenever $\eta$ is decreasing with respect to set inclusion, it suffices to show the statement for $\mathbb{B}_X=\mathbb{U}_X$. By Lemma \ref{lem:SpaSepCovNONStaticRelation} it is immediate that for any $\eta\in \mathbb{U}_X$ there holds 
	\begin{align*}
		\operatorname{E}(\eta|\phi\overset{p}{\to}\psi)
			&\leq \sup_{\theta\in \mathbb{U}_X} \limsup_{i\in I} \frac{\log(\operatorname{spa}_p(\theta_{A_i}))}{\mu(A_i)}
			%\\
			%&\leq \sup_{\theta\in \mathbb{U}_X} \limsup_{i\in I} \frac{\log(\operatorname{sep}_p(\theta_{A_i}))}{\mu(A_i)}
			\leq \operatorname{E}(\phi\overset{p}{\to}\psi).
 	\end{align*}
	Taking the supremum over $\eta$ yields the first equality. Similar arguments show the statements about $\operatorname{sep}$ and the limit inferior. 
\end{proof}

\section{Relative topological entropy via lattices}\label{sec:Latticerestriction}

	In this section we provide a proof for Theorem \ref{theint:linkoflatticetogroup}. 
	Recall that for a map $f\colon A \to B$ and $M\subseteq A$ we denote by $f\big|_M$ the restriction $f\big|_M\colon M\to B\colon a\mapsto f(a)$. As before we assume $G$ to satisfy the Ornstein-Weiss lemma. 

\begin{proposition}\label{pro:MittelnalongDelone}
	Let $\Lambda$ be a relatively dense subset of $G$ and let $(A_i)_{i\in I}$ be a Van Hove net. Set $F_i:=A_i\cap \Lambda$. Let furthermore $f\colon \mathcal{K}(G)\to \mathbb{R}$ be a subadditive, right invariant and monotone mapping. If $\Lambda$ has a well defined uniform density $\operatorname{dens}(\Lambda)$, then there holds 
	\[\lim_{i\in I}\frac{f(A_i)}{\mu(A_i)}=\operatorname{dens}(\Lambda)\lim_{i\in I}\frac{f(K F_i)}{|F_i|}.\]
\end{proposition}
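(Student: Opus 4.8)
The plan is to deduce everything from the Ornstein--Weiss lemma on $G$ (valid by the standing assumption of this section) by sandwiching $KF_i$ between two auxiliary Van Hove nets whose measures are asymptotic to $\mu(A_i)$. Write $\alpha:=\lim_{i\in I}\frac{f(A_i)}{\mu(A_i)}$; this limit exists and is net--independent by the Ornstein--Weiss lemma. Since $F_i=A_i\cap\Lambda$, the definition of uniform density gives $\frac{|F_i|}{\mu(A_i)}\to\operatorname{dens}(\Lambda)$, and I first record that $\operatorname{dens}(\Lambda)>0$: as $K\Lambda=G$, every $g\in A_i$ satisfies $g=k\lambda$ with $\lambda=k^{-1}g\in\Lambda\cap K^{-1}A_i$, so $A_i\subseteq K(\Lambda\cap K^{-1}A_i)$ and hence $\mu(A_i)\le\mu(K)\,|\Lambda\cap K^{-1}A_i|$. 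Because $(K^{-1}A_i)_{i\in I}$ is again a Van Hove net with $\frac{\mu(K^{-1}A_i)}{\mu(A_i)}\to1$ by Proposition \ref{pro:VanHovenets+K}, net--independence of the density yields $\operatorname{dens}(\Lambda)=\lim_{i}\frac{|\Lambda\cap K^{-1}A_i|}{\mu(A_i)}\ge\mu(K)^{-1}>0$. Consequently it suffices to prove $\lim_{i\in I}\frac{f(KF_i)}{\mu(A_i)}=\alpha$, for then dividing by $\frac{|F_i|}{\mu(A_i)}\to\operatorname{dens}(\Lambda)$ gives $\frac{f(KF_i)}{|F_i|}\to\frac{\alpha}{\operatorname{dens}(\Lambda)}$, which is exactly the claimed identity.

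For the upper bound, $F_i\subseteq A_i$ and monotonicity give $f(KF_i)\le f(KA_i)$; by Proposition \ref{pro:VanHovenets+K} the net $(KA_i)_{i\in I}$ is a Van Hove net with $\frac{\mu(KA_i)}{\mu(A_i)}\to1$, so the Ornstein--Weiss lemma yields $\frac{f(KA_i)}{\mu(A_i)}=\frac{f(KA_i)}{\mu(KA_i)}\cdot\frac{\mu(KA_i)}{\mu(A_i)}\to\alpha$, whence $\limsup_i\frac{f(KF_i)}{\mu(A_i)}\le\alpha$. For the lower bound I use relative density in the opposite direction. Set $A_i^{\circ}:=\bigcap_{k\in K}kA_i=\{g\in G;\,K^{-1}g\subseteq A_i\}$. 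If $g\in A_i^{\circ}$, write $g=k_0\lambda_0$ with $k_0\in K$, $\lambda_0\in\Lambda$ (possible since $K\Lambda=G$); then $\lambda_0=k_0^{-1}g\in K^{-1}g\subseteq A_i$, so $\lambda_0\in A_i\cap\Lambda=F_i$ and $g=k_0\lambda_0\in KF_i$. Thus $A_i^{\circ}\subseteq KF_i$, and monotonicity gives $f(A_i^{\circ})\le f(KF_i)$.

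It remains to show $\frac{f(A_i^{\circ})}{\mu(A_i)}\to\alpha$, and this is the main obstacle, since $A_i^{\circ}$ is an erosion of $A_i$ rather than a set manifestly controlled by $A_i$; note also that one cannot bound $f$ on the eroded-away shell by its measure, as $A\mapsto f(A)/\mu(A)$ need not be bounded (Remark \ref{rem:boundednessnotgiven}(iv)). I circumvent this by realizing $A_i^{\circ}$ as containing a genuine Van Hove net. Put $K':=K\cup\{e_G\}$ and $B_i:=\bigcap_{k\in K'}kA_i\subseteq A_i^{\circ}$; since $e_G\in K'$ one has $B_i\subseteq A_i$, and a direct computation gives $A_i\setminus B_i\subseteq\partial_{K'}A_i$, so $\frac{\mu(B_i)}{\mu(A_i)}\to1$. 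Moreover $\overline{B_i^{c}}\subseteq K'\overline{A_i^{c}}$ and $\overline{B_i}\subseteq\overline{A_i}$ yield $\partial_L B_i\subseteq L\overline{A_i}\cap LK'\overline{A_i^{c}}\subseteq\partial_{LK'}A_i$ for every compact $L$, so $(B_i)_{i\in I}$ satisfies the Van Hove condition; it is finally somewhere dense because for large $i$ the interior of $A_i$ contains a translate of $K'$. The Ornstein--Weiss lemma applied to $(B_i)_{i\in I}$ then gives $\frac{f(B_i)}{\mu(A_i)}=\frac{f(B_i)}{\mu(B_i)}\cdot\frac{\mu(B_i)}{\mu(A_i)}\to\alpha$, and the chain $B_i\subseteq A_i^{\circ}\subseteq KF_i$ together with monotonicity gives $\liminf_i\frac{f(KF_i)}{\mu(A_i)}\ge\alpha$. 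Combined with the upper bound this proves $\lim_i\frac{f(KF_i)}{\mu(A_i)}=\alpha$, and hence the proposition. The only delicate verifications, both routine, are the Van Hove property of the eroded net $B_i$ (the boundary and somewhere-dense estimates above) and the positivity of $\operatorname{dens}(\Lambda)$.
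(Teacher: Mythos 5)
Your proof is correct in substance and is, at its core, the same sandwich argument as the paper's: bound $f(KF_i)$ from above by $f(KA_i)$, where $(KA_i)_{i\in I}$ is a Van Hove net with $\mu(KA_i)/\mu(A_i)\to 1$ (Proposition \ref{pro:VanHovenets+K}); bound it from below by $f$ of an inner Van Hove net contained in $KF_i$; apply the Ornstein--Weiss lemma to both; and finally divide by $|F_i|/\mu(A_i)\to\operatorname{dens}(\Lambda)$. The difference lies in the realization of the inner net: the paper takes $\overline{A_i\setminus\partial_M A_i}$ for a compact symmetric neighbourhood $M$ of $K$, proves its Van Hove property via $(A_i\setminus\partial_M A_i)^c=M\overline{A_i^c}$, and uses $\overline{A_i\setminus\partial_M A_i}\subseteq A_i\setminus\partial_K A_i\subseteq KF_i$; you instead take the erosion $B_i=\bigcap_{k\in K'}kA_i$ with $K'=K\cup\{e_G\}$ and use $B_i\subseteq A_i^{\circ}\subseteq KF_i$. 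Your erosion is automatically compact and dispenses with the symmetry and neighbourhood bookkeeping, and your explicit argument that $\operatorname{dens}(\Lambda)\geq\mu(K)^{-1}>0$ makes rigorous the division by $|F_i|/\mu(A_i)$, which the paper performs tacitly; both are small improvements.

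The one step that needs repair is your verification that $(B_i)_{i\in I}$ is finally somewhere dense. The reason you give --- that for large $i$ the interior of $A_i$ contains a translate of $K'$ --- is itself an unproven claim, and it is essentially equivalent to what you must show: one checks that $\operatorname{int}(B_i)\neq\emptyset$ holds precisely when $K'^{-1}g\subseteq\operatorname{int}(A_i)$ for some $g\in G$, so as written the justification begs the question. Note also that the estimate $\mu(B_i)/\mu(A_i)\to 1$ cannot substitute for it, since a closed set of positive Haar measure may have empty interior. The gap is closed by the identity $\partial_{K'}A_i=\overline{K'A_i}\setminus\operatorname{int}\bigl(\bigcap_{k\in K'}kA_i\bigr)$ recorded in the paper's Remark 2.2(ii): it yields $\mu(\operatorname{int}(B_i))\geq\mu(A_i)-\mu(\partial_{K'}A_i)$, which is eventually positive by the Van Hove property of $(A_i)_{i\in I}$ and the fact that eventually $\mu(A_i)>0$, so $B_i$ eventually has nonempty interior. (In fairness, the paper's own proof asserts that $\overline{A_i\setminus\partial_M A_i}$ forms a Van Hove net without checking the finally-somewhere-dense condition either, so this defect is shared with the original; with the above patch your argument is complete.)
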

\begin{proof}
Let $K$ be a compact and symmetric subset of $G$ that contains $e_G$ and such that $K\Lambda=G$. Let furthermore $M$ be a compact and symmetric neighbourhood of $K$. Then there holds
\[1\geq \frac{\mu\left(\overline{A_i\setminus \partial_M A_i}\right)}{\mu(A_i)}\geq \frac{\mu(A_i\setminus \partial_M A_i)}{\mu(A_i)}\geq \frac{\mu(A_i)-\mu(\partial_M A_i)}{\mu(A_i)}\rightarrow 1. \]
	Thus $\lim_{i\in I}{\mu\left(\overline{A_i\setminus \partial_M A_i}\right)}/{\mu(A_i)}=1$. Furthermore a straight forward argument shows $(\overline{A_i\setminus \partial_M A_i})^c\subseteq ({A_i\setminus \partial_M A_i})^c= M\overline{A_i^c}$. Thus for $C\subseteq G$ compact we obtain 
	\[\partial_C\overline{A_i\setminus \partial_M A_i}=C\overline{A_i\setminus \partial_M A_i}\cap C\overline{\overline{A_i\setminus \partial_M A_i}^c}\subseteq C A_i \cap CM\overline{A_i^c}\subseteq \partial_{CM} A_i\]
	and we obtain $(\overline{A_i\setminus \partial_M A_i})_{i\in I}$ to be a Van Hove net in $G$. 
	As $M$ is a compact neighbourhood we obtain $\overline{A_i\setminus \partial_M A_i}\subseteq A_i\setminus \partial_K A_i$ and one easily shows $A_i\setminus \partial_K A_i\subseteq KF_i$. From Proposition \ref{pro:VanHovenets+K} we know that $(KA_i)_{i\in I}$ is a Van Hove net in $G$ with $\lim_{i\in I}\mu(KA_i)/\mu(A_i)=1$ and compute  
\begin{align*}
	\lim_{i\in I}\frac{f\left(A_i\right)}{\mu\left(A_i\right)}&=\lim_{i\in I}\frac{f\left(\overline{A_i\setminus \partial_M A_i}\right)}{\mu\left(\overline{A_i\setminus \partial_M A_i}\right)}
	\leq\liminf_{i\in I}\frac{f(KF_i)}{\mu(A_i)}\\
	&\leq\limsup_{i\in I}\frac{f(KF_i)}{\mu(A_i)}
	\leq \lim_{i\in I}\frac{f(KA_i)}{\mu(KA_i)}
	=\lim_{i\in I}\frac{f(A_i)}{\mu(A_i)}.
\end{align*}
Thus the statement follows from $\operatorname{dens}(\Lambda)=\lim_{i\in I}\frac{|F_i|}{\mu(A_i)}$.
%		
% A straight forward argument shows $F_i\subseteq A_i\subseteq \partial_K A_i \cup KF_i$ for all $i\in I$. From Proposition \ref{pro:VanHovenets+K} we know that $(KA_i)_{i\in I}$ is a Van Hove net in $G$ with $\lim_{i\in I}\mu(KA_i)\mu(A_i)^{-1}=1$ and compute  
%\begin{align*}
%	\lim_{i\in I}\frac{f(A_i)}{\mu(A_i)}&\leq
%	\lim_{i\in I}\frac{f(\partial_K A_i)}{\mu(A_i)}+\liminf_{i\in I}\frac{f(KF_i)}{\mu(A_i)}\\
%	&\leq 	0+\limsup_{i\in I}\frac{f(KF_i)}{\mu(A_i)}\\
%	&\leq \limsup_{i\in I}\frac{f(KA_i)}{\mu(A_i)}
%	=\lim_{i\in I}\frac{f(KA_i)}{\mu(KA_i)}
%	=\lim_{i\in I}\frac{f(A_i)}{\mu(A_i)}.
%\end{align*}
%Thus the statement follows from $\operatorname{dens}(\Lambda)=\lim_{i\in I}\frac{|F_i|}{\mu(A_i)}$.
\end{proof}

 \begin{theorem}\label{the:linkoflatticetogroup}
Let $\phi$ be an action of $G$ on a compact Hausdorff space $X$. Let furthermore $\psi$ be a factor of $\phi$ via factor map $p\colon X\to Y$. Let $\Lambda$ be a relatively dense subset of $G$ and let $(A_i)_{i\in I}$ be a Van Hove net. Set $F_i:=A_i\cap \Lambda$.
	\begin{itemize}
		\item[(i)] If $\Lambda$ has a well defined uniform density $\operatorname{dens}(\Lambda)$, then there holds 
\[\operatorname{E}(\phi\overset{p}{\to}\psi)=\operatorname{dens}(\Lambda)\sup_{\eta\in \mathbb{U}_X}\liminf_{i\in I}\frac{\log(\operatorname{cov}_p(\eta_{F_i}))}{|F_i|}=\operatorname{dens}(\Lambda)\sup_{\eta\in \mathbb{U}_X}\limsup_{i\in I}\frac{\log(\operatorname{cov}_p(\eta_{F_i}))}{|F_i|}\]
and these statements remain valid if we consider $\operatorname{sep}_p$ and $\operatorname{spa}_p$.
	\item[(ii)] If $\Lambda$ is a uniform lattice, then there holds
	\[\operatorname{E}(\phi\overset{p}{\to}\psi)=\operatorname{dens}(\Lambda)\operatorname{E}\left(\phi\big|_{\Lambda\times X}\overset{p}{\to} \psi\big|_{\Lambda\times Y}\right).\]
	\end{itemize}
\end{theorem}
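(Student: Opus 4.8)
The plan is to derive part (i) directly from Proposition \ref{pro:MittelnalongDelone} and then to obtain part (ii) from part (i) by sandwiching $F_i=A_i\cap\Lambda$ between the auxiliary nets constructed in Lemma \ref{lem:constructingFhatandFcheck}.

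\emph{Part (i).} Fix $\eta\in\mathbb{U}_X$. Since $\mathcal{K}(G)\ni A\mapsto \log(\operatorname{cov}_p(\eta_A))$ is monotone, right invariant and subadditive, Proposition \ref{pro:MittelnalongDelone} applies to this function and yields, for a compact symmetric $K$ with $e_G\in K$ and $K\Lambda=G$,
\[\operatorname{E}(\eta|\phi\overset{p}{\to}\psi)=\operatorname{dens}(\Lambda)\lim_{i\in I}\frac{\log(\operatorname{cov}_p(\eta_{KF_i}))}{|F_i|}.\]
The remaining work is to remove the compact factor $K$. I would write $\eta_{KF_i}=(\eta_K)_{F_i}$ using Proposition \ref{pro:Bowenentouragesarules}, and observe that $e_G\in K$ forces $\eta_K\subseteq\eta$, hence $(\eta_K)_{F_i}\subseteq\eta_{F_i}$ and $\operatorname{cov}_p(\eta_{F_i})\le\operatorname{cov}_p(\eta_{KF_i})$. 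Writing $c(\eta)$ for the existing limit above and $a(\eta),b(\eta)$ for the liminf and the limsup of $\log(\operatorname{cov}_p(\eta_{F_i}))/|F_i|$, this comparison gives $a(\eta)\le b(\eta)\le c(\eta)$, while substituting the entourage $\eta_K$ for $\eta$ gives $c(\eta)=a(\eta_K)$. Taking suprema over $\eta\in\mathbb{U}_X$ collapses the three quantities, so $\sup_\eta a(\eta)=\sup_\eta b(\eta)=\sup_\eta c(\eta)$, and after multiplying by $\operatorname{dens}(\Lambda)$ each of the three equals $\operatorname{E}(\phi\overset{p}{\to}\psi)$. The $\operatorname{sep}_p$ and $\operatorname{spa}_p$ versions then follow by the same squeezing over entourages as in Theorem \ref{the:Entropyingenerality}: applying Lemma \ref{lem:SpaSepCovNONStaticRelation} to each compact $F_i$ gives a $\theta\subseteq\eta$ with $\operatorname{cov}_p(\eta_{F_i})\le\operatorname{spa}_p(\theta_{F_i})\le\operatorname{sep}_p(\theta_{F_i})\le\operatorname{cov}_p(\theta_{F_i})$, and passing to liminf/limsup and then to the supremum identifies all three cardinalities.

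\emph{Part (ii).} A uniform lattice is relatively dense and has $\operatorname{dens}(\Lambda)=\mu(C)^{-1}$ by Remark \ref{remark:measures of Lambda and G}, so part (i) is available. The extra ingredient is that $\Lambda$ is a discrete amenable group, so $\mathcal{K}(\Lambda)\ni F\mapsto\log(\operatorname{cov}_p(\eta_F))$ (with the Bowen entourages of the restricted action, which agree with those of $\phi$ on $\Lambda$) is monotone, right invariant and subadditive, and the Ornstein-Weiss lemma holds in $\Lambda$. Let $\check{F}_i,\hat{F}_i$ be the Van Hove nets in $\Lambda$ of Lemma \ref{lem:constructingFhatandFcheck}. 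Since $e_G\in C$, one checks $\check{F}_i\subseteq F_i\subseteq\hat{F}_i$, and Lemma \ref{lem:constructingFhatandFcheck} gives $|\hat{F}_i|/|\check{F}_i|\to1$. Monotonicity then yields $\log(\operatorname{cov}_p(\eta_{\check{F}_i}))\le\log(\operatorname{cov}_p(\eta_{F_i}))\le\log(\operatorname{cov}_p(\eta_{\hat{F}_i}))$; since $\log\operatorname{cov}_p\ge0$ and the Ornstein-Weiss limits along $\check{F}_i$ and $\hat{F}_i$ coincide and equal $\operatorname{E}(\eta|\phi\big|_{\Lambda\times X}\overset{p}{\to}\psi\big|_{\Lambda\times Y})$, the squeezing computation from the proof of Theorem \ref{the:OrnsteinWeisslemma} forces
\[\lim_{i\in I}\frac{\log(\operatorname{cov}_p(\eta_{F_i}))}{|F_i|}=\operatorname{E}\left(\eta\Big|\phi\big|_{\Lambda\times X}\overset{p}{\to}\psi\big|_{\Lambda\times Y}\right).\]
Taking the supremum over $\eta$ and inserting the identity of part (i) gives the claim.

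\emph{Main obstacle.} I expect the genuine difficulty to sit in part (i), namely reconciling the averages over $KF_i$ delivered by Proposition \ref{pro:MittelnalongDelone} with the desired averages over $F_i$. The resolution — that the compact factor $K$ becomes invisible after the supremum over all entourages, because $\eta_K$ is itself an entourage with $\eta_K\subseteq\eta$ — is the crux. Everything else reduces to monotonicity, the Bowen-entourage calculus of Proposition \ref{pro:Bowenentouragesarules}, and the standard squeezing arguments already used earlier in the paper.
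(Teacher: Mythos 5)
Your proposal is correct, and its two halves relate to the paper differently. Part (i) is essentially the paper's own argument: the paper runs the same chain, namely Proposition \ref{pro:MittelnalongDelone} applied to $A\mapsto\log(\operatorname{cov}_p(\eta_A))$, the identity $\eta_{KF_i}=(\eta_K)_{F_i}$ from Proposition \ref{pro:Bowenentouragesarules}, and the observation that $\eta_K$ is itself an entourage (Lemma \ref{lem:Bowenentourageisentourage}), so that the compact factor $K$ is swallowed by the supremum over $\mathbb{U}_X$; your identification of this absorption as the crux matches the paper exactly. Part (ii) is where you genuinely diverge: the paper does not route (ii) through (i) at all, but instead applies Theorem \ref{the:OrnsteinWeisslemma} to $f(A)=\log(\operatorname{cov}_p(\eta_A))$, obtaining $\mu(C)\operatorname{E}(\phi\overset{p}{\to}\psi)=\sup_{\eta}\lim_i\log(\operatorname{cov}_p(\eta_{\overline{C}E_i}))/|E_i|$ along any Van Hove net $(E_i)$ in $\Lambda$, and then absorbs the thickening $\overline{C}$ into the entourage supremum a second time (``similarly as in (i)''). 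You instead keep the net $F_i=A_i\cap\Lambda$ from part (i) and transfer it to the lattice by the sandwich $\check F_i\subseteq F_i\subseteq\hat F_i$ of Lemma \ref{lem:constructingFhatandFcheck}, using the Ornstein--Weiss lemma in the discrete group $\Lambda$ plus $|\hat F_i|/|\check F_i|\to1$ and nonnegativity of $\log\operatorname{cov}_p$ to force the limit along $F_i$ to equal the lattice entropy term. This works: note the inclusions rely on the paper's standing conventions that $e_G\in C$ (giving $\check F_i\subseteq F_i$) and that $C$ is a genuine fundamental domain (uniqueness of the decomposition gives $C\hat F_i\cap\Lambda=\hat F_i$, hence $F_i\subseteq\hat F_i$). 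The trade-off: the paper reuses Theorem \ref{the:OrnsteinWeisslemma} as a black box, where the sandwich is already hidden, at the cost of invoking the absorption trick twice; your route reuses part (i)'s absorption and repeats the sandwich explicitly, with the small bonus of showing that the intersections $A_i\cap\Lambda$ themselves behave like a Van Hove net in $\Lambda$ for entropy purposes. One cosmetic caution: in the $\operatorname{sep}_p$/$\operatorname{spa}_p$ step, Lemma \ref{lem:SpaSepCovNONStaticRelation} provides a single $\theta\subseteq\eta$ valid for all compact sets simultaneously; your phrase ``applying [it] to each compact $F_i$'' must be read that way, since a $\theta$ depending on $i$ would not survive the passage to $\liminf$ and $\limsup$.
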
 

\begin{proof}
The second equality in (i) follows from the first as in the proof of Theorem \ref{the:Entropyingenerality}. 
To show the first equality consider a compact subset $K\subseteq G$ with $K\Lambda=G$. From Proposition \ref{pro:MittelnalongDelone} we obtain	
	\begin{align*}
	\operatorname{E}\left(\phi\overset{p}{\to} \psi\right)&=\sup_{\eta\in \mathbb{U}_X}\lim_{i\in I} \frac{\log(\operatorname{cov}_p(\eta_{A_i}))}{\mu(A_i)}\\
	&=	\sup_{\eta\in \mathbb{U}_X}\lim_{i\in I} \frac{\log(\operatorname{cov}_p(\eta_{K F_i}))}{|F_i|}\operatorname{dens}(\Lambda) \\
		&\leq \sup_{\eta\in \mathbb{U}_X} \lim_{i\in I} \frac{\log(\operatorname{cov}_p((\eta_K)_{F_i}))}{|F_i|}\operatorname{dens}(\Lambda) \\
	&\leq \sup_{\kappa\in \mathbb{U}_X}\liminf_{i\in I} \frac{\log(\operatorname{cov}_p(\kappa_{F_i}))}{|F_i|}\operatorname{dens}(\Lambda) \\
		&\leq \sup_{\kappa\in \mathbb{U}_X}\limsup_{i\in I} \frac{\log(\operatorname{cov}_p(\kappa_{F_i}))}{|F_i|}\operatorname{dens}(\Lambda) \\
	%&\leq \sup_{\kappa\in \mathbb{U}_X}\liminf_{i\in I} \frac{\log(\operatorname{cov}_p(\kappa_{A_i}))}{|F_i|}\operatorname{dens}(\Lambda) \\
	&\leq \sup_{\kappa\in \mathbb{U}_X}\lim_{i\in I} \frac{\log(\operatorname{cov}_p(\kappa_{A_i}))}{\mu(A_i)}. 
	\end{align*}
	
%	To show (ii) it suffices to prove that $(F_i)_{i\in I}$ is a Van Hove net. Let $C\subseteq G$ be a Fundamental domain of $\Lambda$ and set $K:=\overline{C}$.
%	 Denote by $\partial^\Lambda_E F_i$ the Van Hove boundary with respect to $\Lambda$ for some finite set $E\subseteq \Lambda$. Then a similar argument as in the proof of Lemma \ref{lem:constructingFhatandFcheck} yields $\partial^\Lambda_E F_i\subseteq \partial_E^G C F_i\subseteq \partial_E^G C A_i.$ 
%	Hence \[0\leq \operatorname{dens}(\Lambda)\limsup_{i\in I}\frac{|\partial_E^\Lambda F_i|}{|F_i|}\leq \limsup_{i\in I}\frac{\mu(C\partial_E^G C A_i)}{\mu(A_i)}=%\leq \limsup_{i\in I} \frac{\mu(\partial_{KE}^G K A_i)}{\mu(A_i)}=
%	\lim_{i\in I}\frac{\mu(\partial_{KE}^G K A_i)}{\mu(KA_i)}=0.\]
%
%% 
%------------
This shows (i). To show (ii) note first that every uniform lattice has a well defined uniform density which is given by $\operatorname{dens}(\Lambda)=\mu(C)^{-1}$ for any fundamental domain $C\subseteq G$ of $\Lambda$. We use 
Theorem \ref{the:OrnsteinWeisslemma} to obtain that for any Van Hove net $(E_i)_{i\in I}$ in $\Lambda$ there holds
$\mu(C)\operatorname{E}(\phi\overset{p}{\to}\psi)= \sup_{\eta \in \mathbb{U}_X} \lim_{i\in I}  {\log(\operatorname{cov}_p(\eta_{\overline{C}E_i}))}{|E_i|}^{-1}$.
As
\begin{align*}
\operatorname{E}\left(\phi\big|_{\Lambda\times X}\overset{p}{\to} \psi\big|_{\Lambda\times Y}\right)&= \sup_{\eta \in \mathbb{U}_X} \lim_{i\in I}  \frac{\log(\operatorname{cov}_p(\eta_{\overline{C}E_i}))}{|E_i|}
\end{align*}
follows similarly as in (i) we obtain the statement. 				
\end{proof}

\begin{remark}
Note that for $n\in \mathbb{N}$ the set $\{0,\cdots,n-1\}$ is a fundamental domain for the uniform lattice $n \mathbb{Z}$ in $\mathbb{Z}$. We thus obtain from Theorem \ref{the:linkoflatticetogroup} for every homeomorphism $f\colon X\to X$ the well known formula $n \operatorname{E}(f)= \operatorname{E}(f^n). $
	Here  $\operatorname{E}(g)$ abbreviates the topological entropy of the flow $\phi\colon \mathbb{Z}\times X\to X$ with $\phi(n,x)=g^n(x)$ for a homeomorphism $g\colon X\to X$. Thus $\operatorname{E}(f^n)$ is the entropy of the flow $(m,x)\mapsto f^m(x)$ restricted to $n \mathbb{Z}\times X\to X$. 
\end{remark}

\section{Bowen's formula}\label{sec:The Bowen entropy formula for actions of groups that contain a uniform lattice}

In this section we present a proof of Theorem \ref{theint:Bowenstheoremgeneral}. The following consequences can be drawn directly from this theorem and the literature \cite{Yan}.
%We will first see that the result follows from the variational principle and the Rohlin-Abramov theorem, which are at hand in the context of countable discrete groups and Theorem \ref{the:linkoflatticetogroup}.  
	
	\begin{remark}
%		Taking $\rho$ as the action on a one point space in the first inequality yields, that
%		$\max\{\operatorname{E}(\phi \overset{p}{\to}\psi),\operatorname{E}(\psi)\} \leq \operatorname{E}(\phi) .$

 If we take $\rho$ as the action on a one point space, we obtain the classical formulation of Bowen's formula for the entropy of factors from the second inequality.
Furthermore we obtain $\operatorname{E}(\phi)=\operatorname{E}(\phi \overset{p}{\to}\psi)$, whenever $\operatorname{E}(\psi)=0$; and $\operatorname{E}(\phi)=\operatorname{E}(\psi)$, whenever $\operatorname{E}(\phi \overset{p}{\to}\psi)=0$. 
If we assume $G$ to be non-compact and  $X$ and $Y$ to be compact metrizable spaces the latter assumption $\operatorname{E}(\phi \overset{p}{\to} \psi)=0$ is satisfied under each of the following conditions. 
\begin{itemize}
	\item[(i)] $p$ is a distal factor map, i.e. for $y\in Y$ all pairs of distinct points in $p^{-1}(y)$ are distal\footnote{
We call two points $x,x'\in X$ \emph{distal}, whenever there is $\eta\in \mathbb{U}_X$ such that  $(x,x')\notin \eta_g$ for all $g\in G$. 
	}. 
	\item[(ii)] $p$ is a countable to one factor map, i.e. for $y\in Y$ the fiber $p^{-1}(y)$ is countable.
\end{itemize}
\begin{proof}
	The statement in (i) implies $p$ to be a distal factor map from the action $\phi\big|_{\Lambda\times X}$ to $\psi\big|_{\Lambda\times Y}$ for every countable uniform lattice $\Lambda\subseteq G$. We thus obtain the statement from \cite[Corollary 6.7]{Yan} and Theorem \ref{the:linkoflatticetogroup}. We get (ii) directly from \cite[Theorem 5.7]{Yan} and Theorem \ref{the:linkoflatticetogroup}. Note that we restrict to metric spaces, as the statements in \cite{Yan} are presented for compact metric spaces. 
\end{proof}

	\end{remark}
	
	%We will obtain these results, by introducing measure theoretical entropy for actions of discrete amenable groups. This approach follows \cite{Yan}. Note that there is no straight forward generalization of the related definitions to non-discrete groups. In \cite[Theorem 5.1]{Yan} it is shown, that for actions of countable amenable discrete groups on compact metric spaces the variational principle is valid. We will see, that this statement can be easily generalized to the setting of discrete amenable groups acting on compact Hausdorff spaces. Furthermore in \cite[Theorem 3.1]{Yan} it is shown, that the Rohlin-Abramov theorem (see Proposition \ref{pro:RohlinAbramov} below) is valid. The combination of these results with Theorem \ref{the:linkoflatticetogroup} will then yield Theorem \ref{the:Bowenstheoremgeneral}. Note furthermore, that it is an open question, whether one can define measure theoretical relative entropy for non-discrete amenable group actions and establish a variational principle.

\subsection{Measure theoretic Relative Entropy for actions of countable discrete amenable groups}
	We give a brief introduction into the theory of measure theoretical relative entropy of actions of discrete amenable groups, presented in various texts, such as \cite{ward1992abramov,danilenko2001entropy,weiss2003actions,ollagnier2007ergodic,Yan,downarowicz2019symbolic}, in order to state the variational principle and the Rohlin-Abramov theorem. 
	Let $X$ be a compact Hausdorff space. By $\mathcal{B}_X$ we denote the Borel $\sigma$-algebra. Furthermore we denote the set of all regular Borel probability measures by $\mathcal{M}(X)$. By the Riesz-Markov theorem we can identify $\mathcal{M}(X)$ with the convex cone base of all positive functionals in $C(X)$ that map the unit $(X\to \mathbb{R};x \mapsto 1)$ to $1$. We equip $\mathcal{M}(X)$ with the restricted weak*-topology and obtain a compact topological space from the Banach-Alaoglu theorem. For a reference see \cite[Theorem E.11]{eisner2015operator}.
%	
%	As this set is closed and contained in the weak*-compact unit ball, we can equip $\mathcal{M}(X)$ with the restricted weak*-topology, to obtain a compact topological space.
%	
%	
%	set of all (positive) Borel measures with the cone of all positive functionals on the Banach space $C(X)$.  
%For a reference see \cite[Theorem E.11]{eisner2015operator}.
% The set $\mathcal{M}(X)$ can be identified with the convex cone base of all positive functionals in $C(X)$ that map the unit $(X\to \mathbb{R};x \mapsto 1)$ to $1$. As this set is closed and contained in the weak*-compact unit ball, we can equip $\mathcal{M}(X)$ with the restricted weak*-topology, to obtain a compact topological space. 
	
	 A family $\alpha$ of pairwise disjoint Borel-measurable non empty subsets of $X$ with union $X$ is called a \emph{measurable partition} of $X$. We denote the set of all finite partitions of $X$ by $\mathcal{P}_X$. The \emph{refinement} of two partitions $\alpha,\beta\in \mathcal{P}_X$ is the partition $\alpha \vee \beta:=\{A\cap B;\, A\in \alpha\text{ and }B \in \beta\}\setminus \{\emptyset\}$. Similarly the refinement of a finite number of partitions is defined. %$\alpha_1,\cdots,\alpha_n$ and denote this partition by $\bigvee_{i=1}^n\alpha_i$. 
	Let  $\phi\colon \Lambda\times X \to X$ be an action of a countable discrete amenable group $\Lambda$ on a compact Hausdorff space. For a finite subset $F\subseteq \Lambda$ we denote by $\alpha_F$ the refinement of the partitions $\{(\phi^g)^{-1}(A);\, A\in \alpha\}$, where $g$ ranges over $F$. 	
	A measure $\nu \in \mathcal{M}(X)$ is called $\phi$- invariant, if $\nu(A)=\nu(\phi^g(A))$ for every $g\in G$. We denote by $\mathcal{M}_\phi$ the set of all $\phi$-invariant $\nu \in \mathcal{M}(X)$. %If $p\colon X \to Y$ is a factor map to some factor $\psi\colon \Lambda\times Y\to Y$ that acts on a compact Hausdorff space $Y$ and $(\nu,\nu')\in \mathcal{M}_\phi(X)\times \mathcal{M}_\psi(Y)$ we say that $p$ is 
		 Every continuous map $p\colon X \to Y$ to some compact Hausdorff space is measurable with respect to the Borel $\sigma$-algebras and $p^{-1}(\mathcal{B}_Y)$ is a sub-$\sigma$-algebra of $\mathcal{B}_X$. For $A\in \mathcal{B}_X$ and $\nu\in \mathcal{M}_\phi$ let $\mathbb{E}_{\nu,p}(\chi_A)$ be the conditional expectation of the characteristic function $\chi_A$ of $A$ with respect to $p^{-1}(\mathcal{B}_X)$.	
	For $\alpha \in \mathcal{P}_X$	we define 
\[H_{\nu,p}(\alpha):=- \sum_{A\in \alpha } \int_X  \mathbb{E}_{\nu,p}(\chi_A)\log(\mathbb{E}_{\nu,p}(\chi_A))d\nu.\]
	As presented in \cite{Yan} 
	%$p^{-1}(\mathcal{B}_X)$ is a invariant under $\phi$ up to sets of measure $0$, and every amenable discrete group is an Ornstein-Weiss group, we consider 
	the Ornstein-Weiss lemma can be applied to $\mathcal{F}(\Lambda)\ni  F\mapsto H_{\nu,p}(\alpha_F)$ for every $\alpha\in \mathcal{P}_X$ to obtain that
 $\operatorname{E}_\nu(\alpha|\phi\overset{p}{\to}\psi):=\lim_{i\in I} {H_{\nu,p}(\alpha_{F_i})}{|F_i|}^{-1}$
	exists and is independent of the choice of the Van Hove net $(F_i)_{i\in I}$ in $\Lambda$. The \emph{relative measure theoretical entropy} of $\phi$ under the condition $\psi$ is given by \[\operatorname{E}_\nu(\phi\overset{p}{\to}\psi):=\sup_{\alpha \in \mathcal{P}_X} \operatorname{E}_\nu(\alpha|\phi\overset{p}{\to}\psi).\]
	The Rohlin-Abramov theorem for actions of countable amenable groups appeared firstly in \cite{ward1992abramov} and can be also found in \cite{glasner2000entropy,danilenko2001entropy,dooley2015local,Yan}. For a continuous surjective mapping $p\colon X \to Y$ between compact Hausdorff spaces and a $\nu \in \mathcal{M}(X)$ the \emph{push forward measure} is defined by $p_*\nu\in \mathcal{M}(Y)$ by $p_*\nu(M):=\nu(p^{-1}(M))$ for $M\in \mathcal{B}_Y$. %This restricts to a mapping $p_*\colon \mathcal{M}(X)\to \mathcal{M}(Y)$. 
	
	\begin{proposition} (Rohlin-Abramov Theorem)%\footnote{TO DO: This is there only stated for countable group actions, hence yields the result only for amenable groups with countable uniform lattices. Check for not countable groups!!!!}
	\label{pro:RohlinAbramov}
		Let $\phi$, $\psi$ and $\rho$ be actions of a countable discrete amenable group $\Lambda$ on compact Hausdorff spaces $X$, $Y$ and $Z$ respectively. Let $\psi$ be a factor of $\phi$ via the factor map $p$ and $\rho$ be a factor of $\psi$ via the factor map $q$, i.e.\ $\phi \overset{p}{\to}\psi\overset{q}{\to}\rho$. Then for $(\nu_X,\nu_Y,\nu_Z)\in \mathcal{M}_\phi(X)\times \mathcal{M}_\psi(Y)\times \mathcal{M}_\rho(Z)$ that satisfy $p_*\nu_X=\nu_Y$ and $q_*\nu_Y=\nu_Z$ there holds
		\begin{align*}
		\operatorname{E}_{\nu_X}(\phi \overset{q\circ p}{\to}\rho)= \operatorname{E}_{\nu_X}(\phi \overset{p}{\to}\psi)+\operatorname{E}_{\nu_Y}(\psi \overset{q}{\to}\rho). 
		\end{align*}
	\end{proposition}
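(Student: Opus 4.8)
The plan is to reinterpret all three quantities as conditional dynamical entropies over nested invariant sub-$\sigma$-algebras and then run the Abramov--Rohlin addition formula at the level of finite partitions, passing to the limit with the Ornstein--Weiss lemma. Write $\mathcal{C}:=p^{-1}(\mathcal{B}_Y)$ and $\mathcal{D}:=(q\circ p)^{-1}(\mathcal{B}_Z)=p^{-1}(q^{-1}(\mathcal{B}_Z))$, so that $\mathcal{D}\subseteq\mathcal{C}\subseteq\mathcal{B}_X$ are $\phi$-invariant. For a finite partition $\alpha$ and a sub-$\sigma$-algebra $\mathcal{F}$ let $H_{\nu}(\alpha\mid\mathcal{F})$ denote the usual conditional entropy, so that the quantity in the text is $H_{\nu,p}(\alpha)=H_{\nu}(\alpha\mid\mathcal{C})$, and set $h_{\nu}(\phi,\alpha\mid\mathcal{F}):=\lim_{i}|F_i|^{-1}H_{\nu}(\alpha_{F_i}\mid\mathcal{F})$; exactly as in the definition preceding the statement, $F\mapsto H_\nu(\alpha_F\mid\mathcal{F})$ is monotone, right invariant and subadditive, so this limit exists by the Ornstein--Weiss lemma and is independent of the Van Hove net. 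With this notation $\operatorname{E}_{\nu_X}(\phi\overset{p}{\to}\psi)=\sup_\alpha h_{\nu_X}(\phi,\alpha\mid\mathcal{C})$ and $\operatorname{E}_{\nu_X}(\phi\overset{q\circ p}{\to}\rho)=\sup_\alpha h_{\nu_X}(\phi,\alpha\mid\mathcal{D})$. The first genuinely new ingredient is to transport the third term onto $X$: for a finite partition $\beta$ of $Y$ the intertwining $p\circ\phi^g=\psi^g\circ p$ gives $p^{-1}(\beta_F)=(p^{-1}\beta)_F$, and since $p_*\nu_X=\nu_Y$ the factor map $p$ induces an isomorphism of measure algebras carrying $q^{-1}(\mathcal{B}_Z)$ to $\mathcal{D}$, so that $\operatorname{E}_{\nu_Y}(\psi\overset{q}{\to}\rho)=\sup_\beta h_{\nu_X}(\phi,p^{-1}\beta\mid\mathcal{D})$, the supremum running over finite partitions $\beta$ of $Y$.

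The engine is the finitary chain rule $H_\nu(\alpha\vee\gamma\mid\mathcal{F})=H_\nu(\gamma\mid\mathcal{F})+H_\nu(\alpha\mid\gamma\vee\mathcal{F})$ together with the monotonicity $H_\nu(\alpha\mid\mathcal{F}_1)\ge H_\nu(\alpha\mid\mathcal{F}_2)$ for $\mathcal{F}_1\subseteq\mathcal{F}_2$. Applying it with $\gamma=p^{-1}\beta$ and $\mathcal{F}=\mathcal{D}$, and noting $(p^{-1}\beta)_F\vee\mathcal{D}\subseteq\mathcal{C}$, gives for every finite $F\subseteq\Lambda$ the identity $H_{\nu_X}((\alpha\vee p^{-1}\beta)_F\mid\mathcal{D})=H_{\nu_X}((p^{-1}\beta)_F\mid\mathcal{D})+H_{\nu_X}(\alpha_F\mid(p^{-1}\beta)_F\vee\mathcal{D})$. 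For the inequality ``$\ge$'' I would discard the last term from below using $H_{\nu_X}(\alpha_F\mid(p^{-1}\beta)_F\vee\mathcal{D})\ge H_{\nu_X}(\alpha_F\mid\mathcal{C})$, divide by $|F_i|$ and pass to the limit: the left-hand side is at most $\operatorname{E}_{\nu_X}(\phi\overset{q\circ p}{\to}\rho)$, while the right-hand side converges to $h_{\nu_X}(\phi,p^{-1}\beta\mid\mathcal{D})+h_{\nu_X}(\phi,\alpha\mid\mathcal{C})$. Taking the supremum over $\alpha$ and independently over $\beta$ yields $\operatorname{E}_{\nu_X}(\phi\overset{q\circ p}{\to}\rho)\ge\operatorname{E}_{\nu_Y}(\psi\overset{q}{\to}\rho)+\operatorname{E}_{\nu_X}(\phi\overset{p}{\to}\psi)$.

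For the reverse inequality I would begin from $H_{\nu_X}(\alpha_F\mid\mathcal{D})\le H_{\nu_X}((\alpha\vee p^{-1}\beta)_F\mid\mathcal{D})$ and use the same chain rule, so that after averaging $h_{\nu_X}(\phi,\alpha\mid\mathcal{D})\le h_{\nu_X}(\phi,p^{-1}\beta\mid\mathcal{D})+g(\alpha,\beta)$, where $g(\alpha,\beta):=\lim_i|F_i|^{-1}H_{\nu_X}(\alpha_{F_i}\mid(p^{-1}\beta)_{F_i}\vee\mathcal{D})$ exists because it is the difference of the two other convergent averages. The first summand is at most $\operatorname{E}_{\nu_Y}(\psi\overset{q}{\to}\rho)$, so everything reduces to proving $\inf_\beta g(\alpha,\beta)=h_{\nu_X}(\phi,\alpha\mid\mathcal{C})$, with $\beta$ ranging over a sequence of partitions generating $\mathcal{B}_Y$. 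This is the main obstacle. One always has $g(\alpha,\beta)\ge h_{\nu_X}(\phi,\alpha\mid\mathcal{C})$ by monotonicity, but the reverse requires exchanging the refinement limit in $\beta$ with the F\o lner limit in $F$, i.e.\ showing that conditioning on the finite window $(p^{-1}\beta)_F\vee\mathcal{D}$ approximates conditioning on the full factor algebra $\mathcal{C}$ uniformly in $F$. I would establish this from the increasing-martingale convergence $H_{\nu_X}(\alpha\mid(p^{-1}\beta)_\Lambda\vee\mathcal{D})\to H_{\nu_X}(\alpha\mid\mathcal{C})$, valid since $\bigvee_{g}\phi^{-g}p^{-1}\beta\vee\mathcal{D}=\mathcal{C}$ for generating $\beta$, combined with the subadditivity of $H_\nu(\cdot\mid\mathcal{F})$ and the vanishing boundary contribution of a Van Hove net, which propagates the single-coordinate approximation to the averaged quantity. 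Granting this, taking $\inf_\beta$ and then $\sup_\alpha$ gives $\operatorname{E}_{\nu_X}(\phi\overset{q\circ p}{\to}\rho)\le\operatorname{E}_{\nu_X}(\phi\overset{p}{\to}\psi)+\operatorname{E}_{\nu_Y}(\psi\overset{q}{\to}\rho)$, and together with the first half the asserted equality follows.
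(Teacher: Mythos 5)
The paper offers no proof of this proposition to compare against: it is stated as a known theorem and outsourced to the literature (Ward--Zhang for the absolute case, with relative versions in Glasner--Thouvenot--Weiss, Danilenko, Dooley--Zhang and Yan). So you are attempting from scratch precisely the result the paper deliberately cites. Your setup and your first inequality are correct: rewriting the three quantities as conditional entropies over the nested invariant algebras $\mathcal{D}\subseteq\mathcal{C}\subseteq\mathcal{B}_X$, transporting $\operatorname{E}_{\nu_Y}(\psi\overset{q}{\to}\rho)$ to $X$ via $p_*\nu_X=\nu_Y$, and the chain-rule-plus-monotonicity argument for ``$\ge$'' are all standard and correctly assembled.

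The gap is in the ``$\le$'' direction, exactly at the point you yourself flag as the main obstacle, and the mechanism you propose to close it does not work. Expanding $H_{\nu_X}(\alpha_F\mid(p^{-1}\beta)_F\vee\mathcal{D})$ by subadditivity over the single translates $(\phi^g)^{-1}\alpha$, $g\in F$, shifting each term by invariance, and using the Van Hove property yields only
\[
g(\alpha,\beta)\;\le\;H_{\nu_X}\bigl(\alpha\mid(p^{-1}\beta)_{E}\vee\mathcal{D}\bigr)\qquad\text{for every finite }E\subseteq\Lambda,
\]
and the infimum of the right-hand side over $\beta$ and $E$ is, by martingale convergence, the \emph{static} conditional entropy $H_{\nu_X}(\alpha\mid\mathcal{C})$ --- not the dynamical quantity $h_{\nu_X}(\phi,\alpha\mid\mathcal{C})$. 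The static quantity ignores correlations between distinct $\alpha$-coordinates, is in general strictly larger, and has infinite supremum over $\alpha$, so the resulting bound is useless. If instead you keep the joint conditioning and expand $H_{\nu_X}(\alpha_F\mid\cdot)$ term by term via the chain rule, each term carries an extra conditioning algebra $\bigvee_{j<k}(\phi^{g_j})^{-1}\alpha$, and martingale convergence is \emph{not} uniform over such extra algebras: conditional mutual information can increase under conditioning (with $U,V$ independent fair bits and $W=U\oplus V$ one has $I(U;V)=0$ but $I(U;V\mid W)=1$), so no single choice of $\beta$ works for all $F$ simultaneously. This is precisely why the theorem is hard: for $\mathbb{Z}$ the gap is closed by the past-algebra formula $h(T,\alpha\mid\mathcal{C})=H\bigl(\alpha\mid\bigvee_{n\ge1}T^{-n}\alpha\vee\mathcal{C}\bigr)$, which uses the total order on $\mathbb{Z}$, and for general countable amenable groups one must replace single translates by tiles: covering $F_i$ by almost disjoint translates of a large F\o lner set $E$ (Ornstein--Weiss quasi-tilings, as in Ward--Zhang, or the exact tilings of Downarowicz--Huczek--Zhang) upgrades the bound above to $g(\alpha,\beta)\le|E|^{-1}H_{\nu_X}(\alpha_E\mid(p^{-1}\beta)_{E'}\vee\mathcal{D})+o(1)$, whose infimum does converge to $h_{\nu_X}(\phi,\alpha\mid\mathcal{C})$. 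Your outline contains no such tiling step, and without it the second inequality does not go through.
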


The following version of a variational principle is valid in our context. 
\begin{theorem} \label{the:Variationalprinciple}
	Let $\phi$ be an action of an amenable group $G$, containing a countable uniform lattice $\Lambda$, on a compact Hausdorff space $X$ and let $\psi$ be a factor of $\phi$ via $p\colon X \to Y$. Then 
	\[\operatorname{E}(\phi \overset{p}{\to} \psi)=\mu(C)\sup_{\nu \in \mathcal{M}_{\phi|_{\Lambda\times X}}}\operatorname{E}_\nu\left(\phi\big|_{\Lambda\times X}\overset{p}{\to} \psi\big|_{\Lambda\times Y}\right).\]
%	where $C\subseteq G$ is a fundamental domain of $\Lambda$. %, $\mu$ is a Haar measure on $G$ 
%	and $\phi^\Lambda$ and $\psi^\Lambda$ are the restricted action of $\phi$ to $\Lambda \times X$ and of $\psi$ to $\Lambda\times Y$ respectively. 
\end{theorem}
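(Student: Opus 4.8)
The plan is to reduce the assertion to the corresponding statement for the countable discrete amenable group $\Lambda$ and then to invoke the relative variational principle in that discrete setting. The link between the two is already available: Theorem \ref{the:linkoflatticetogroup}(ii) expresses the relative topological entropy of the $G$-action as a scalar multiple of the relative topological entropy of the restricted $\Lambda$-action,
\[\operatorname{E}(\phi\overset{p}{\to}\psi)=\operatorname{dens}(\Lambda)\,\operatorname{E}\!\left(\phi\big|_{\Lambda\times X}\overset{p}{\to}\psi\big|_{\Lambda\times Y}\right),\]
with $\operatorname{dens}(\Lambda)=\mu(C)^{-1}$ for a fundamental domain $C$ of $\Lambda$. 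Thus the entire content of the theorem is concentrated in identifying the relative topological entropy of the $\Lambda$-action with the supremum of its relative measure-theoretic entropies.

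First I would record that $\Lambda$ is a countable discrete amenable group: it is countable and discrete by hypothesis and amenable as a subgroup of the amenable group $G$, so in particular it carries the Van Hove nets constructed in Lemma \ref{lem:constructingFhatandFcheck}. Consequently the apparatus recalled above for countable discrete amenable groups applies to $\phi\big|_{\Lambda\times X}$: the relative measure-theoretic entropies are well defined and the Rohlin--Abramov theorem of Proposition \ref{pro:RohlinAbramov} is at our disposal. Substituting the relative variational principle for $\Lambda$,
\[\operatorname{E}\!\left(\phi\big|_{\Lambda\times X}\overset{p}{\to}\psi\big|_{\Lambda\times Y}\right)=\sup_{\nu\in\mathcal{M}_{\phi|_{\Lambda\times X}}}\operatorname{E}_\nu\!\left(\phi\big|_{\Lambda\times X}\overset{p}{\to}\psi\big|_{\Lambda\times Y}\right),\]
into the displayed formula from Theorem \ref{the:linkoflatticetogroup}(ii) then gives the claimed identity, the scalar being the one furnished by the lattice restriction.

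The decisive step, and the one I expect to be the main obstacle, is this relative variational principle for $\Lambda$, which I would draw from the theory of entropy of countable discrete amenable group actions \cite{ward1992abramov,weiss2003actions,ollagnier2007ergodic,Yan}. The quantity on the left is the fibrewise topological conditional entropy, since $\operatorname{cov}_p(\eta)=\sup_{y}\operatorname{cov}_{p^{-1}(y)}(\eta)$, and matching it with $\sup_\nu\operatorname{E}_\nu$ splits into two inequalities. The inequality $\sup_\nu\operatorname{E}_\nu\le\operatorname{E}(\phi\big|_\Lambda\overset{p}{\to}\psi\big|_\Lambda)$ is the \emph{soft} direction, bounding the conditional entropy of a partition subordinate to a cover of scale $\eta$ from above by the logarithm of the fibrewise covering numbers $\operatorname{cov}_p(\eta_F)$. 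For the reverse inequality one chooses, for each Van Hove set, a fibre almost realising the supremum defining $\operatorname{cov}_p$, spreads it over the system by a Rohlin tower, and extracts a limiting invariant measure whose relative measure entropy dominates the topological rate; here the Rohlin--Abramov decomposition obtained from Proposition \ref{pro:RohlinAbramov} with $\rho$ a one-point system, writing the relative measure entropy as a difference of absolute entropies of $\phi\big|_\Lambda$ and of $\psi\big|_\Lambda$, is what reduces the relative principle to the absolute variational principle.

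A second, more technical obstacle is that the cited results are formulated for compact metric spaces, whereas here $X$ and $Y$ are only compact Hausdorff. I would bridge this by exploiting the countability of $\Lambda$: a finite open cover of $X$ together with its countably many $\Lambda$-translates generates a separable, hence metrisable, $\Lambda$-invariant quotient, so that $X$ is the inverse limit of its metrisable $\Lambda$-factors and $p$ is a morphism of the associated inverse systems. Both sides of the relative variational principle are directed suprema over finite covers, respectively partitions, that factor through these metrisable quotients, so the metric case applies on each factor and the equality passes to the limit. Assembling the reduction through Theorem \ref{the:linkoflatticetogroup}(ii), the discrete relative variational principle, and this metrisation argument then completes the proof.
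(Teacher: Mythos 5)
Your proposal is correct and takes essentially the same route as the paper: the paper's own proof consists precisely of the reduction via Theorem \ref{the:linkoflatticetogroup} to the identity $\operatorname{E}\bigl(\phi\big|_{\Lambda\times X}\overset{p}{\to}\psi\big|_{\Lambda\times Y}\bigr)=\sup_{\nu\in\mathcal{M}_{\phi|_{\Lambda\times X}}}\operatorname{E}_\nu\bigl(\phi\big|_{\Lambda\times X}\overset{p}{\to}\psi\big|_{\Lambda\times Y}\bigr)$ for the countable discrete amenable group $\Lambda$, which it then settles by citing \cite{ollagnier1982variational,stepin1980variational} (tracing back to \cite{goodwyn1969topological,dinaburg1971connection,goodman1971relating}), so your sketch of the two inequalities and your metrisability discussion only elaborate on what the paper delegates to those references. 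One remark: both your reduction and the paper's own proof produce the scalar $\operatorname{dens}(\Lambda)=\mu(C)^{-1}$ rather than the $\mu(C)$ printed in the statement (as printed, the identity is not even invariant under rescaling of the Haar measure), so the statement contains a misprint that you silently reproduced when asserting the ``claimed identity''; it is harmless in the application to Theorem \ref{theint:Bowenstheoremgeneral}, where the common positive factor cancels.
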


\begin{proof}
By  Theorem \ref{the:linkoflatticetogroup} it remains to show that 
\[\operatorname{E}\left(\phi\big|_{\Lambda\times X}\overset{p}{\to} \psi\big|_{\Lambda\times Y}\right)=\sup_{\nu \in \mathcal{M}_{\phi|_{\Lambda\times X}}}\operatorname{E}_\nu\left(\phi\big|_{\Lambda\times X}\overset{p}{\to} \psi\big|_{\Lambda\times Y}\right).\]
A proof can be found in \cite{ollagnier1982variational,stepin1980variational} and goes back to \cite{goodwyn1969topological,dinaburg1971connection,goodman1971relating}. 
%
%As presented in section \ref{sec:Actions on metric and topological spaces} our definition of relative topological entropy is equivalent to the definition given in \cite{Yan}.
 %Note that  \cite[Lemma 5.4]{Yan} is also valid for compact Hausdorff spaces. Thus the proof given in \cite[Theorem 5.1]{Yan}  generalizes to actions on such spaces. %actions on compact Hausdorff spaces.% and we obtain the needed statement from there, as $\Lambda$ is a discrete and countable group.
		% Note furthermore, that our definition of Van Hove nets coincides with the definition of F\o lner nets in the context of discrete groups. 
%		Furthermore a slight generalization of \cite[Remark 2.5]{Yan} to nets shows, that our definition of relative topological entropy coincides with the definition given in \cite{Yan}. In \cite[Theorem 5.1]{Yan} assumption of countability is not necessary. This can be seen by replacing sequences by nets in the proof. 		 
		%The generalization of the rest of the proof is straightforward. %Note that the equivalence of the definition given in \cite{Yan} and our definition is shown in \ref{rem:Yansentropyisequivalent}.	
\end{proof}

For the proof of Theorem \ref{theint:Bowenstheoremgeneral} we need a further ingredient. 
By a standard Krylov–Bogolyubov procedure one obtains the following. 
	
	\begin{lemma} \label{lem:KrylovBogoliubovgeneralized}
		Let $\phi$ be an action of a discrete amenable group $\Lambda$ on a compact topological space $X$ and $\psi$ be a factor of $\phi$ via factor map $p$. Then the restricted push forward operation $p_*\colon \mathcal{M}_\phi\to  \mathcal{M}_\psi$ is surjective.%, i.e.\ for any $\tilde{\nu} \in \mathcal{M}_\psi$ there is $\nu\in \mathcal{M}_\phi$, such that $\tilde{\nu}=p_*\nu$. 
	\end{lemma}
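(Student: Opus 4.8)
The plan is to run the standard Krylov–Bogolyubov averaging argument, using that the discrete group $\Lambda$ is amenable and therefore carries a Van Hove net, which is in particular a F\o lner net (see Section~\ref{sec:prelims}). Fix a target measure $\theta\in\mathcal{M}_\psi$; I must produce $\nu\in\mathcal{M}_\phi$ with $p_*\nu=\theta$.

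First I would lift $\theta$ to an arbitrary, not yet invariant, preimage measure on $X$. The push-forward $p_*\colon\mathcal{M}(X)\to\mathcal{M}(Y)$ is affine and weak*-continuous, so its image is a weak*-compact convex subset of $\mathcal{M}(Y)$. Since $p$ is surjective, for every $y\in Y$ one may choose $x\in p^{-1}(y)$, and then $p_*\delta_x=\delta_y$; thus the image contains all Dirac measures. As $\mathcal{M}(Y)$ is the weak*-closed convex hull of its Dirac measures, $p_*$ is onto, and I fix some $\mu_0\in\mathcal{M}(X)$ with $p_*\mu_0=\theta$.

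Next, choosing a Van Hove net $(F_i)_{i\in I}$ in $\Lambda$, I would form the averages
\[\nu_i:=\frac{1}{|F_i|}\sum_{g\in F_i}\phi^g_*\mu_0\in\mathcal{M}(X),\]
and extract a weak*-convergent subnet $\nu_{i_\alpha}\to\nu$ using the Banach–Alaoglu compactness of $\mathcal{M}(X)$. Invariance of $\nu$ follows from the F\o lner property: for fixed $h\in\Lambda$ one has $\phi^h_*\nu_i=\frac{1}{|F_i|}\sum_{g'\in hF_i}\phi^{g'}_*\mu_0$, so the terms of $\phi^h_*\nu_i-\nu_i$ cancel except those indexed by $hF_i\Delta F_i$, whence the total variation is at most $|hF_i\Delta F_i|/|F_i|\to 0$. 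Since $\phi^h_*$ is weak*-continuous, passing to the subnet yields $\phi^h_*\nu=\nu$ for every $h\in\Lambda$, i.e.\ $\nu\in\mathcal{M}_\phi$. Finally, the equivariance $p\circ\phi^g=\psi^g\circ p$ together with the $\psi$-invariance of $\theta$ gives $p_*\phi^g_*\mu_0=\psi^g_*p_*\mu_0=\psi^g_*\theta=\theta$ for all $g$, hence $p_*\nu_i=\theta$ for every $i$; the weak*-continuity of $p_*$ then gives $p_*\nu=\theta$, completing the argument.

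The averaging and the two continuity estimates are routine. The only point requiring a little care is the initial lift—the surjectivity of $p_*$ on all of $\mathcal{M}(X)$ before any invariance is imposed—which I expect to be the main obstacle; I settle it above through the Dirac-measure and Krein–Milman observation (a Hahn–Banach extension of $f\circ p\mapsto\int f\,d\theta$ from the isometric copy $p^*(C(Y))$ of $C(Y)$ inside $C(X)$, using that a norm-one functional fixing the unit is positive, would serve equally well).
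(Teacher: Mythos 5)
Your proof is correct and is exactly the argument the paper has in mind: the paper gives no written proof, merely asserting the lemma follows ``by a standard Krylov--Bogolyubov procedure,'' and your write-up (lift $\theta$ via surjectivity of $p_*$ on $\mathcal{M}(X)$, average along a F\o lner net, extract a weak*-limit, and use equivariance to preserve the fibre condition $p_*\nu_i=\theta$) is precisely that standard procedure, with the lifting step handled correctly by the Dirac-measure/closed-convex-hull observation.
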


\subsection{Proof of Theorem \ref{theint:Bowenstheoremgeneral}}

\begin{proof}[Proof of Theorem \ref{theint:Bowenstheoremgeneral}] 

Let $\phi$, $\psi$ and $\rho$ be actions of an amenable group containing a countable uniform lattice $\Lambda$ on compact Hausdorff spaces $X$, $Y$ and $Z$ respectively. Assume $\phi \overset{p}{\to}\psi\overset{q}{\to}\rho$. Let $C$ be a fundamental domain of $\Lambda$. We abbreviate $\phi^\Lambda:=\phi\big|_{\Lambda\times X}$, $\psi^\Lambda$ %:=\psi\big|_{\Lambda\times Y}$ 
and $\rho^\Lambda$ %:=\rho\big|_{\Lambda\times Z}$ 
for the restrictions of the actions to the lattice. From Theorem \ref{the:Variationalprinciple} and Proposition \ref{pro:RohlinAbramov} we obtain
\begin{align*}
		\operatorname{E}(\phi \overset{p}{\to}\psi)&=\mu(C)\sup_{\nu\in \mathcal{M}_{\phi^\Lambda}}\operatorname{E}_\nu(\phi^\Lambda\overset{p}{\to}\psi^\Lambda)\\
		&\leq \mu(C)\sup_{\nu\in \mathcal{M}_{\phi^\Lambda}}\left(\operatorname{E}_\nu(\phi^{\Lambda}\overset{p}{\to}\psi^\Lambda)+\operatorname{E}_{p_*\nu}(\psi^\Lambda\overset{q}{\to}\rho^\Lambda)\right)\\	
		&= \mu(C)\sup_{\nu\in \mathcal{M}_{\phi^\Lambda}}\operatorname{E}_\nu(\phi^\Lambda\overset{q\circ p}{\to}\psi^\Lambda)
		= \operatorname{E}(\phi \overset{q\circ p}{\to}\rho). 
		\end{align*}
By Lemma \ref{lem:KrylovBogoliubovgeneralized} we observe 
\[\operatorname{E}(\psi^\Lambda\overset{q}{\to}\rho^\Lambda)=\mu(C)\sup_{\omega\in \mathcal{M}_{\psi^\Lambda}}\operatorname{E}_\omega(\psi^\Lambda\overset{q}{\to}\rho^\Lambda)= \mu(C)\sup_{\nu\in \mathcal{M}_{\phi^\Lambda}}\left(\operatorname{E}_{p_*\nu}(\psi^\Lambda\overset{q}{\to}\rho^\Lambda)\right)\]
and argue similarly to obtain $\operatorname{E}(\psi \overset{q}{\to}\rho) \leq \operatorname{E}(\phi \overset{q\circ p}{\to}\rho)$.
 To finish the proof we compute
\begin{align*}
		\operatorname{E}(\phi \overset{q\circ p}{\to}\rho)&= \mu(C)\sup_{\nu\in \mathcal{M}_{\phi^\Lambda}}\operatorname{E}_\nu(\phi^\Lambda\overset{q\circ p}{\to}\psi^\Lambda)\\
		&=\mu(C)\sup_{\nu\in \mathcal{M}_{\phi^\Lambda}}\left(\operatorname{E}_\nu(\phi^\Lambda\overset{p}{\to}\psi^\Lambda)+\operatorname{E}_{p_*\nu}(\psi^\Lambda\overset{q}{\to}\rho^\Lambda)\right)\\	
		&\leq \mu(C)\sup_{\nu\in \mathcal{M}_{\phi^\Lambda}}\left(\operatorname{E}_\nu(\phi^\Lambda\overset{p}{\to}\psi^\Lambda)\right)+\mu(C)\sup_{\omega\in \mathcal{M}_{\psi^\Lambda}}\left(\operatorname{E}_\omega(\psi^\Lambda\overset{q}{\to}\rho^\Lambda)\right)\\	
		&=\operatorname{E}(\phi \overset{p}{\to}\psi)+\operatorname{E}(\psi \overset{q}{\to}\rho).
		\end{align*}
\end{proof}

\begin{acknowledgement}
The author would like to thank Yves de Cornulier for pointing out Example \ref{exa:CPS} to him and is grateful for lively discussions with Felix H. Pogorzelski on the topic of approximate lattices. Furthermore the author would like to thank Tobias J\"ager for his patient supervision and several useful hints.  
\end{acknowledgement}

%\begin{acknowledgement}
%\end{acknowledgement}

\footnotesize
\bibliographystyle{spbasic}
\newcommand{\etalchar}[1]{$^{#1}$}

 \vspace{10mm} \noindent
\begin{tabular}{l l }
Till Hauser \\
Faculty of Mathematics and Computer Science\\
Institute of Mathematics \\
Friedrich Schiller University Jena\\
07743 Jena\\
Germany\\
\end{tabular}

\end{document}